\tikzset{
  main/.style={circle, minimum size = 30pt, thick, draw = black!80, node distance = 10mm},
  connect/.style={-latex, thick},
  box/.style={rectangle, draw = white!100}
}
\numberwithin{equation}{section}
\let\OLDthebibliography\thebibliography
\renewcommand\thebibliography[1]{
  \OLDthebibliography{#1}
  \setlength{\parskip}{0pt}
  \setlength{\itemsep}{2pt plus 0.5ex}
}
\def\@cite#1#2{{\m@th\upshape\bfseries%
[{#1\if@tempswa{\m@th\upshape\mdseries, #2}\fi}]}}
\theoremstyle{plain}
\newtheorem{theorem}{Theorem}[section]
\newtheorem{corollary}[theorem]{Corollary}
\newtheorem{proposition}[theorem]{Proposition}
\newtheorem{lemma}[theorem]{Lemma}
\theoremstyle{definition}
\newtheorem{definition}[theorem]{Definition}
\newtheorem{example}[theorem]{Example}
\newtheorem{remark}[theorem]{Remark}
\newtheorem*{open}{Open Access Statement}
\theoremstyle{remark}
  \newcommand{\A}{{\mathcal{A}}}
  \newcommand{\B}{{\mathcal{B}}}
  \newcommand{\C}{{\mathcal{C}}}
  \newcommand{\F}{{\mathcal{F}}}
  \newcommand{\G}{{\mathcal{G}}}
  \newcommand{\K}{{\mathcal{K}}}
\renewcommand{\L}{{\mathcal{L}}}
\renewcommand{\O}{{\mathcal{O}}}
\renewcommand{\S}{{\mathcal{S}}}
  \newcommand{\T}{{\mathcal{T}}}
\newcommand{\eps}{\varepsilon}
\def\al{\alpha}
\def\be{\beta}
\def\de{\delta}
\def\io{\iota}
\def\ka{\kappa}
\def\la{\lambda}
\def\si{\sigma}
\newcommand\vphi{\varphi}
\newcommand{\bC}{\mathbb{C}}
\newcommand{\bN}{\mathbb{N}}
\newcommand{\bT}{\mathbb{T}}
\newcommand{\bZ}{\mathbb{Z}}
\newcommand{\fA}{{\mathfrak{A}}}
\newcommand{\fS}{{\mathfrak{S}}}
\newcommand{\fT}{{\mathfrak{T}}}
\newcommand{\fX}{{\mathfrak{X}}}
\newcommand{\fY}{{\mathfrak{Y}}}
\newcommand{\foral}{\text{ for all }}
\newcommand{\qand}{\quad\text{and}\quad}
\newcommand{\ca}{\mathrm{C}^*}
\newcommand{\cenv}{\mathrm{C}^*_{\textup{env}}}
\newcommand{\ol}{\overline}
\newcommand{\wt}{\widetilde}
\newcommand{\wh}{\widehat}
\newcommand{\alg}{\operatorname{alg}}
\newcommand{\id}{{\operatorname{id}}}
\newcommand{\spn}{\operatorname{span}}
\newcommand{\sca}[1]{\left\langle#1\right\rangle} 
\newcommand{\nor}[1]{\left\Vert #1\right\Vert} 
\newcommand{\tes}[7]{
	\xymatrix@C=2cm@R=1.5cm{
		K_0\left(#1\right) \ar[r]^{#2} & K_0\left(#3\right) \ar[r]^{#4} & K_0\left(#5\right) \ar[d] \\
		K_1\left(#5\right) \ar[u] & K_1\left(#3\right) \ar[l]^{#7} & K_1\left(#1\right) \ar[l]^{#6}
	}
}
\begin{document}

\title[Hyperrigidity and non-degenerate C*-correspondences]{On hyperrigidity and non-degenerate C*-correspondences}

\author[J.A. Dessi]{Joseph A. Dessi}
\address{School of Mathematics, Statistics and Physics\\ Newcastle University\\ Newcastle upon Tyne\\ NE1 7RU\\ UK}
\email{joseph.dessi@newcastle.ac.uk}

\author[E.T.A. Kakariadis]{Evgenios T.A. Kakariadis}
\address{School of Mathematics, Statistics and Physics\\ Newcastle University\\ Newcastle upon Tyne\\ NE1 7RU\\ UK}
\email{evgenios.kakariadis@newcastle.ac.uk}

\author[I.A. Paraskevas]{Ioannis Apollon Paraskevas}
\address{Department of Mathematics\\ National and Kapodistrian University of Athens\\ Athens\\ 1578 84\\ Greece}
\email{ioparask@math.uoa.gr}

\thanks{2010 {\it  Mathematics Subject Classification.} 46L08, 47L55, 46L05}

\thanks{{\it Key words and phrases:} C*-correspondences, tensor algebras, C*-envelope, hyperrigidity.}

\begin{abstract}
We revisit the results of Kim, and of Katsoulis and Ramsey concerning hyperrigidity for non-degenerate C*-correspondences.
We show that the tensor algebra is hyperrigid, if and only if Katsura's ideal acts non-degenerately, if and only if Katsura's ideal acts non-degenerately under any representation.
This gives a positive answer to the question of Katsoulis and Ramsey, showing that their necessary condition and their sufficient condition for hyperrigidity of the tensor algebra are equivalent.
Non-degeneracy of the left action of Katsura's ideal was also shown by Kim to be equivalent to hyperrigidity for the selfadjoint operator space associated with the C*-correspondence, and our approach provides a simplified proof of this result as well.
In the process we study unitisations of selfadjoint operator spaces in the sense of Werner, and revisit Arveson's criterion connecting maximality with the unique extension property and hyperrigidity, in conjunction with the work of Salomon on generating sets.
\end{abstract}

\maketitle

\section{Introduction}

\subsection{C*-envelope and hyperrigidity}

The C*-envelope of an operator system $\fS$ is the co-universal C*-algebra generated by $\fS$, up to unital complete order embeddings.
Its existence was conjectured by Arveson \cite{Arv69}, and it was proven in full generality by Hamana \cite{Ham79}.
Hamana's conceptual proof passes through the existence of the injective envelope and does not use Dilation Theory.
Almost 30 years later, Dritschel and McCullough \cite{DM05} provided an independent proof of the existence of the C*-envelope for (unital) operator algebras through maximal representations.
The C*-envelope is the C*-algebra generated by a maximal completely isometric representation, i.e., a representation that admits only trivial dilations.

A similar co-universal object has been proven to exist for unital operator spaces, operator systems \cite{Arv08}, possibly non-unital operator algebras \cite{DS18}, and (concrete) selfadjoint operator spaces (i.e., selfadjoint norm-closed subspaces of some $\B(H))$ \cite{CS21, KKM23}, albeit with the right notion of morphisms.
Although the absence of unit for operator algebras can be treated effectively through Meyer's Unitisation Theorem, this is not the case for selfadjoint operator spaces.
Connes and van Suijlekom \cite[Theorem 2.25]{CS21} show that there exists a C*-envelope of a selfadjoint operator space with respect to embeddings, i.e., completely isometric complete order embeddings whose unitisation in the sense of \cite{Wer02} remains completely isometric. 
In \cite[Remark 2.27]{CS21} it is pointed out that embeddings are the right notion of morphisms in order to obtain the existence of a C*-envelope in this category.

Not all $*$-representations of the C*-envelope restrict to maximal representations.
Arveson \cite{Arv11} connected this property to hyperrigidity, a notion that reflects the asymptotic rigidity a $*$-representation may have on the generating set.
In the same work, Arveson provided a comprehensive list of examples of hyperrigid sets.
Duncan \cite{Dun08, Dun10} studied hyperrigidity for big classes of operator algebras coming from dynamical systems or graphs and how it compiles with free products.
A similar study was extended to tensor algebras of C*-correspondences by the second named author \cite{Kak13}.
Dor-On and Salomon \cite{DS18} gave a complete identification for representations of graph tensor algebras to be maximal, while Salomon \cite{Sal17} carefully exhibited rigidity of the zero map with applications to generating (edge) sets of graph C*-algebras.

The full case of non-degenerate C*-correspondences was considered by Katsoulis and Ramsey \cite{KR20, KR21}, and by Kim \cite{Kim21}.
This class encapsulates operator spaces arising from topological graphs and from non-degenerate $*$-endomorphisms.
In \cite{KR21}, as it first appeared on the arXiv, Katsoulis and Ramsey identify non-degeneracy of the left action of Katsura's ideal as the key link with hyperrigidity of the associated tensor algebra, and they show it is sufficient when the C*-correspondence $X$ is countably generated. 
Subsequently, in \cite{Kim21} Kim shows that this condition (without the countably generated assumption on $X$) is not only sufficient, but is also necessary for hyperrigidity of the selfadjoint operator space related to $X$. 
(Note that the published version of \cite{KR21} contains a slight modification of the original argument that makes the proof work for any non-degenerate $X$.)

Our main point of motivation for this work comes from a necessary condition for hyperrigidity identified in \cite{KR20}, concerning a seemingly weaker notion of non-degeneracy, ranging over all representations of the coefficient algebra of $X$.
That this coincides with non-degeneracy of the action of Katsura's ideal for topological graphs was shown to hold when the range map is open in \cite{Kim21}, and without any assumption on the topological graph in \cite{KR20}.
Consequently, Katsoulis and Ramsey \cite{KR20} ask if this is true for \emph{all} non-degenerate C*-correspondences.

There is strong evidence for linking hyperrigidity with weak topological properties.
By comparing the support of the Katsura's ideal with the orthogonal complement of the support of the kernel of the left action in the second dual of the coefficient algebra, Bilich \cite{Bil24} identifies exactly when a representation of the tensor algebra is maximal, recovering the characterisation of hyperrigidity.
This fits in a broader programme around Arveson's hyperrigidity conjecture that has seen a recent boost. 
The conjecture asks whether hyperrigidity amounts to checking maximality just for the boundary representations (i.e., restrictions of irreducible $*$-representations).
Arveson's conjecture was entirely open until recently, where a noncommutative counterexample was provided by Bilich and Dor-On \cite{BD24}.
It remains open in the commutative setting. 
Clou{\^a}tre and Thompson \cite{CT24} further investigated the hyperrigidity conjecture and showed that a modified version holds; in this version the unique extension property is restricted within the weak*-closure of the range of the $*$-representations.
The link of hyperrigidity with the second dual was also identified by Clou{\^a}tre and Saikia \cite{CS23}. 
Motivated by the dilation order established by Davidson and Kennedy \cite{DK24}, the authors in \cite{CS23} prove the existence of a boundary projection in the second dual of the generating C*-algebra that concentrates all dilation maximal states, and whose topological regularity is equivalent to hyperrigidity. 
These works provide strong connections between hyperrigidity and the weak*-setting, indicating that the proposed equivalence by the weak notion of non-degeneracy by Katsoulis and Ramsey \cite{KR20} fits naturally in this context.

\subsection{Non-degenerate C*-correspondences revisited}

In this work we settle the question of Katsoulis and Ramsey \cite{KR20} to the affirmative, as well as obtain an equivalence between the results of \cite{Kim21} and \cite{KR20, KR21} that we hope completes their connections.
Given a C*-correspondence $X$ over a C*-algebra $A$, we write $\T_X$ for the Toeplitz-Pimsner algebra and $\O_X$ for the Cuntz-Pimsner algebra of $X$ with the canonical $*$-epimorphism
\[
\pi_X \times t_X \colon \T_X \to \O_X.
\]
Katsura's ideal is denoted by $J_X$, the left action of $A$ on $X$ is denoted by $\vphi_X$, and the C*-algebra of compact operators on $X$ is denoted by $\K(X)$.
We write $\fS(A,X)$ for the selfadjoint operator space generated by $A$ and $X$ in $\T_X$, and we write $\T_X^+$ for the operator algebra generated by $A$ and $X$ in $\T_X$.
In Theorem \ref{T:hyp} it is summarised that, if $X$ is non-degenerate, then the following items are equivalent:
\begin{enumerate}
\item $\pi_X(A) \cup t_X(X)\subseteq \O_X$ is hyperrigid.
\item $(\pi_X \times t_X)(\fS(A,X)) \subseteq \O_X$ is hyperrigid.
\item $(\pi_X \times t_X)(\T_X^+) \subseteq \O_X$ is hyperrigid.
\item $[\pi(J_X)H] = [\psi_t(\K(X))H]$ for every Cuntz-Pimsner covariant representation $(H,\pi,t)$ of $X$ with $(H,\pi)$ being non-degenerate.
\item $[t(\vphi_X(J_X)X)H] = [t(X)H]$ for every representation $(H,\pi,t)$ of $X$ with $(H,\pi)$ being non-degenerate.
\item $[\vphi_X(J_X)X] \otimes_\si H = X \otimes_\si H$ for every non-degenerate representation $(H,\si)$ of $A$.
\item $[\vphi_X(J_X) \K(X)]  =\K(X)$.
\item $[\vphi_X(J_X) X] = X$.
\item $\vphi_X(J_X) X = X$.
\end{enumerate}
The implication [(vi) $\Rightarrow$ (viii)] settles the question raised in \cite{KR20}.
Our main contribution is the proof of [(iv) $\Leftrightarrow$ (v) $\Leftrightarrow$ (vi) $\Rightarrow$ (vii) $\Leftrightarrow$ (viii)].

In the process of completing the argument we need to revisit some of the proofs of existing results, and let us clarify here the corresponding links.
The implication [(iii) $\Rightarrow$ (vi)] is proven in \cite[Theorem 2.7]{KR20}, and the implication [(viii) $\Rightarrow$ (iii)] is proven in \cite[Theorem 3.1]{KR21}.
We note that the equivalence [(ii) $\Leftrightarrow$ (viii)] is proven in \cite[Theorem 3.5]{Kim21}; however, herein we get this by showing the equivalences [(i) $\Leftrightarrow$ (ii) $\Leftrightarrow$ (iii) $\Leftrightarrow$ (viii)] using that $\fS(A,X)$ and $\T_X^+$ have the same set of generators and that the representations of $\T_X^+$ satisfy a specific invariance property noted in \cite{KK12}.
Moreover, a general C*-result is used for the equivalence [(iv) $\Leftrightarrow$ (vii)], thus avoiding the ultraproduct argument in \cite[proof of Theorem 3.5]{Kim21}, see Proposition \ref{P:multiplier}.

In passing, in Proposition \ref{P:cenv S(A,X)} we clarify that the map $\fS(A,X) \hookrightarrow \O_X$ is an embedding in the theory of selfadjoint operator spaces; this fills a gap in the proof of \cite[Proposition 2.6]{Kim21} and ascertains that $\O_X$ is the C*-envelope of $\fS(A,X)$ in the sense of \cite{CS21, KKM23}.
This is a subtle point, as C*-inclusions of selfadjoint operator spaces may not be embeddings. 
In particular building on \cite[Example 2.14]{KKM23} and \cite[Example 4.1]{Rus23}, in Example \ref{E:not incl} we show that there exists an inclusion of a selfadjoint operator space that is not an embedding.
In Corollary \ref{C:ext} we provide a characterisation for an inclusion to be an embedding connecting it with the extensions of completely contractive completely positive functionals. 
In Proposition \ref{P:ext} we show that for a class of selfadjoint operator spaces that contain a C*-algebra and an approximate unit any inclusion is automatically an embedding. This class of examples contains stabilisations of selfadjoint operator spaces in the sense of \cite{CS21} and also selfadjoint operator spaces arising from non-degenerate C*-correspondences.

\subsection{Hyperrigidity and maximality revisited}

Arveson's maximality criterion states that an operator system $\fS$ is hyperrigid if and only if the restriction of any unital $*$-representation of its C*-envelope to $\fS$ is maximal, thus making detection of hyperrigidity amenable to $*$-algebraic manipulations.
This approach has been used with much success in several settings and extends directly to generating sets when they contain the unit.
However moving beyond that point requires more care as there may be non-zero unital completely positive maps annihilating a generating set.
Such maps are shown to exist in \cite[Example 3.4]{Sal17} for the set of generators $\{S_1, \dots, S_d\}$ of the Cuntz algebra $\O_d$.
More generally, in his influential work Salomon \cite{Sal17} shows that this is the case for edge sets in C*-algebras of row-finite graphs.

In order to encounter this phenomenon, Salomon \cite{Sal17} carefully studied the compatibility of hyperrigidity under direct limits, and reduced the problem to two main classes: when $\ca(\G)$ is unital and when the zero map on $\G$ has a unique extension (i.e., it is \emph{rigid}).
When $\ca(\G)$ is unital, in \cite[Proposition 2.1]{Sal17} it is shown that the non-degeneracy clause in the definition of hyperrigidity allows to connect it still with maximality by adding the unit to the generating set; this is the case for $\{S_1, \dots, S_d\}$ in $\O_d$. 
Hyperrigidity of a generating set $\G$ with $\ca(\G)$ being unital is also studied in \cite{PS25}.
On the other hand, in \cite[Theorem 3.3]{Sal17} it is shown that rigidity of the zero map is equivalent to the states of $\ca(\G)$ not vanishing on $\G$, and from there the connection between hyperrigidity and maximality is achieved.
We refer to this property as $\G$ being \emph{separating}.
This is the case for non-degenerate C*-correspondences where $\pi_X(A) \cup t_X(X)\subseteq \O_X$ is a separating generating set. Moreover, in Remark \ref{R:septensor} we show that if $\pi_X(A) \cup t_X(X)$ is hyperrigid in $\O_X$, then $X$ is non-degenerate if and only if $\pi_X(A) \cup t_X(X)$ is separating.

The connection between hyperrigidity and maximality in \cite{Sal17} makes use of \cite[Lemma 2.7, item (ii)]{Sal17} which refers to a general $\G$; however in Remark \ref{R:Salunit} we show that this argument applies only in the unital context.
Nevertheless, the statement is still valid when $\G$ is separating, and we provide a proof to this end.
It is unclear to us whether \cite[Lemma 2.7, item (ii)]{Sal17} holds when $\ca(\G)$ is not unital and $\G$ is not separating (there is an abundance of such hyperrigid sets exhibited in \cite{Sal17}).
In Theorem \ref{T:hypiffuepun} and Theorem \ref{T:hypiffuep} it is summarised that, if $\ca(\G)$ is unital or if $\G$ is separating in $\ca(\G)$, then the following are equivalent:
\begin{enumerate}
\item $\G$ is hyperrigid.
\item Every non-degenerate $*$-representation $\Phi$ of $\ca(\G)$ has the unique extension property with respect to $\G$.
\item Every non-degenerate $*$-representation $\Phi$ of $\ca(\G)$ is maximal on $\G$.
\end{enumerate}

When $\G$ is separating the statements (ii) and (iii) above are still equivalent to $\G$ being hyperrigid without the non-degeneracy clause.
Conversely, removing the non-degeneracy clause a priori imposes that $\G$ is separating.
Therefore we insist on working within the class of non-degenerate $*$-representations; this has an effect on the unique extension property variant we use in the non-unital context which amounts for different (yet still non-degenerate) $*$-representations on the domain.
Aligning with \cite{Sal17}, in Proposition \ref{P:unit ccp/ucp} we verify that $\G$ is separating and hyperrigid if and only if the augmented set in the unitisation by the unit is hyperrigid, when $\ca(\G)$ is not unital.
As shown in \cite{Sal17} there are hyperrigid non-separating sets $\G$ with non-unital $\ca(\G)$.
Finally, we study a variant of hyperrigidity with respect to completely contractive completely positive maps, motivated by the setting of \cite[Lemma 2.7]{Sal17}.
In Corollary \ref{C:ccp/ucp} we show that it coincides with Arveson's hyperrigidity, without any assumptions on $\G$.

\subsection{Contents}

In this work we revise and update existing results from the literature, providing corrections and modifications where needed. 
We will be crediting the original papers, even if the proofs are corrected or completed here.
The structure of the manuscript is as follows.

In Section \ref{S:pre} we record the main definitions from the Operator Spaces theory, and we provide a criterion for an inclusion of a class of selfadjoint operator spaces to be an embedding.
This is required in order to complete the proof of Kim \cite[Proposition 2.6]{Kim21} that $\O_X$ is a C*-extension in the sense of Connes and van Suijlekom \cite{CS21}, and thus the C*-envelope of $\fS(A,X)$.
Building on \cite[Example 2.14]{KKM23} and \cite[Example 4.1]{Rus23} we provide an example of an inclusion of a selfadjoint operator space inside the C*-algebra it generates that is \emph{not} an embedding. Together with Chatzinikolaou, we will expand on embeddings of selfadjoint operator spaces in a future work \cite{CDKP24}.

In Section \ref{S:hyper} we study the connection of hyperrigidity with maximality when $\ca(\G)$ is unital or when the generating set $\G$ is separating.
We complete the argument of \cite[Lemma 2.7, item (ii)]{Sal17} for the separating case in Theorem \ref{T:hypiffuep}.
In Remark \ref{R:Salunit} we show why the strategy outlined for \cite[Lemma 2.7, item (ii)]{Sal17} does not apply in the non-unital context.
This relies on Proposition \ref{P:multiplier}, where we show that if $\A \subseteq \B \subseteq \C$ are C*-algebras, then $[\A \B] = \B$ if and only if $[\Phi(\A)H] = [\Phi(\B)H]$ for every $*$-representation $\Phi \colon \C \to \B(H)$.
Throughout the section we comment on the equivalence between the ucp and the ccp variants of hyperrigidity and the unique extension property.
This relies on Proposition \ref{P:Dil} where we apply Stinespring's Dilation Theorem on Werner's unitisation when the map and/or the C*-algebra is not unital.

In Section \ref{S:cor} we answer the question of \cite{KR20} and provide further equivalences for hyperrigidity, as well as the equivalence with the results of \cite{Kim21}.
We note that the case of a separating $\G$ is exactly where the class of non-degenerate C*-correspondences sits. 
Conversely, in Remark \ref{R:septensor} we point out that a C*-correspondence with hyperrigid and separating tensor algebra is automatically non-degenerate.
\subsection*{Acknowledgements}
Evgenios Kakariadis and Apollonas Paraskevas acknowledge that this research work was supported within the framework of the National Recovery and Resilience Plan Greece 2.0, funded by the European Union - NextGenerationEU (Implementation Body: HFRI. Project name: Noncommutative Analysis: Operator Systems and Nonlocality.
HFRI Project Number: 015825).
Apollonas Paraskevas acknowledges that this research work was supported by the Hellenic Foundation
for Research and Innovation (HFRI) under the 5th Call for HFRI PhD Fellowships (Fellowship Number: 19145).

The authors would like to thank Se-Jin (Sam) Kim and Guy Salomon for their helpful comments and remarks on the draft of the manuscript. The authors would like to thank the referee for the comments and remarks that helped improve the presentation.

\begin{open}
For the purpose of open access, the authors have applied a Creative Commons Attribution (CC BY) license to any Author Accepted Manuscript (AAM) version arising.
\end{open}
\section{Preliminaries} \label{S:pre}

\subsection{Operator Spaces}

If $X$ is a subset of a Banach space $Y$, then we write $[X]$ for the closed linear span of $X$ in $Y$.
We will reserve the letters, $H, K, L$ etc.\ to denote Hilbert spaces.

By an operator space $\fX$ we will mean a norm-closed subspace of some $\B(H)$.
By an operator algebra $\fA$ we will mean a norm-closed subalgebra of some $\B(H)$.
Every operator algebra attains a C*-cover, i.e., a completely isometric homomorphism $\io \colon \fA \to \C$ such that $\C = \ca(\io(\fA))$.
The \emph{C*-envelope} $\cenv(\fA)$ is the co-universal C*-cover of $\fA$, i.e., there exists a completely isometric homomorphism $\io \colon \fA \to \cenv(\fA)$ such that for any other C*-cover $j \colon \fA \to \C$ there exists a unique $*$-epimorphism $\Phi \colon \C \to \cenv(\fA)$ such that $\Phi \circ j = \io$.
The existence of the C*-envelope for a unital operator algebra is established through its injective envelope, while the C*-envelope for a non-unital operator algebra is the C*-algebra it generates in the C*-envelope of its unitisation.
Note that $\fA$ is unital if and only if there is a completely isometric homomorphism $\phi \colon \fA \to \B(H)$ with $I_H \in [\phi(\fA)]$.
The reader is addressed to \cite{BL04, Pau02} for the full details.

By an operator system $\fS$ we will mean a unital selfadjoint norm-closed subspace of some $\B(H)$.
The notion of the C*-envelope passes to this category in a natural way, up to unital completely isometric maps.
A linear map $\phi\colon \fS\to \fT$ where $\fS$ and $\fT$ are operator systems is called a (unital) \emph{complete order isomorphism} if it is invertible and both $\phi$ and $\phi^{-1}$ are (unital) completely positive. 
We also say that $\phi \colon \fS \to \fT$ is a \emph{complete order embedding} if it is a complete order isomorphism onto its image. 
Note that, if $\phi$ is unital, then it is a complete isometry if and only if it is a complete order embedding. 
We also have that, if a linear map between operator systems is a completely isometric complete order isomorphism, then it is necessarily unital.

The existence of the C*-envelope of an operator system was established by Hamana \cite{Ham79} through the existence of the injective envelope.
An independent proof for unital operator algebras was established by Dritschel and McCullough \cite{DM05} through the existence of maximal dilations.
Recall that a map $\phi \colon \fX \to \B(K)$ on an operator space $\fX$ is called a \emph{dilation} of a map $\io \colon \fX \to \B(H)$ if $H \subseteq K$ and $\io = P_H \phi|_H$.
A map is called \emph{maximal} if it only attains trivial dilations, i.e., dilations by orthogonal summands.
Dritschel and McCullough \cite{DM05} show that every unital completely contractive homomorphism of a unital operator algebra $\fA$ admits a dilation that is a maximal completely contractive homomorphism; then the C*-envelope of $\fA$ is the C*-algebra generated by a unital completely isometric maximal representation.
A simplified proof by Arveson \cite{Arv08} yields the same result for operator systems. 

The same result on maximal dilations for non-unital operator algebras follows by combining \cite[Proposition 2.4]{DS18} of Dor-On and Salomon with \cite[Proposition 4.8]{Sal17} of Salomon.
One of the important steps in this respect is Meyer's Unitisation Theorem which asserts that for an operator algebra $\fA \subseteq \B(L)$ with $I_L \notin \fA$ and a completely contractive (resp.\ completely isometric) homomorphism $\phi \colon \fA \to \B(H)$, the (unique) unital extension of $\phi$ on $\fA +\bC I_L$ is a completely contractive (resp.\ completely isometric) homomorphism.
Thus the unitisation is unique up to completely isometric representations.

One of the pivotal steps in Arveson's work \cite{Arv08} is to connect maximality with the unique extension property of a map.
A completely contractive map $\phi \colon \fX \to \B(H)$ on an operator space $\fX \subseteq \ca(\fX)$ is said to have \emph{the unique extension property} if:
\begin{enumerate}
\item $\phi$ has a unique completely contractive completely positive extension $\wt{\phi} \colon \ca(\fX) \to \B(H)$, and
\item $\wt{\phi}$ is a $*$-representation.
\end{enumerate}
Arveson \cite{Arv08} shows that a unital completely positive map on an operator system has the unique extension property if and only if it is maximal.
The same holds for a unital completely contractive representation on a unital operator algebra.
By using Meyer's Unitisation Theorem, Dor-On and Salomon \cite{DS18} show that the same holds for any operator algebra.

\subsection{Selfadjoint Operator Spaces}

We next present some elements of the theory of selfadjoint operator spaces in the sense of \cite{CS21, KKM23, Wer02}. These spaces also appear as (non-unital) operator systems in the literature. 
However, in \cite{Wer02} operator systems refer to matrix ordered operator spaces that are completely order embedded into some $\B(H)$, in contrast to \cite{CS21, KKM23} where the complete order embedding is also assumed to be completely isometric. 
Since we will restrict ourselves to the concrete case and we want to reflect more closely the possible lack of the unit we adopt the following terminology: a \emph{selfadjoint operator space} will be a selfadjoint norm-closed subspace of some $\B(H)$.

Werner \cite[Definition 4.7]{Wer02} considers the following notion of unitisation for a selfadjoint operator space $\fS \subseteq \B(H)$.
Consider the space $\fS^{\#} := \fS \oplus \bC$ with an involution given by $(s,\al)^*=(s^*,\ol{\al})$.
Declare a selfadjoint element $(s,\al) \in M_n(\fS^{\#})$ to be \emph{positive} if $\al \geq 0$ and 
\[
\vphi(\al_\varepsilon^{-1/2}s\al_\varepsilon^{-1/2})\geq -1 \foral \eps>0 \text{ and } \vphi \in {\rm CCP}(M_n(\fS), \bC),
\]
where $\al_\varepsilon:=\al+\varepsilon I_{M_n}$.
In \cite[Lemma 4.8]{Wer02} Werner shows that $\fS^{\#}$ equipped with this matrix ordering is an operator system, and that the inclusion map $\fS \hookrightarrow \fS^{\#}$ is completely isometric and a complete order embedding.
It follows that Werner's unitisation of a C*-algebra (as a selfadjoint operator space) coincides with the usual unitisation \cite[Corollary 4.17]{Wer02}.

A map $\phi \colon \fS \to \fT$ between selfadjoint operator spaces $\fS$ and $\fT$ is called \emph{positive} if it is selfadjoint and maps positive elements to positive elements, as the cones do not necessarily span the spaces.
Such a map $\phi \colon \fS \to \fT$ is called a \emph{complete order embedding} if it is completely positive and invertible onto its image with a completely positive inverse. 
If such a $\phi$ is also surjective we say that it is a \emph{complete order isomorphism}.
Whenever $\phi$ is a completely isometric complete order isomorphism we have that $\phi^\#$ is a unital complete order isomorphism of operator systems, by \cite[Corollary 2.14]{CS21}. 
We say that $\phi\colon \fS \to \fT$ is an \emph{embedding} if it is a completely isometric complete order embedding, and the unitisation map
\[
\phi^\#\colon \fS^\#\to\fT^\#; (s,\la)\mapsto (\phi(s),\la)
\]
is a complete order embedding of operator systems.
More generally, by \cite[Lemma 4.9]{Wer02}, we have that if $\phi\colon \fS \to \fT$ is a completely contractive completely positive map, then the natural extension $\phi^{\#} \colon \fS^{\#} \to \fT^{\#}$ is unital and completely positive.
Throughout this text, by an abuse of notation, for a map $\phi\colon \fS\to \fT$ where $\fS$ is a selfadjoint operator space and $\fT$ is a (unital) operator system we will be writing $\phi^\#\colon \fS^\#\to \fT$ also for the map such that $\phi^\#(s,\la)=\phi(s)+\la 1_\fT$.

\begin{remark} \label{R:c* unit}
We will be making the following convention for a completely contractive completely positive map $\phi \colon \C \to \B(H)$ on a C*-algebra $\C$.
If $\C$ is not unital, then we extend $\phi$ to its unitisation $\phi^{\#} \colon \C^{\#} \to \B(H)$ by considering 
\[
\phi^{\#}(c,\la) = \phi(c) + \la I_H,
\]
which is unital completely positive (and thus completely contractive), see \cite[Proposition 2.2.1]{BO08}.
If $\C$ has a unit $1$, but $\phi(1) \neq I_H$, then again we consider the same formula for $\phi^{\#} \colon \C^{\#} \to \B(H)$ which is unital completely positive (and thus completely contractive).
However, in this case $\C^{\#}$ is $*$-isomorphic as a C*-algebra to $\C \oplus \bC(1^{\#} - (1,0))$ for the unit $1^{\#}:= (0, 1)$ of $\C^{\#}$, see \cite[Lemma 2.2.3]{BO08}.

Another way to see this is by considering a faithful non-degenerate $*$-representation $\C \subseteq \B(K)$.
If $\C$ is not unital, then $I_K \notin \C$; we write $\C^{\#} := \C + \bC I_K$ and set 
\[
\phi^{\#}(c +\la I_K) = \phi(c) + \la I_H.
\]
If $I_K \in \C$, but $\phi(I_K)\neq I_H$, then we consider $\B(K \oplus \bC)$ and $\C$ as its $(1,1)$-corner; in this case we write $\C^{\#} := \C + 1_{K \oplus \bC}$ and it follows that $\C^{\#} = \C \oplus \bC p$ for $p=I_{K \oplus \bC} - I_K$.
We then define
\[
\phi^{\#}(c + \la p) = \phi(c) + \la (I_H - \phi(1_K)),
\]
which is unital completely positive (and thus completely contractive).
\end{remark}

In \cite[Theorem 2.25]{CS21} Connes and van Suijlekom prove that there exists a (necessarily unique up to $*$-isomorphisms) C*-envelope in the category of selfadjoint operator spaces with respect to embeddings, rather than completely isometric complete order embeddings.
To make a distinction, the authors in \cite{CS21} refer to it as the ${\rm C}^{\#}$-envelope; however the authors in \cite{KKM23} refer to it as the C*-envelope.
We will use the latter terminology.

A \emph{C*-extension} of a selfadjoint operator space $\fS$ is a pair $(\C, \io)$ where $\io \colon \fS \to \C$ is an embedding and $\ca(\io(\fS)) = \C$.
In \cite[Lemma 2.22]{CS21} it is shown that any $\fS$ admits a unique minimal C*-extension $\cenv(\fS)$, which we will call \emph{the C*-envelope of $\fS$}.
In particular, $\cenv(\fS)$ is the C*-algebra generated by $\fS$ inside $\cenv(\fS^{\#})$.
We should note that in \cite[Theorem 2.25]{CS21} the authors also prove that, if $\fS$ is an operator system, then the C*-envelope of $\fS$ as a selfadjoint operator space coincides with the C*-envelope of $\fS$ as an operator system.

The notion of an embedding in the category of selfadjoint operator spaces is the suitable one in order to obtain the existence of a minimal C*-cover, see \cite[Remark 2.27]{CS21}. 
The requirement of $\phi^\#$ being an embedding of operator systems is justified also by \cite[Section 4]{Rus23} and \cite[Example 2.14]{KKM23}, where it is emphasised that inclusions of selfadjoint operator spaces are not automatically embeddings.
It is thus natural to ask, when the inclusion of a selfadjoint operator space in the C*-algebra it generates is an embedding.
We have the following \lq\lq natural'' lemma.

\begin{lemma} \label{L:ext}
Let $\fS$ and $\fT$ be selfadjoint operator spaces and let $\phi \colon \fS \to \fT$ be a completely isometric complete order embedding.
Then the map $\phi$ is an embedding if and only if every $\vphi \in {\rm CCP}(M_n(\phi(\fS)), \bC)$ extends to a $\wt{\vphi} \in {\rm CCP}(M_n(\fT), \bC)$, for all $n \in \bN$.
\end{lemma}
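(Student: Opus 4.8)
The plan is to unwind the definition of ``embedding'' from the previous paragraph: $\phi$ is an embedding precisely when $\phi$ is a completely isometric complete order embedding (which is assumed) \emph{and} the unitisation $\phi^{\#}\colon \fS^{\#}\to\fT^{\#}$ is a complete order embedding of operator systems. Since $\phi^{\#}$ is automatically unital and completely positive by \cite[Lemma 4.9]{Wer02}, the only content left is the positivity of $(\phi^{\#})^{-1}$ on the image, i.e.\ that $\phi^{\#}$ \emph{reflects} positivity at every matrix level. So the real task is to show: for all $n$ and all selfadjoint $(s,\alpha)\in M_n(\fS^{\#})$, the element $(\phi(s),\alpha)$ being positive in $M_n(\fT^{\#})$ forces $(s,\alpha)$ to be positive in $M_n(\fS^{\#})$, and to match this against the stated extension condition.

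First I would replace $\fT$ by $\phi(\fS)$ throughout (identifying $\fS$ with $\phi(\fS)\subseteq\fT$ via the completely isometric complete order embedding $\phi$), so that the claim becomes: the inclusion $\fS\hookrightarrow\fT$ is an embedding iff every $\varphi\in{\rm CCP}(M_n(\fS),\bC)$ extends to some $\widetilde\varphi\in{\rm CCP}(M_n(\fT),\bC)$. Next I would recall Werner's description of the positive cone of $M_n(\fS^{\#})$: a selfadjoint $(s,\alpha)\in M_n(\fS^{\#})$ is positive iff $\alpha\ge 0$ and $\varphi(\alpha_\varepsilon^{-1/2}s\,\alpha_\varepsilon^{-1/2})\ge -1$ for all $\varepsilon>0$ and all $\varphi\in{\rm CCP}(M_n(\fS),\bC)$ — and the analogous statement for $M_n(\fT^{\#})$ with ${\rm CCP}(M_n(\fT),\bC)$. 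With these two descriptions side by side, the equivalence becomes essentially a statement about whether the two families of defining inequalities cut out the same cone.

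For the direction ($\Leftarrow$): assume every $\varphi\in{\rm CCP}(M_n(\fS),\bC)$ extends to $\widetilde\varphi\in{\rm CCP}(M_n(\fT),\bC)$. Take $(s,\alpha)\in M_n(\fS^{\#})$ selfadjoint with $(s,\alpha)$ positive in $M_n(\fT^{\#})$; then $\alpha\ge 0$ and $\psi(\alpha_\varepsilon^{-1/2}s\,\alpha_\varepsilon^{-1/2})\ge -1$ for every $\psi\in{\rm CCP}(M_n(\fT),\bC)$. Given any $\varphi\in{\rm CCP}(M_n(\fS),\bC)$, apply this to $\psi=\widetilde\varphi$; since $\alpha_\varepsilon^{-1/2}s\,\alpha_\varepsilon^{-1/2}\in M_n(\fS)$ we get $\varphi(\alpha_\varepsilon^{-1/2}s\,\alpha_\varepsilon^{-1/2})=\widetilde\varphi(\alpha_\varepsilon^{-1/2}s\,\alpha_\varepsilon^{-1/2})\ge -1$, so $(s,\alpha)$ is positive in $M_n(\fS^{\#})$ by Werner's criterion. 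Hence $\phi^{\#}$ reflects positivity at every level, so it is a complete order embedding of operator systems, and $\phi$ is an embedding. For the direction ($\Rightarrow$): assume $\phi$ is an embedding, fix $\varphi\in{\rm CCP}(M_n(\fS),\bC)$, and form its unitisation $\varphi^{\#}\colon M_n(\fS)^{\#}=M_n(\fS^{\#})\to\bC$, which is unital completely positive. Since $\phi^{\#}\colon M_n(\fS^{\#})\to M_n(\fT^{\#})$ is a unital complete order embedding, we may regard $M_n(\fS^{\#})$ as an operator subsystem of $M_n(\fT^{\#})$ and apply Arveson's extension theorem to extend the ucp functional $\varphi^{\#}$ to a ucp map $\Psi\colon M_n(\fT^{\#})\to\bC$; restricting $\Psi$ to $M_n(\fT)$ gives a ccp functional $\widetilde\varphi$ with $\widetilde\varphi|_{M_n(\fS)}=\varphi$, as required. (I would be careful to note $M_n(\fS^{\#})=M_n(\fS)^{\#}$ as matrix-ordered operator systems — this is exactly how Werner's unitisation interacts with amplification — and that the relevant Werner-positive cone is what makes $M_n(\fS^{\#})$ an operator \emph{system} so Arveson's extension theorem applies.)

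The main obstacle I anticipate is the bookkeeping in the ($\Rightarrow$) direction: making sure that ``$\phi^{\#}$ is a complete order embedding of operator systems'' is applied at the matrix level $M_n$ rather than just at level one, and that the canonical identification $M_n(\fS^{\#})\cong M_n(\fS)^{\#}$ (with the Werner ordering) is legitimate, so that Arveson extension can be invoked on the operator system $M_n(\fS)^{\#}$ sitting inside $M_n(\fT)^{\#}$. A secondary subtlety is that extending $\varphi$ rather than $\varphi^{\#}$ requires knowing that the ucp extension $\Psi$ necessarily restricts to a ccp (not just positive) functional on $M_n(\fT)$, which follows since $\|\Psi\|=1$ on the operator system forces $\|\Psi|_{M_n(\fT)}\|\le 1$; this should be spelled out but is routine given \cite[Lemma 4.9]{Wer02} and the standard fact that the norm of a ucp map is $1$.
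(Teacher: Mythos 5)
Your proposal is correct and follows essentially the same route as the paper: the backward direction via Werner's description of the positive cone of the unitisation together with the extended functionals, and the forward direction via unitising $\vphi$ and applying Arveson's Extension Theorem inside $M_n(\fT^\#)$. One inaccuracy to fix: the identification $M_n(\fS^\#)=M_n(\fS)^\#$ that you propose to rely on is false as stated --- $M_n(\fS^\#)=M_n(\fS)\oplus M_n(\bC)$ whereas $M_n(\fS)^\#=M_n(\fS)\oplus\bC$, so they do not even have the same dimension. What is true, and all that your argument needs, is the unital complete order inclusion $M_n(\fS)^\#\subseteq M_n(\fS^\#)$ (sending the adjoined unit to $(0,I_n)$), which is \cite[Corollary 4.4]{Wer02}; combined with the embedding $M_n(\fS^\#)\subseteq M_n(\fT^\#)$ coming from $\phi^\#$, this gives the chain of operator subsystem inclusions to which Arveson's theorem applies, exactly as in the paper's proof.
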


\begin{proof}
Suppose that $\phi$ is an embedding, so that $\phi^\#$ is a unital complete order embedding.
Hence $M_n(\phi(\fS)^\#)$ embeds in $M_n(\fT^\#)$ for every $n\in\bN$.
By \cite[Corollary 4.4]{Wer02} we thus obtain the inclusions
\[
M_n(\phi(\fS))^\# \subseteq M_n(\phi(\fS)^\#) \subseteq M_n(\fT^\#)
\foral 
n \in \bN.
\]
Now let $\vphi$ be in ${\rm CCP}(M_n(\phi(\fS)), \bC)$ and consider its unitisation $\vphi^\# \in {\rm UCP}(M_n(\phi(\fS))^\#, \bC)$.
By Arveson's Extension Theorem we obtain an extension $\wt{\vphi^\#} \in {\rm UCP}(M_n(\fT^\#), \bC)$ of $\vphi^\#$.
Then the map 
\[
\wt{\vphi} := \wt{\vphi^\#}|_{M_n(\fT)}
\]
is the required completely contractive and completely positive extension of $\vphi$.

Conversely, take $(\phi^{(n)}(s), \al)$ be positive in $M_n(\fT^\#)$.
Then $\al \geq 0$.
Take $\vphi \in {\rm CCP}(M_n(\fS),\bC)$ and consider 
\[
\vphi \circ (\phi^{(n)})^{-1} \in {\rm CCP}(M_n(\phi(\fS)), \bC).
\]
By assumption we have that it extends to a $\wt{\vphi} \in {\rm CCP}(M_n(\fT), \bC)$ and thus
\[
\wt{\vphi}(\al_{\eps}^{-1/2} \phi^{(n)}(s) \al_{\eps}^{-1/2})
\geq
-1
\foral 
\eps>0.
\]
But then we get
\[
\vphi(\al_{\eps}^{-1/2} s \al_{\eps}^{-1/2})
=
\wt{\vphi}(\al_{\eps}^{-1/2} \phi^{(n)}(s) \al_{\eps}^{-1/2})
\geq
-1
\foral 
\eps>0,
\]
where we use the fact that $\phi$ is a $\bC$-bimodule map.
Hence $(s,\al)$ is positive in $M_n(\fS^\#)$, as required.
\end{proof}

Under the assumptions of Lemma \ref{L:ext}, we see that if $\wt{\vphi} \in {\rm CCP}(M_n(\fT), \bC)$ is an extension of $\vphi \in {\rm CCP}(M_n(\phi(\fS)), \bC)$ with $\| \vphi \| = 1$, then $\|\wt{\vphi}\| = 1$ as well.
The following corollary is an immediate application of Lemma \ref{L:ext}.

\begin{corollary} \label{C:ext}
Let $\fS \subseteq \ca(\fS)$ be a selfadjoint operator space. 
The following are equivalent:
\begin{enumerate}
\item The inclusion $\fS \subseteq \ca(\fS)$ is an embedding.
\item For every $n\in \bN$ and every positive map $\vphi\colon M_n(\fS)\to \bC$ with $\|\vphi\|=1$ there exists a positive map $\wt{\vphi}\colon M_n(\ca(\fS))\to \bC$ with $\|\wt{\vphi}\|=1$ that extends $\vphi$. 
\end{enumerate}
In particular, if any (and thus all) of the above is satisfied, then $\fS^\#$ is completely order isomorphic with the operator system $\fS+\bC 1^\#\subseteq \ca(\fS)^{\#}$, with a canonical identification of the units.
\end{corollary}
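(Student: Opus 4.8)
The plan is to obtain this as a direct application of Lemma \ref{L:ext} with $\fT = \ca(\fS)$ and $\phi$ the inclusion map. First I would observe that the inclusion $\fS \subseteq \ca(\fS)$ is automatically a completely isometric complete order embedding: both $\fS$ and $\ca(\fS)$ are concretely represented inside a common $\B(H)$ and carry the matricial norms and positive cones inherited from $M_n(\B(H))$, so the inclusion is completely isometric, and an element of $M_n(\fS)$ is positive in $M_n(\ca(\fS))$ exactly when it is positive in $M_n(\fS)$. Thus Lemma \ref{L:ext} applies and tells us that (i) holds if and only if every $\vphi \in {\rm CCP}(M_n(\fS), \bC)$ extends to some $\wt{\vphi} \in {\rm CCP}(M_n(\ca(\fS)), \bC)$, for all $n \in \bN$. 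The only remaining work is to reconcile this ${\rm CCP}$-formulation with the \emph{positive, norm-one} formulation in item (ii).

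The key observation for this dictionary is that, for any selfadjoint operator space $\fR$, the set ${\rm CCP}(M_n(\fR), \bC)$ coincides with the set of positive functionals on $M_n(\fR)$ of norm at most $1$. Indeed, the completely bounded norm of a bounded functional on an operator space equals its norm, so the completely contractive requirement is just $\| \cdot \| \leq 1$; and a positive functional $\psi$ on a selfadjoint operator space is automatically completely positive, since for $[r_{ij}] \in M_k(\fR)$ positive (inside $M_k(\B(H))$) and $\xi \in \bC^k$ the compression $\sum_{i,j} \ol{\xi_i} \xi_j r_{ij}$ is a positive element of $\fR$, whence $\langle [\psi(r_{ij})] \xi, \xi \rangle = \psi(\sum_{i,j} \ol{\xi_i} \xi_j r_{ij}) \geq 0$. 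Granting this identification, the equivalence is a matter of rescaling. For [(i) $\Rightarrow$ (ii)], a positive $\vphi$ with $\|\vphi\| = 1$ lies in ${\rm CCP}(M_n(\fS), \bC)$, so it admits a ${\rm CCP}$-extension $\wt{\vphi}$, which is positive with $\|\wt{\vphi}\| \leq 1$; since $\wt{\vphi}$ restricts to $\vphi$ we get $\|\wt{\vphi}\| \geq 1$, hence $\|\wt{\vphi}\| = 1$ (this is precisely the remark following Lemma \ref{L:ext}). For [(ii) $\Rightarrow$ (i)], given $\vphi \in {\rm CCP}(M_n(\fS), \bC)$ with $c := \|\vphi\| \leq 1$, extend $0$ by $0$ when $c = 0$, and otherwise apply (ii) to $c^{-1}\vphi$ and scale the resulting norm-one extension back by $c$ to produce a ${\rm CCP}$-extension of $\vphi$; then Lemma \ref{L:ext} gives (i).

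Finally, for the \lq\lq in particular'' clause I would unpack the definition of an embedding: assuming (i), the map $\phi^\# \colon \fS^\# \to \ca(\fS)^\#$ is a unital complete order embedding of operator systems, and by the formula $\phi^\#(s, \la) = s + \la 1^\#$ its range is exactly $\fS + \bC 1^\#$; hence $\phi^\#$ is a unital complete order isomorphism of $\fS^\#$ onto the operator system $\fS + \bC 1^\# \subseteq \ca(\fS)^\#$, carrying the unit of $\fS^\#$ to $1^\#$. I do not expect a genuine obstacle here: as the text indicates, the corollary is an immediate consequence of Lemma \ref{L:ext}, and the only point needing (minor) care is this translation between the two phrasings of the extension condition, namely the automatic complete positivity of positive functionals on selfadjoint operator spaces together with the norm bookkeeping under rescaling.
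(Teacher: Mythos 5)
Your proposal is correct and follows exactly the route the paper intends: the paper offers no separate proof, declaring the corollary an immediate application of Lemma \ref{L:ext} after noting that a ${\rm CCP}$ extension of a norm-one $\vphi$ is again norm one, and your write-up simply fills in the routine dictionary (automatic complete positivity of positive functionals on selfadjoint operator spaces, equality of norm and cb-norm for functionals, rescaling) plus the unpacking of the definition of embedding for the final clause. No gaps.
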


By using the setup of \cite[Example 2.14]{KKM23} and \cite[Example 4.1]{Rus23}, we get an example of an inclusion $\fS \subseteq \ca(\fS)$ that is not an embedding.

\begin{example} \label{E:not incl}
As in \cite[Example 2.14]{KKM23}, consider the element $x := E_{11} - \frac{1}{2}E_{22}$ in $M_2$.
It then follows that the space generated by $x$ in $M_2$ is a (one-dimensional) selfadjoint operator space with trivial cone structure, i.e., all matricial cones are $\{0\}$, as inherited from intersecting with the cones of $M_2$.
Moreover, it is easy to see that $\ca(\{x\}) = \bC^2$, which has the usual matricial cone structure in $M_2$.

We claim that the inclusion $\bC x \subseteq \ca(\{x\})$ is not an embedding as $\bC x$ admits completely contractive completely positive functionals that have no completely contractive completely positive extensions on $\bC^2$.
For such an example take $\vphi$ such that $\vphi(x) = -1$.
Then $\vphi$ is positive on $\bC x$ as the cone structure is trivial, and of norm one as it is minus the compression to the $(1,1)$-entry.
However $\vphi$ does not extend to a norm one positive functional on $\bC^2$ and thus on $M_2$.

To reach a contradiction, suppose there is a norm one positive functional $\wt{\vphi} \colon M_2 \to \bC$ such that $\wt{\vphi}(x) = \vphi(x) = -1$; then $\wt{\vphi}(\cdot) = {\rm Tr}(\varrho \cdot)$ for a positive matrix $\varrho \in M_2$ with unital trace, i.e., $\varrho_{11} + \varrho_{22} = 1$.
By applying $\wt{\vphi}(x) = -1$ we get $\varrho_{11} - \frac{1}{2}\varrho_{22} = -1$.
Hence we have
\[
2\varrho_{11} + \frac{1}{2}\varrho_{22} = \left(\varrho_{11} + \varrho_{22}\right) + \bigg(\varrho_{11} - \frac{1}{2}\varrho_{22}\bigg) = 1+(-1)= 0.
\]
Hence $\varrho_{11} = \varrho_{22} = 0$ (as $\varrho$ is positive), which is a contradiction.

More generally one can consider for fixed $n \geq 2$ the element $x_n:= E_{11} - \sum_{k=2}^n \frac{1}{k} E_{kk}$.
The same arguments apply as $\ca(\{x_n\}) = \bC^n$ inside $M_n$.
Note that the selfadjoint operator spaces $\bC x_n$ are all completely order isomorphic for any $n \geq 2$, since they all have trivial cone structure, and none is complete order isomorphic to $\bC$ (as the latter has non-trivial cones).
\end{example}

In what follows we identify a class of selfadjoint operator spaces for which the inclusion $\fS \subseteq \ca(\fS)$ is automatically an embedding. 

\begin{proposition}\label{P:ext}
Let $\fS \subseteq \ca(\fS)$ be a selfadjoint operator space. 
Suppose there is a C*-algebra $A \subseteq \fS$ such that $A \cdot \fS \subseteq \fS$, and an approximate unit $(u_\la)_\la$ for $A$ such that $\lim_\la u_\la s = s$ for every $s\in \fS$. 
Then every $\vphi \in {\rm CCP}(M_n(\fS), \bC)$ attains an extension to a $\wt{\vphi} \in {\rm CCP}(M_n(\ca(\fS)), \bC)$ with $\nor{\wt{\vphi}} = \nor{\vphi}$, for any $n \in \bN$.
Consequently, the inclusion $\fS \subseteq \ca(\fS)$ is an embedding.
\end{proposition}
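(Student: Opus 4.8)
The plan is to invoke Lemma~\ref{L:ext}: it suffices to show that for every $n$ each $\vphi\in{\rm CCP}(M_n(\fS),\bC)$ extends to some $\wt\vphi\in{\rm CCP}(M_n(\ca(\fS)),\bC)$ with $\nor{\wt\vphi}=\nor{\vphi}$, and then the final clause (that $\fS\subseteq\ca(\fS)$ is an embedding) is immediate. Since $M_n(A)\subseteq M_n(\fS)$ is again a C*-subalgebra with $M_n(A)\cdot M_n(\fS)\subseteq M_n(\fS)$, with $(\diag(u_\la,\dots,u_\la))_\la$ an approximate unit for $M_n(A)$ having the stated absorption property on $M_n(\fS)$, and since $\ca(M_n(\fS))=M_n(\ca(\fS))$, it is enough to treat the case $n=1$ in the generality of the hypotheses; that is, to extend an arbitrary $\vphi\in{\rm CCP}(\fS,\bC)$ to a $\wt\vphi\in{\rm CCP}(\ca(\fS),\bC)$ with $\nor{\wt\vphi}=\nor{\vphi}$.

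The crucial observation I would establish first is that $(u_\la)_\la$ is a two-sided approximate unit for $\ca(\fS)$, not just for $A$. Indeed, the linear span of the products $s_1\cdots s_m$ with $s_i\in\fS$ is a dense $*$-subalgebra of $\ca(\fS)$ (using that $\fS$ is selfadjoint), and $u_\la(s_1\cdots s_m)=(u_\la s_1)s_2\cdots s_m\to s_1\cdots s_m$ since $u_\la s_1\to s_1$; by the uniform bound $\nor{u_\la}\le 1$ this propagates to $u_\la c\to c$ for every $c\in\ca(\fS)$, and symmetrically $cu_\la\to c$ (note that $su_\la\to s$ for $s\in\fS$ follows from the hypothesis by passing to adjoints). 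Along the way I would also record that $A\cdot\fS\cdot A\subseteq\fS$ (again by taking adjoints), so in particular $u_\la^2\in A\subseteq\fS$ and $u_\la s u_\la\in\fS$, and that $u_\la s u_\la\to s$ for $s\in\fS$.

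Next I would identify $\rho:=\vphi|_A$ as a positive functional on $A$ and show $\nor{\rho}=\nor{\vphi}$. Positivity is clear, since a positive element of $A$ is positive in $\ca(\fS)$ and hence a positive element of $\fS$. For the nontrivial inequality $\nor{\vphi}\le\nor{\rho}$, I would conjugate the positive element $\bigl(\begin{smallmatrix}1 & s\\ s^* & 1\end{smallmatrix}\bigr)$ of $M_2(\ca(\fS)^\#)$ (for $\nor{s}\le1$) by $\diag(u_\la,u_\la)$; the result lies in $M_2(\fS)$ and is positive there, so applying the positive map $\vphi^{(2)}$ and reading off the resulting $2\times2$ scalar inequality yields $|\vphi(u_\la s u_\la)|\le\vphi(u_\la^2)\le\vphi(u_\la)=\rho(u_\la)\le\nor{\rho}$; letting $\la$ increase and using $u_\la s u_\la\to s$ together with continuity of $\vphi$ gives $|\vphi(s)|\le\nor{\rho}$. (Here $\nor{\rho}=\lim_\la\rho(u_\la)$ is the standard formula for the norm of a positive functional.)

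Finally, I would extend $\vphi$ by the Hahn--Banach theorem to a bounded functional $\wt\vphi$ on $\ca(\fS)$ with $\nor{\wt\vphi}=\nor{\vphi}=\nor{\rho}$. Then $\wt\vphi(u_\la)=\vphi(u_\la)=\rho(u_\la)\to\nor{\rho}=\nor{\wt\vphi}$, and since $(u_\la)_\la$ is an approximate unit for $\ca(\fS)$, the standard criterion forces $\wt\vphi\ge0$. A positive functional on a C*-algebra is completely positive with $\nor{\wt\vphi}_{\rm cb}=\nor{\wt\vphi}=\nor{\vphi}\le1$, so $\wt\vphi\in{\rm CCP}(\ca(\fS),\bC)$ is the required norm-preserving extension; Lemma~\ref{L:ext} then finishes. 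I expect the one place demanding genuine care to be the equality $\nor{\vphi}=\nor{\vphi|_A}$: it is exactly what makes the Hahn--Banach extension automatically positive, and it is where both the module identity $A\fS A\subseteq\fS$ and the $2$-positivity of $\vphi$ are used. The more obvious strategy of dilating $\vphi$ to a $*$-representation of $\ca(\fS)$ à la Stinespring is awkward here, because products of elements of $\fS$ need not lie in $\fS$ and so the usual GNS sesquilinear form is unavailable; routing through Hahn--Banach sidesteps this entirely.
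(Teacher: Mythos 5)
Your proof is correct, and its overall skeleton matches the paper's: show that $(u_\la)_\la$ is a two-sided approximate unit for $\ca(\fS)$, prove that $\lim_\la\vphi(u_\la^{(n)})=\nor{\vphi}$, extend by Hahn--Banach, and invoke the standard criterion (\cite[Theorem 3.3.3]{Mur90}) that a norm-one functional attaining its norm along an approximate unit is positive, hence completely positive. Where you diverge is in the central estimate. The paper proves $\lim_\la\vphi(u_\la^{(n)})=1$ by directly amending the C*-algebraic argument of \cite[Theorem 3.3.3]{Mur90} inside $M_n(\fS)$: it expands $0\le(\al s u_\mu^{(n)}-u_\la^{(n)})^*(\al s u_\mu^{(n)}-u_\la^{(n)})$, takes limits in $\mu$, and optimises over $\al$ to get $|\vphi(u_\la^{(n)}s)|^2\le\vphi(u_\la^{(n)})$ --- using only positivity of $\vphi$ (not $2$-positivity). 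You instead restrict to the C*-subalgebra $A$, where the norm formula $\nor{\vphi|_A}=\lim_\la\vphi(u_\la)$ is available off the shelf, and transfer the norm back to $\fS$ via the conjugated matrix $\diag(u_\la,u_\la)\bigl(\begin{smallmatrix}1&s\\ s^*&1\end{smallmatrix}\bigr)\diag(u_\la,u_\la)\in M_2(\fS)_+$ and the $2$-positivity of $\vphi$ --- which is legitimately at your disposal since $\vphi$ is assumed completely positive, and which requires the extra observation $A\cdot\fS\cdot A\subseteq\fS$ (immediate from selfadjointness). Your preliminary reduction to $n=1$ via $M_n(A)\subseteq M_n(\fS)$ is valid and merely cosmetic relative to the paper's use of $u_\la\otimes I_n$ throughout. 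Both routes are sound; the paper's is marginally more economical in its hypotheses on $\vphi$, while yours outsources more of the work to standard C*-theory on $A$.
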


\begin{proof}
We note that the last conclusion follows because of Corollary \ref{C:ext}.
Hence it suffices to show the extension property.

Towards this end, we see that since $A$ and $\fS$ are selfadjoint, and since every $u_\la$ is positive, then we have $\fS \cdot A \subseteq \fS$ and $\lim_\la s u_\la = s$ for all $s \in \fS$.
Likewise for $(u_\la \otimes I_n)_\la$ in $M_n(\fS)$, for any $n \in \bN$.
For notational convenience set $u_\la^{(n)}:= u_\la \otimes I_{n}$ and note that 
\[
\lim_\la u_\la^{(n)} c = c = \lim_\la c u_\la^{(n)}
\foral c\in M_n(\ca(\fS)),
\]
since $\fS$ generates $\ca(\fS)$.

We need to show that every $\vphi \in {\rm CCP}(M_n(\fS), \bC)$ admits a contractive positive extension $\wt{\vphi} \colon M_n(\ca(\fS))\to \bC$ (note that $\wt{\vphi}$ will then be automatically completely contractive and completely positive). 
Without loss of generality we may assume that $\|\vphi\|=1$.
We claim that it suffices to show that $\lim_\la \vphi(u_\la^{(n)})=1$.

Indeed, if that is the case, then by the Hahn-Banach Theorem we may extend $\vphi$ to a functional $\wt{\vphi}\colon M_n(\ca(\fS))\to \bC$ satisfying $\|\wt{\vphi}\|= 1$. 
Since $(u_\la^{(n)})_\la$ is also an approximate unit for $M_n(\ca(\fS))$ satisfying 
\[
\|\wt{\vphi}\| = 1 = \lim_\la \vphi(u_\la^{(n)}) = \lim_\la\wt{\vphi}(u_\la^{(n)}),
\]
we will then get that $\wt{\vphi}$ is positive, see \cite[Theorem 3.3.3]{Mur90}.

We now show that $\lim_\la \vphi(u_\la^{(n)})=1$ by amending the C*-argument of \cite[Theorem 3.3.3]{Mur90}.
Fix $s\in M_n(\fS)$ such that $\|s\|\leq 1$ and let $\al \in \bC$. 
Then, in $\ca(\fS)$, we have
\begin{align*}
0 
& \leq
(\al s u_{\mu}^{(n)}-u_\la^{(n)})^* (\al s u_{\mu}^{(n)}-u_\la^{(n)}) \\
& =
|\al|^2 u_\mu^{(n)} s^*su_\mu^{(n)}-\ol{\al}u_\mu^{(n)} s^* u_\la^{(n)}-\al u_\la^{(n)} s u_\mu^{(n)} +(u_\la^{(n)})^2\\
& \leq 
|\al|^2 u_\mu^{(n)} u_\mu^{(n)}-\ol{\al}u_\mu^{(n)} s^* u_\la^{(n)}-\al u_\la^{(n)} s u_\mu^{(n)} +(u_\la^{(n)})^2,
\end{align*}
where we used that
\[
u_\mu^{(n)} s^*su_\mu^{(n)} \leq \|s\|^2 u_\mu^{(n)} u_\mu^{(n)} \leq u_\mu^{(n)} u_\mu^{(n)}.
\]
Since $\vphi$ is positive and $\vphi(u_\mu^{(n)} u_\mu^{(n)}) \leq \|\vphi\|=1$, we obtain
\begin{align*}
0
& \leq 
|\al|^2 \vphi(u_\mu^{(n)} u_\mu^{(n)})-\ol{\al}\vphi(u_\mu^{(n)} s^* u_\la^{(n)})-\al \vphi(u_\la^{(n)} s u_\mu^{(n)}) + \vphi((u_\la^{(n)})^2)\\
& \leq
|\al|^2-\ol{\al}\vphi(u_\mu^{(n)} s^* u_\la^{(n)})-\al \vphi(u_\la^{(n)} s u_\mu^{(n)}) + \vphi((u_\la^{(n)})^2).
\end{align*}
Taking the limit with respect to $\mu$ yields
\[
0 \leq |\al|^2-\ol{\al}\vphi(s^* u_\la^{(n)})-\al \vphi(u_\la^{(n)} s ) + \vphi((u_\la^{(n)})^2), \foral \la.
\]
By setting $\al = \vphi(s^*u_\la^{(n)})$, and using that $\nor{s} \leq 1$ and $u_\la^2 \leq u_\la$ for all $\la$, we get
\[
|\vphi(u_\la^{(n)} s)|^2
\leq 
\vphi((u_\la^{(n)})^2) \leq \vphi(u_\la^{(n)}).
\]
Hence taking the limit inferior with respect to $\la$ implies 
\[
|\vphi(s)|^2 = \liminf_\la |\vphi(u_\la^{(n)} s)|^2 \leq \liminf_\la \vphi(u_\la^{(n)}) \leq \|\vphi\| = 1.
\] 
Taking the supremum over $\|s\| \leq 1$ gives 
\[
1=\liminf_\la \vphi(u_\la^{(n)})\leq \limsup_\la \vphi(u_\la^{(n)})\leq \|\vphi\|=1,
\]
and hence we obtain $\lim_\la \vphi(u_\la^{(n)})=1$, as required.
\end{proof}

\begin{remark}
A class of (non-unital) selfadjoint operator spaces that satisfies the assumptions of Proposition \ref{P:ext} is given by the stabilisations of operator systems considered by Connes and van Suijlekom \cite{CS21}.
Indeed if $\S$ is an operator system and $\K$ denotes the compact operators on $\ell^2$, then $1_\S \otimes \K$ forms a C*-subalgebra of $\S \otimes \K$ that satisfies these assumptions.
As we will see later a second class arises in the context of non-degenerate C*-correspondences.
\end{remark}

\subsection{C*-algebras}

We close this section with some general C*-results that we will need later on.
Passing to the unitisation of a C*-algebra allows to use Stinespring's Dilation Theorem in the strong sense of corners instead of just conjugations of $*$-representations by a contraction.

\begin{proposition}\label{P:Dil}
Let $\C$ be a C*-algebra and $\phi \colon \C \to \B(H)$ be a completely contractive and completely positive map.
Then there exists a Hilbert space $H' \supseteq H$ and a $*$-homomorphism $\Phi \colon \C \to \B(H')$ such that $\phi = P_H \Phi |_H$.
\end{proposition}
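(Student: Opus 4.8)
The plan is to reduce to the classical Stinespring Dilation Theorem by passing to Werner's unitisation (equivalently, the unitisation discussed in Remark \ref{R:c* unit}), where one has a genuinely unital completely positive map and hence a dilation that is a corner, not merely a compression along a contraction.

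First I would handle the unit. Following Remark \ref{R:c* unit}, replace $\C$ by $\C^{\#}$: if $\C$ is non-unital let $\C^{\#} = \C + \bC I$, and if $\C$ is already unital but $\phi(1_\C) \neq I_H$ pass to $\C^{\#} = \C \oplus \bC p$ as in that remark. In either case one obtains a unital C*-algebra $\C^{\#}$ containing $\C$ and a \emph{unital} completely positive extension $\phi^{\#} \colon \C^{\#} \to \B(H)$ with $\phi^{\#}|_{\C} = \phi$; by the computations recalled in Remark \ref{R:c* unit}, $\phi^{\#}$ is unital completely positive (and thus completely contractive).

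Next I would apply the classical Stinespring Dilation Theorem to the unital completely positive map $\phi^{\#} \colon \C^{\#} \to \B(H)$. This produces a Hilbert space $H'$, an isometry $V \colon H \to H'$, and a unital $*$-representation $\Psi \colon \C^{\#} \to \B(H')$ with $\phi^{\#}(\cdot) = V^* \Psi(\cdot) V$. Since $\Psi$ is unital, $V V^* = \Psi(1_{\C^{\#}})$ acts as the identity, so $V$ identifies $H$ with a closed subspace of $H'$, and under this identification $\phi^{\#}(\cdot) = P_H \Psi(\cdot)|_H$ with $H \subseteq H'$. Now restrict: set $\Phi := \Psi|_{\C}$, which is a $*$-homomorphism $\C \to \B(H')$, and observe that for $c \in \C \subseteq \C^{\#}$ we have $\phi(c) = \phi^{\#}(c) = P_H \Psi(c)|_H = P_H \Phi(c)|_H$, as required.

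I do not anticipate a serious obstacle here; the only point needing a little care is that the dilation genuinely realises $H$ as a \emph{subspace} of $H'$ rather than as the image of a mere contraction — and this is exactly what the passage to the unitisation buys, since unitality of $\phi^{\#}$ forces the Stinespring isometry $V$ to satisfy $V^*V = I_H$ with $\Psi(1)$ the corresponding projection. One should also note that $\Phi = \Psi|_{\C}$ need not be non-degenerate even if $\phi$ was; but the statement only asks for a $*$-homomorphism with $\phi = P_H \Phi|_H$, so this is immaterial.
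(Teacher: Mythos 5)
Your proposal is correct and follows essentially the same route as the paper: unitise $\phi$ to a unital completely positive map $\phi^{\#}$ on $\C^{\#}$ as in Remark \ref{R:c* unit}, apply Stinespring's Dilation Theorem to realise $H$ as a genuine subspace of $H'$ (using unitality so the Stinespring isometry embeds $H$), and restrict the resulting $*$-representation to $\C$. The extra care you take about the isometry $V$ and about non-degeneracy being immaterial is consistent with, and slightly more explicit than, the paper's argument.
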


\begin{proof}
Using the convention of Remark \ref{R:c* unit}, by \cite[Corollary 4.17 and Proposition 4.9 (c)]{Wer02} we have that the map
\[
\phi^{\#}\colon \C^{\#} \to \B(H)\text{; } c + \la 1^{\#}  \mapsto \phi(c) + \la I_H,
\]
is unital and completely positive.
By Stinespring's Dilation Theorem, the unital map $\phi^{\#}$ dilates to a $*$-representation $\Phi^{\#}\colon \ca(\G)^\#\to \B(H')$ where $H'\supseteq H$ so that $\phi^{\#} = P_H \Phi^{\#} |_{H}$. 
Setting $\Phi := \Phi^{\#}|_{\C}$ completes the proof.
\end{proof}

We will also need the following proposition.

\begin{proposition}\label{P:multiplier}
Let $\A \subseteq \B \subseteq \C$ be C*-algebras.
The following are equivalent:
\begin{enumerate}
\item $[\A \B] = \B$;
\item $[\B \A] = \B$;
\item $[\Phi(\A) H] = [\Phi(\B) H]$ for every non-degenerate $*$-representation $\Phi \colon \C \to \B(H)$ ;
\item $[\Phi(\A) H] = [\Phi(\B) H]$ for every $*$-representation $\Phi \colon \C \to \B(H)$.
\end{enumerate}

If in addition $\B$ is unital, then any (and thus all) of the items above is equivalent to $\A$ containing the unit of $\B$.
\end{proposition}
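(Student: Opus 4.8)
The plan is to prove the cycle $(1)\Rightarrow(4)\Rightarrow(3)\Rightarrow(1)$ together with $(1)\Leftrightarrow(2)$, and then the unital addendum. The equivalence $(1)\Leftrightarrow(2)$ is immediate on taking adjoints, since $\A$ and $\B$ are selfadjoint, so $[\A\B]^* = [\B\A]$ while $\B^* = \B$. For $(1)\Rightarrow(4)$, let $\Phi\colon\C\to\B(H)$ be a $*$-representation; $\Phi$ is contractive, so $\Phi([\A\B])$ is contained in the closed linear span of $\Phi(\A)\Phi(\B)$, whence $[\Phi(\B)H] = [\Phi([\A\B])H]\subseteq[\Phi(\A)\Phi(\B)H]\subseteq[\Phi(\A)H]$, and the reverse inclusion is clear because $\A\subseteq\B$. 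The implication $(4)\Rightarrow(3)$ is trivial.

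The crux is $(3)\Rightarrow(1)$, and here I would pass to the Banach-space dual: by Hahn--Banach it is enough to show that every $f\in\B^*$ with $f(ab) = 0$ for all $a\in\A$, $b\in\B$ is zero. Fix such an $f$, assume $f\neq 0$, extend it to $\tilde f\in\C^*$, and use the polar decomposition of functionals on a C*-algebra (equivalently, the GNS construction for the absolute value $|\tilde f|$) to write $\tilde f(c) = \langle\Phi(c)\xi,\eta\rangle$ for all $c\in\C$, where $\Phi\colon\C\to\B(H)$ is a cyclic --- hence non-degenerate --- $*$-representation and $\xi,\eta\in H$. Restricting to $\B$ we get $f(b) = \langle\Phi(b)\xi,\eta\rangle$, and the hypothesis $f(ab) = 0$ becomes $\langle\Phi(b)\xi,\Phi(a^*)\eta\rangle = 0$, i.e. (since $\A$ is selfadjoint) $\langle\Phi(b)\xi,\Phi(a)\eta\rangle = 0$ for all $a\in\A$, $b\in\B$.

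Now I apply (3) to this $\Phi$, obtaining $[\Phi(\A)H] = [\Phi(\B)H]$. Let $(e_\lambda)_\lambda$ be an approximate unit for $\A$; then $\Phi(e_\lambda)\zeta\to\zeta$ in norm for each $\zeta\in[\Phi(\A)H]$ (this follows from $\|e_\lambda a - a\|\to 0$, linearity and density, and $\|\Phi(e_\lambda)\|\le 1$). For $b\in\B$ the vector $\Phi(b)\xi$ lies in $[\Phi(\B)H] = [\Phi(\A)H]$, hence $\Phi(e_\lambda)\Phi(b)\xi\to\Phi(b)\xi$, and therefore
\[
f(b) = \langle\Phi(b)\xi,\eta\rangle = \lim_\lambda\langle\Phi(e_\lambda)\Phi(b)\xi,\eta\rangle = \lim_\lambda\langle\Phi(b)\xi,\Phi(e_\lambda)\eta\rangle = 0,
\]
the final inner products vanishing by the orthogonality relation above with $a = e_\lambda$. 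Thus $f = 0$, so $\A\B$ is norm-dense in $\B$, which is $(1)$. For the last assertion: if $1_\B\in\A$ then $\B = 1_\B\B\subseteq\A\B$, so $(1)$ holds; conversely, if $(1)$ holds then $1_\B\in\B = [\A\B]$, and approximating $1_\B$ by a finite sum $\sum_i a_ib_i$ with $a_i\in\A$, $b_i\in\B$ and using $e_\lambda a_i\to a_i$ together with $e_\lambda 1_\B = e_\lambda$ shows $e_\lambda\to 1_\B$ in norm, so $1_\B\in\overline{\A} = \A$.

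The step demanding the most care is $(3)\Rightarrow(1)$: the difficulty is to convert the purely Hilbert-space content of (3) (equality of two closed subspaces in every non-degenerate representation) back into the norm-density statement $[\A\B] = \B$. A naive attempt --- feeding an approximate unit of $\A$ into a faithful representation of $\C$ --- yields only strong-operator convergence $\Phi(e_\lambda b)\to\Phi(b)$, which is too weak to conclude. The Hahn--Banach reformulation, together with the realisation of $f$ as a single vector functional of a cyclic (and so non-degenerate, which is exactly what makes (3) applicable) representation, is what closes this gap, and it sidesteps the ultraproduct argument of Kim. The only minor points to verify are that a cyclic representation of a C*-algebra is non-degenerate and that the vectors representing $f$ may be taken in the essential subspace of $\Phi$; both are routine.
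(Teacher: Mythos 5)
Your proof is correct, but the crucial implication is handled by a genuinely different (though closely related) mechanism from the paper's. The paper proves [(iii) $\Rightarrow$ (ii)] contrapositively: if $[\B\A]$ is a proper closed left ideal of $\B$, it is contained in the left kernel $N_\tau$ of some state $\tau$ of $\B$; extending $\tau$ to a state of $\C$ and passing to its GNS representation $(H_\tau,\Phi_\tau,\xi_\tau)$, one checks via approximate units that the projection onto $[\Phi_\tau(\A)H_\tau]$ annihilates $\xi_\tau$ while the projection onto $[\Phi_\tau(\B)H_\tau]$ does not, so the two subspaces differ. You instead argue [(iii) $\Rightarrow$ (i)] directly: by Hahn--Banach it suffices to kill every $f\in\B^*$ annihilating $\A\B$, and you realise an extension of $f$ as a vector functional $\langle\Phi(\cdot)\xi,\eta\rangle$ in a cyclic (hence non-degenerate) representation of $\C$, then use item (iii) together with an approximate unit $(e_\lambda)_\lambda$ of $\A$ to force $f(b)=\lim_\lambda f(e_\lambda b)=0$. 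Both routes convert the Hilbert-space hypothesis into a statement about functionals via GNS-type representations; the paper's needs only the elementary fact that a proper closed left ideal lies in the left kernel of a state, whereas yours invokes the (standard but slightly heavier) realisation of an arbitrary bounded functional as a vector functional, in exchange for a direct rather than contrapositive argument. The remaining implications and the unital addendum coincide with the paper's treatment, and the two points you flag at the end --- that a cyclic representation is non-degenerate, and that the representing vectors may be taken in the essential subspace --- are indeed routine.
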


\begin{proof}
The implication [(iv) $\Rightarrow$ (iii)] is immediate, while [(i) $\Leftrightarrow$ (ii)] follows by taking adjoints.
If item (i) holds, then we have
\[
[\Phi(\A) H] \subseteq [\Phi(\B) H] = [\Phi(\A \B) H] = [\Phi(\A) \Phi(\B) H] \subseteq [\Phi(\A) H]
\]
for any $*$-representation $\Phi \colon \C \to \B(H)$, and thus item (iv) follows. 

Next we prove the implication [(iii) $\Rightarrow$ (ii)]. Assume that $[\B \A]$ is a proper left ideal of $\B$.
Then there exists a state $\tau$ of $\B$ such that $[\B \A] \subseteq N_\tau$, and we extend $\tau$ to a state of $\C$ which we denote by the same symbol, see for example \cite[Theorem 5.3.3 and Theorem 3.3.8]{Mur90}.
Consider the GNS-representation $(H_\tau, \Phi_\tau, \xi_\tau)$ of $\tau$, where $\xi_\tau$ is the canonical cyclic vector obtained as in \cite[Theorem 5.1.1]{Mur90}.
We will show that for the non-degenerate $*$-representation $\Phi_\tau$ we have
\[
[\Phi_\tau(\A) H_\tau] \subsetneq [\Phi_\tau(\B) H_{\tau}],
\]
which completes the proof of this implication.

Towards this end, let $P_\A$ be the projection on $[\Phi_\tau(\A) H_\tau]$ and $P_\B$ be the projection on $[\Phi_\tau(\B) H_\tau]$.
Fix $(a_\la)_\la$ be an approximate unit of $\A$, and $(b_\mu)_\mu$ be an approximate unit of $\B$, for which we get
\[
P_\A = \text{w*-}\lim_\la \Phi_{\tau}(a_\la)
\qand
P_\B = \text{w*-}\lim_\mu \Phi_{\tau}(b_\mu).
\]
Since $\A \subseteq \B$ we have $P_\A \leq P_\B$, and thus $P_\A P_\B = P_\B P_\A = P_\A$.
Consider the vector $h:= P_\B \xi_\tau$.
Since $(b_\mu^2)_\mu$ defines an approximate unit for $\B$ we have
\begin{align*}
\|h\| 
& = \lim_\mu \|\Phi_\tau(b_\mu)\xi_\tau\| 
= \lim_\mu \|b_\mu + N_\tau\| 
= \lim_\mu \tau(b_\mu^2)^{1/2} 
= \|\tau\|^{1/2} = 1,
\end{align*}
and thus $h \neq 0$.
On the other hand we have $\A \subseteq \B$, and so $\A = \A^2 \subseteq [\B \A] \subseteq N_\tau$. 
Hence
\begin{align*}
\|P_\A h\| 
& = \| P_\A \xi_\tau\| 
= \lim_\la \|\Phi_\tau(a_\la) \xi_\tau\| 
= \lim_\la \|a_\la+N_\tau\| 
= \lim_\la \tau(a_\la^2)^{1/2} = 0.
\end{align*}
Therefore $(P_\B - P_\A) h = h \neq 0$, and thus $P_\A \neq P_\B$, as required.

To complete the proof, suppose that $[\A \B] = \B$ and that $\B$ has a unit $1_{\B}$.
It then follows that $\lim_\la a_\la b = b$ for all $b \in \B$, and any approximate unit $(a_\la)_\la$ of $\A$.
In particular we have 
\[
\lim_\la a_\la = \lim_\la a_\la 1_{\B} = 1_{\B},
\]
and as $\A$ is closed we get $1_{\B} \in \A$. 
Conversely, if $\A$ contains the unit of $\B$, then it is evident that $[\A \B] = \B$, as required.
\end{proof}

\section{Hyperrigidity} \label{S:hyper}

\subsection{Hyperrigidity, UEP and maximality}

The ucp hyperrigidity property was introduced by Arveson \cite{Arv11} for a generating set $\G$ in a C*-algebra $\ca(\G)$.
A set $\G$ will be called \emph{ucp hyperrigid} if for every faithful non-degenerate $*$-representation $\Phi \colon \ca(\G) \to \B(H)$ and every sequence of unital completely positive maps $\phi_n \colon \B(H) \to \B(H)$ the following holds:
\[
\lim_n \|\phi_n(\Phi(g)) - \Phi(g)\|=0, \forall g \in \G
\Rightarrow
\lim_n \|\phi_n(\Phi(c)) - \Phi(c)\| = 0, \forall c \in \ca(\G).
\]
In \cite[Theorem 2.1]{Arv11} Arveson provides a reformulation of ucp hyperrigidity in terms of the unique extension property when the generating set is a separable operator system.
As indicated by Kim \cite[page 4]{Kim21}, the proof of \cite[Theorem 2.1]{Arv11} extends to the non-separable case by replacing ``separable'' with ``density character at most $\kappa$'' for any infinite cardinal $\kappa$.

We consider the following variant which appears in \cite[Lemma 2.7]{Sal17}.
We say that $\G$ is \emph{ccp hyperrigid} if for every faithful non-degenerate $*$-representation $\Phi \colon \ca(\G) \to \B(H)$ and every sequence of completely contractive completely positive maps $\phi_n \colon \B(H) \to \B(H)$ the following holds:
\[
\lim_n \|\phi_n(\Phi(g)) - \Phi(g)\|=0, \forall g \in \G
\Rightarrow
\lim_n \|\phi_n(\Phi(c)) - \Phi(c)\| = 0, \forall c \in \ca(\G).
\]
We start by showing that the two notions of hyperrigidity coincide when $\ca(\G)$ is not unital.

\begin{proposition} \label{P:ccp/ucp}
Let $\G\subseteq\ca(\G)$ be a generating set such that $\ca(\G)$ is not unital.
Then $\G$ is ccp hyperrigid if and only if $\G$ is ucp hyperrigid.
\end{proposition}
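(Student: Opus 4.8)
The plan is to prove the two implications separately. One direction is essentially trivial: if $\G$ is ucp hyperrigid, then since every sequence $\phi_n$ of completely contractive completely positive maps can in particular be tested, we would want the conclusion to follow — but this is the \emph{wrong} direction, since ccp hyperrigidity quantifies over a larger class of maps. So the genuine content splits as follows. First I would show [ccp hyperrigid $\Rightarrow$ ucp hyperrigid], which is immediate: the class of unital completely positive maps $\B(H)\to\B(H)$ is contained in the class of completely contractive completely positive maps, so the ccp hyperrigidity hypothesis applied to a ucp sequence $\phi_n$ yields exactly the ucp hyperrigidity conclusion. The real work is the converse, [ucp hyperrigid $\Rightarrow$ ccp hyperrigid], and this is where non-unitality of $\ca(\G)$ is used.

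For the converse, fix a faithful non-degenerate $*$-representation $\Phi\colon\ca(\G)\to\B(H)$ and a sequence $\phi_n\colon\B(H)\to\B(H)$ of completely contractive completely positive maps with $\lim_n\|\phi_n(\Phi(g))-\Phi(g)\|=0$ for all $g\in\G$. The idea is to \emph{unitise the maps} $\phi_n$ so as to convert them into ucp maps on a slightly larger space, apply ucp hyperrigidity there, and then compress back. Concretely, form $H':=H\oplus\bC$ and, using the convention of Remark~\ref{R:c* unit} (or directly \cite[Proposition~2.2.1]{BO08}), define $\psi_n\colon\B(H')\to\B(H')$ to be a unital completely positive dilation of $\phi_n$; for instance, $\psi_n\big(\begin{smallmatrix}T&\ast\\\ast&\ast\end{smallmatrix}\big)$ built from $\phi_n(T)$ together with a rank-one correction in the new coordinate so that $\psi_n(I_{H'})=I_{H'}$. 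One must be careful that $\psi_n$ is genuinely ucp; the standard trick is $\psi_n(S)=\phi_n(P_HSP_H)\oplus\big(\tau_n(S)\big)$ where $\tau_n$ is a state chosen so the whole map is unital, or more cleanly use that any ccp map admits a ucp extension to the unitisation once one passes to a one-dimensional ampliation. Since $\ca(\G)$ is \emph{non-unital}, the inflated representation $\Phi\oplus 0\colon\ca(\G)\to\B(H')$ is still non-degenerate as a representation of $\ca(\G)$ only after we adjoin — here is the subtlety — so instead I would regard $\G$ inside $\ca(\G)\subseteq\B(H')$ via $\Phi\oplus 0$, which is a (no longer faithful, but that is harmless after replacing by a faithful non-degenerate representation on $H'\oplus K$ for suitable $K$, or by invoking that hyperrigidity is independent of the chosen faithful non-degenerate representation) and apply ucp hyperrigidity to the sequence $\psi_n$.

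Having $\lim_n\|\psi_n((\Phi\oplus0)(g))-(\Phi\oplus0)(g)\|=0$ for $g\in\G$ from the construction of $\psi_n$, ucp hyperrigidity gives $\lim_n\|\psi_n((\Phi\oplus0)(c))-(\Phi\oplus0)(c)\|=0$ for all $c\in\ca(\G)$. Compressing with $P_H$ and using $P_H\psi_n(S)P_H=\phi_n(P_HSP_H)$ together with $P_H(\Phi\oplus0)(c)P_H=\Phi(c)$, we recover $\lim_n\|\phi_n(\Phi(c))-\Phi(c)\|=0$ for all $c\in\ca(\G)$, which is ccp hyperrigidity. The main obstacle — and the place where I expect to spend the most care — is the bookkeeping around non-degeneracy and faithfulness when passing to $H'$: one needs the inflated representation to still be a legitimate test representation for hyperrigidity, which is why the hypothesis that $\ca(\G)$ is non-unital matters (if $\ca(\G)$ were unital, adjoining $\bC$ would force $\Phi(1_{\ca(\G)})$ to miss the new summand and the compression identities would break). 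I would handle this either by appealing to the standard fact that ucp/ccp hyperrigidity may be checked on \emph{one} faithful non-degenerate representation and then transported, or by directly building $\psi_n$ on $H\oplus\bC$ and noting $\Phi\oplus 0$ is non-degenerate on $\ca(\G)$ precisely because $\ca(\G)$ has no unit to act as identity on the orthogonal line; this last point should be spelled out carefully.
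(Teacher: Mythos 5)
The easy direction is fine, and the guiding idea for the hard direction (use non-unitality of $\ca(\G)$ to turn the ccp maps into ucp maps and then invoke ucp hyperrigidity) is the right one. But the execution has a genuine gap. You propose to pass to $H' = H \oplus \bC$ and test ucp hyperrigidity against the representation $\Phi \oplus 0$. This representation is \emph{never} non-degenerate: $[(\Phi\oplus 0)(\ca(\G))(H\oplus\bC)] = [\Phi(\ca(\G))H]\oplus 0 = H \oplus 0 \subsetneq H\oplus\bC$, irrespective of whether $\ca(\G)$ has a unit. Your closing claim that ``$\Phi\oplus 0$ is non-degenerate on $\ca(\G)$ precisely because $\ca(\G)$ has no unit to act as identity on the orthogonal line'' is false; non-unitality does not rescue non-degeneracy here. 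Since the definition of ucp hyperrigidity quantifies only over faithful \emph{non-degenerate} representations, the hypothesis simply does not apply to $\Phi\oplus 0$, and the alternative escape routes you sketch (``hyperrigidity may be checked on one faithful non-degenerate representation and transported'') are not available: extending the conclusion from non-degenerate to degenerate faithful representations is itself a nontrivial issue, which elsewhere in this paper requires the separating hypothesis (Lemma \ref{L:sepdecomp}, Proposition \ref{P:sephyp}) that is not assumed here.

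The repair is to not enlarge the Hilbert space at all. Since $\ca(\G)$ is not unital and $\Phi$ is faithful and non-degenerate, $I_H \notin \Phi(\ca(\G))$, so $\Phi(\ca(\G)) + \bC I_H$ is a genuine unitisation sitting inside $\B(H)$. By \cite[Proposition 2.2.1]{BO08} the ccp map $\phi_n|_{\Phi(\ca(\G))}$ extends to a ucp map on $\Phi(\ca(\G)) + \bC I_H$ sending $I_H \mapsto I_H$, and Arveson's Extension Theorem then produces a ucp map $\psi_n \colon \B(H) \to \B(H)$ that agrees with $\phi_n$ on all of $\Phi(\ca(\G))$. Applying ucp hyperrigidity to the same $\Phi$ and the sequence $(\psi_n)_n$, and using $\psi_n(\Phi(c)) = \phi_n(\Phi(c))$ for every $c \in \ca(\G)$, gives the conclusion directly, with no degenerate representations and no compression bookkeeping. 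This is exactly where the non-unitality hypothesis enters: it guarantees $I_H \notin \Phi(\ca(\G))$, which is what makes the unitisation of the restricted map possible on the original space.
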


\begin{proof}
It suffices to show that, if $\G$ is ucp hyperrigid, then it is ccp hyperrigid.
Towards this end, let $\Phi \colon \ca(\G) \to \B(H)$ be a faithful non-degenerate $*$-representation and let $\phi_n \colon \B(H) \to \B(H)$ be completely contractive completely positive maps such that
\[
\lim_n \|\phi_n(\Phi(g)) - \Phi(g)\| = 0 \foral g \in \G.
\]
Since $\ca(\G)$ is not unital, we have that $\Phi(\ca(\G))$ is not unital.
Consider the completely contractive completely positive maps $\phi_n|_{\Phi(\ca(\G))}$ and their unital completely positive extensions 
\[
\left(\phi_n|_{\Phi(\ca(\G))}\right)^{\#} \colon \Phi(\ca(\G))^{\#} := \Phi(\ca(\G)) + \bC I_H \to \B(H),
\]
by \cite[Proposition 2.2.1]{BO08}.
By Arveson's Extension Theorem, for each $n$ let $\psi_n \colon \B(H) \to \B(H)$ be a unital completely positive extension.
Therefore for every $g \in \ca(\G)$ we get
\[
\lim_n \psi_n(\Phi(g)) = \lim_n \phi_n(\Phi(g)) = \Phi(g).
\]
Since $\G$ is ucp hyperrigid we conclude 
\[
\lim_n \phi_n(\Phi(c)) = \lim_n \psi_n(\Phi(c)) = \Phi(c)
\foral
c \in \ca(\G),
\]
as required.
\end{proof}

In Theorem \ref{T:hypiffmaxun} we will show that the two notions of hyperrigidity coincide when $\ca(\G)$ is unital, thus covering all cases. 
For that, we will need to connect ucp hyperrigidity (resp.\ ccp hyperrigidity) with the ucp unique extension property (resp.\ ccp unique extension property).

Let $\Phi \colon \ca(\G) \to \B(H)$ be a $*$-representation. 
We say that $\Phi$ has \emph{the ucp unique extension property with respect to $\G$} if for any faithful non-degenerate $*$-representation $j\colon \ca(\G) \to \B(K)$ and any unital completely positive map $\Psi \colon \B(K) \to \B(H)$ the following holds:
\[
\Psi \circ j(g) = \Phi(g) \foral g \in \G \Rightarrow \Psi \circ j(c) = \Phi(c) \foral c \in \ca(\G).
\]
Following \cite[Definition 2.3]{Sal17}, we say that $\Phi$ has \emph{the ccp unique extension property with respect to $\G$} if for any faithful non-degenerate $*$-representation $j\colon \ca(\G) \to \B(K)$ and any completely contractive completely positive map $\Psi \colon \B(K) \to \B(H)$ the following holds:
\[
\Psi \circ j(g) = \Phi(g) \foral g \in \G \Rightarrow \Psi \circ j(c) = \Phi(c) \foral c \in \ca(\G).
\]
Note that the distinction between the ucp and the ccp unique extension property does not appear when $\G$ contains a unit of $\ca(\G)$, as trivially the ccp extensions must be unital due to non-degeneracy.
Hence this difference does not appear for operator systems.

The following remark is a standard trick.

\begin{remark}\label{R:faithful}
Suppose that every faithful non-degenerate $*$-representation of $\ca(\G)$ has the ucp unique extension property (resp.\ ccp unique extension property) with respect to $\G$.
We claim that every non-degenerate $*$-representation of $\ca(\G)$ has the ucp unique extension property (resp.\ ccp unique extension property) with respect to $\G$. 

Towards this end, suppose that $\Phi \colon \ca(\G) \to \B(H)$ is a non-degenerate $*$-representation, and take $j\colon \ca(\G) \to \B(K)$ be a faithful non-degenerate $*$-representation.
Let $\Psi \colon \B(K) \to \B(H)$ be a unital (resp.\ completely contractive) completely positive map  with $\Psi \circ j|_\G = \Phi|_\G$. 
We will show that $\Psi \circ j = \Phi$.
So let $\Phi' \colon \ca(\G)\to \B(H')$ be a faithful non-degenerate $*$-representation, and consider the faithful non-degenerate $*$-representations 
\[
\wt{\Phi} := \Phi \oplus \Phi'
\qand
\wt{j} := j \oplus \Phi'.
\]
By Arveson's Extension Theorem, let $\wt{\Psi}$ be a unital (resp.\ completely contractive) completely positive extension of $\Psi \oplus \id_{H'}$ on $\B(K \oplus H')$.
We then have 
\[
\wt{\Psi} \circ \wt{j}|_\G = \Phi|_{\G} \oplus \Phi'|_\G = \wt{\Phi}|_\G.
\]
Since $\wt{\Phi}$ has the ucp unique extension property (resp.\ ccp unique extension property) with respect to $\G$, we get $\wt{\Psi} \circ \wt{j} = \wt{\Phi}$, and thus $\Psi \circ j =\Phi$ as required.
\end{remark} 

We have that ucp hyperrigidity (resp.\ ccp hyperrigidity) implies the ucp unique extension property (resp.\ ccp unique extension property) for any non-degenerate $*$-representation. 
This was shown in \cite[Theorem 2.1]{Arv11} when $\G$ is an operator system, and in \cite[Theorem 2.8]{Sal17} when both $\G$ and $\ca(\G)$ are unital, or when both $\G$ and $\ca(\G)$ are non-unital. 
We now give a proof without any assumptions on $\G$ or $\ca(\G)$. 

\begin{proposition}\label{P:hypuep}
Let $\G \subseteq \ca(\G)$ be a generating set.
If $\G$ is ucp hyperrigid (resp.\ ccp hyperrigid), then every non-degenerate $*$-representation of $\ca(\G)$ has the ucp unique extension property (resp.\ ccp unique extension property) with respect to $\G$.
\end{proposition}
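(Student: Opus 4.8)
The plan is to derive the (ucp, resp.\ ccp) unique extension property from hyperrigidity by a constant-sequence trick: out of the hypothetical extension I will build a single unital (resp.\ completely contractive) completely positive map that fixes the generators, and then let hyperrigidity promote this to fixing all of $\ca(\G)$.

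So suppose $\Phi\colon\ca(\G)\to\B(H)$ is a non-degenerate $*$-representation, $j\colon\ca(\G)\to\B(K)$ is a faithful non-degenerate $*$-representation, and $\Psi\colon\B(K)\to\B(H)$ is unital (resp.\ completely contractive) completely positive with $\Psi\circ j(g)=\Phi(g)$ for all $g\in\G$; I must show $\Psi\circ j=\Phi$. First I would pass to the $*$-representation $\rho:=\Phi\oplus j$ of $\ca(\G)$ on $H\oplus K$, which is non-degenerate and, crucially, \emph{faithful} --- faithfulness of $j$ alone already forces this, so no appeal to Remark \ref{R:faithful} is needed here. On $\B(H\oplus K)$ I would define
\[
\phi\colon\B(H\oplus K)\to\B(H\oplus K),\qquad \phi(T)=\Psi(E(T))\oplus E(T),\ \text{ where } E(T):=P_K\,T|_K\in\B(K).
\]
Here $E$ is the compression to the $K$-corner, which is unital completely positive, and $S\mapsto\Psi(S)\oplus S$ is completely positive (a direct sum of completely positive maps, composed with the block-diagonal embedding into $\B(H\oplus K)$); in the ucp case $S\mapsto\Psi(S)\oplus S$ is unital because $I_K\mapsto I_H\oplus I_K$, and in the ccp case it is completely contractive because $\|[\Psi(S_{ij})\oplus S_{ij}]\|=\max\{\|[\Psi(S_{ij})]\|,\|[S_{ij}]\|\}\le\|[S_{ij}]\|$ for $[S_{ij}]\in M_m(\B(K))$. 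Hence $\phi$ is unital (resp.\ completely contractive) completely positive.

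The key computation is that for every $g\in\G$, since $E(\rho(g))=j(g)$ and $\Psi(j(g))=\Phi(g)$,
\[
\phi(\rho(g))=\Psi(j(g))\oplus j(g)=\Phi(g)\oplus j(g)=\rho(g).
\]
I would then apply ucp (resp.\ ccp) hyperrigidity of $\G$ to the faithful non-degenerate representation $\rho$ together with the \emph{constant} sequence $\phi_n:=\phi$: the hypothesis $\|\phi_n(\rho(g))-\rho(g)\|=0$ holds for all $g\in\G$ by the display above, so hyperrigidity yields $\|\phi(\rho(c))-\rho(c)\|=0$ for all $c\in\ca(\G)$. Since $E(\rho(c))=j(c)$, this reads $\Psi(j(c))\oplus j(c)=\Phi(c)\oplus j(c)$, and comparing the $H$-components gives $\Psi\circ j(c)=\Phi(c)$ for all $c\in\ca(\G)$, which is exactly the desired unique extension property; letting $j,\Psi$ vary shows $\Phi$ has the property, and letting $\Phi$ vary over all non-degenerate $*$-representations completes the proof.

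I do not anticipate a genuine obstacle: the substance of the argument is just the observation that hyperrigidity --- being a statement about sequences of maps --- already subsumes the corresponding statement about a single map, once one passes to the direct sum $\Phi\oplus j$ so that $\Psi$ can be compared against the identity on the range of $\rho$. The only points requiring a little care are (a) confirming that the auxiliary map $\phi$ belongs to the correct class (unital when $\Psi$ is unital; merely completely contractive, without inadvertently invoking unitality, when $\Psi$ is ccp), and (b) recording that $\rho$ is faithful and non-degenerate so that hyperrigidity is applicable; both are routine.
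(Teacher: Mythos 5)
Your proof is correct, but it takes a genuinely different route from the paper's. The paper first invokes Remark \ref{R:faithful} to reduce to the case where $\Phi$ itself is faithful, then splits into two cases according to whether $\ca(\G)$ is unital, and in each case uses Arveson's Extension Theorem (plus, in the non-unital case, the canonical unitisation of a ccp map on a non-unital C*-algebra) to manufacture a single map $\wt{\Psi}\colon\B(H)\to\B(H)$ agreeing with $\Psi\circ j\circ\Phi^{-1}$ on $\Phi(\ca(\G))$, to which the constant-sequence trick is applied. You instead pass to $\rho=\Phi\oplus j$ on $H\oplus K$, which is automatically faithful and non-degenerate, and write down the explicit globally defined map $\phi(T)=\Psi(E(T))\oplus E(T)$; this sidesteps the reduction to faithful $\Phi$, the unital/non-unital case distinction, and any appeal to Arveson's Extension Theorem. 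Your verifications that $\phi$ is ucp (resp.\ ccp) are sound --- the only point worth being explicit about is that $S\mapsto\Psi(S)\oplus S$ is the composition of $\Psi\oplus\id_{\B(K)}$ with the block-diagonal $*$-embedding $\B(H)\oplus\B(K)\hookrightarrow\B(H\oplus K)$, which you essentially say. What the paper's route buys is that the same extension-plus-constant-sequence template recurs verbatim in Theorems \ref{T:hypiffuepun} and \ref{T:hypiffuep}; what yours buys is a shorter, case-free, and more self-contained argument for this proposition.
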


\begin{proof}
Let $\Phi \colon \ca(\G)\to \B(H)$ be a non-degenerate $*$-representation and $j\colon \ca(\G) \to \B(K)$ be a faithful non-degenerate $*$-representation.
By Remark \ref{R:faithful} we may suppose that $\Phi$ is faithful. 
Let $\Psi\colon \B(K)\to \B(H)$ be a unital (resp. completely contractive) completely positive map such that $\Psi\circ j|_{\G} = \Phi|_{\G}$. 
We consider two cases.

\smallskip

\noindent{\bf Case 1.}
If $\ca(\G)$ is unital, then $\Phi$ and $j$ are unital, being non-degenerate. 
By Arveson's Extension Theorem, there exists a unital (resp.\ completely contractive) completely positive map $\wt{\Psi}\colon \B(H)\to \B(H)$ extending the unital (resp.\ completely contractive) completely positive map 
\[
\Psi\circ j\circ \Phi^{-1} \colon \Phi(\ca(\G)) \to \B(H).
\]
Set $\phi_n:=\wt{\Psi}$ for every $n$, and note that 
\[
\lim_n \|\phi_n(\Phi(g)) - \Phi(g)\|=\|\Psi\circ j(g)-\Phi(g)\| = 0\foral g\in \G.
\]
Then ucp hyperrigidity (resp.\ ccp hyperrigidity) of $\G$ implies the required
\[
\|\Psi\circ j(c)-\Phi(c)\|=\lim_n \|\phi_n(\Phi(c)) - \Phi(c)\| = 0\foral c\in\ca(\G).
\]

\smallskip

\noindent{\bf Case 2.}
If $\ca(\G)$ is not unital, then $\Phi(\ca(\G))$ is not unital since $\Phi$ is faithful and non-degenerate.
Consider the completely contractive completely positive map 
\[
\Psi\circ j\circ \Phi^{-1} \colon \Phi(\ca(\G)) \to \B(H),
\] 
and take its unital completely positive extension 
\[
(\Psi \circ j \circ \Phi^{-1})^\#\colon \Phi(\ca(\G))^\# \to \B(H)
\]
obtained by \cite[Proposition 2.2.1]{BO08}.
By Arveson's Extension Theorem, let $\wt{\Psi} \colon \B(H) \to \B(H)$ be a unital completely positive map that extends $(\Psi \circ j \circ \Phi^{-1})^\#$ and set $\phi_n:=\wt{\Psi}$ for every $n$. 
Note that for any $c\in\ca(\G)$ we have
\[
\phi_n(\Phi(c))
= (\Psi \circ j \circ \Phi^{-1})^\#(\Phi(c))
= (\Psi \circ j \circ \Phi^{-1})(\Phi(c))
= \Psi\circ j(c),
\]
and hence the rest of the proof works as in the unital case above.
\end{proof}

The connection of the ucp unique extension property (resp.\ ccp unique extension property) with maximality is straightforward and follows standard dilation theory arguments.

\begin{definition} \label{D:max}
Let $\G\subseteq \ca(\G)$ be a generating set and let $\Phi\colon \ca(\G)\to \B(H)$ be a $*$-homomor\-phism. 
We say that $\Phi$ is \emph{maximal on $\G$} if for every $*$-homomorphism $\Phi'\colon \ca(\G)\to \B(H')$ with $H \subseteq H'$ we have that:
\[
\Phi(g) = P_H \Phi'(g) |_H \foral g \in \G
\Longrightarrow
\Phi(c) = \Phi'(c) |_H \foral c \in \ca(\G).
\]
\end{definition}

We should note that if $\Phi$ and $\Phi'$ are as above, then since $\G$ is a generating set, and $\Phi$ and $\Phi'$ are $*$-representations, we get $\Phi'=\Phi\oplus \Phi''$ for a $*$-representation $\Phi''$ of $\ca(\G)$. 

Definition \ref{D:max} is compatible with maximality of maps.
If $\fS$ is an operator system, then by a use of Arveson's Extension Theorem and Stinespring's Dilation Theorem, we obtain that $\Phi$ is maximal on $\fS$ if and only $\Phi|_\fS$ is a maximal unital completely positive map, i.e., it does not admit non-trivial dilations by unital completely positive maps. 
This is also the case for operator algebras. 
In particular, if $\fA$ is a non-unital operator algebra, then by a use of Meyer's Unitisation Theorem and then \cite[Corollary 7.7]{Pau02} we get that $\Phi$ is maximal on $\fA$ if and only $\Phi|_\fA$ is a maximal representation, i.e., it does not admit non-trivial dilations by completely contractive representations of $\fA$.

\begin{lemma} \label{L:iffmaxspace}
Let $\G\subseteq \ca(\G)$ be a generating set and let $\Phi\colon \ca(\G)\to \B(H)$ be a $*$-homomor\-phism. 
The following are equivalent:
\begin{enumerate}
\item $\Phi$ is maximal on $\G$.
\item $\Phi$ is maximal on $\ol{\spn}\{g \mid g \in \G\}$.
\item $\Phi$ is maximal on $\ol{\spn}\{g, g^* \mid g \in \G\}$.
\end{enumerate}
\end{lemma}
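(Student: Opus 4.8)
The plan is to reduce all three maximality conditions to a single condition attached to a fixed pair of $*$-homomorphisms. Write $\V:=\ol{\spn}\{g\mid g\in\G\}$ and $\fS:=\ol{\spn}\{g,g^*\mid g\in\G\}$, so that $\G\subseteq\V\subseteq\fS\subseteq\ca(\G)$ and each of these three subsets generates $\ca(\G)$ as a C*-algebra. For a $*$-homomorphism $\Phi'\colon\ca(\G)\to\B(H')$ with $H\subseteq H'$, introduce the \emph{agreement set}
\[
\E(\Phi,\Phi'):=\{x\in\ca(\G)\mid \Phi(x)=P_H\,\Phi'(x)\,|_H\}.
\]
Everything will follow from the observation that $\E(\Phi,\Phi')$ is a norm-closed, $*$-closed linear subspace of $\ca(\G)$.

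First I would note that $\E(\Phi,\Phi')$ is a closed subspace, being the locus where the two bounded linear maps $x\mapsto\Phi(x)$ and $x\mapsto P_H\Phi'(x)|_H$ coincide. The one point requiring (minor) care is $*$-invariance: combining the fact that $\Phi$ and $\Phi'$ are $*$-homomorphisms with the elementary identity $(P_HT|_H)^*=P_HT^*|_H$ for $T\in\B(H')$ (immediate from $P_H$ being a self-adjoint projection), one gets for $x\in\E(\Phi,\Phi')$ that
\[
\Phi(x^*)=\Phi(x)^*=\big(P_H\,\Phi'(x)\,|_H\big)^*=P_H\,\Phi'(x)^*\,|_H=P_H\,\Phi'(x^*)\,|_H,
\]
hence $x^*\in\E(\Phi,\Phi')$. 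Consequently $\G\subseteq\E(\Phi,\Phi')$ forces $\G^*\subseteq\E(\Phi,\Phi')$, hence $\spn(\G\cup\G^*)\subseteq\E(\Phi,\Phi')$, and by closedness $\fS\subseteq\E(\Phi,\Phi')$. Since trivially $\fS\subseteq\E(\Phi,\Phi')\Rightarrow\V\subseteq\E(\Phi,\Phi')\Rightarrow\G\subseteq\E(\Phi,\Phi')$, the three inclusions $\G\subseteq\E(\Phi,\Phi')$, $\V\subseteq\E(\Phi,\Phi')$ and $\fS\subseteq\E(\Phi,\Phi')$ are equivalent, for every choice of $H'$ and $\Phi'$.

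Finally I would unwind Definition \ref{D:max}. The assertion that $\Phi$ is maximal on $\G$ (respectively, on $\V$, respectively, on $\fS$) says precisely that for every $*$-homomorphism $\Phi'\colon\ca(\G)\to\B(H')$ with $H\subseteq H'$, the inclusion $\G\subseteq\E(\Phi,\Phi')$ (respectively, $\V\subseteq\E(\Phi,\Phi')$, respectively, $\fS\subseteq\E(\Phi,\Phi')$) implies that $\Phi(c)=\Phi'(c)|_H$ for all $c\in\ca(\G)$. The conclusion of this implication does not depend on which of the three subsets is chosen, and by the previous paragraph neither does its hypothesis; hence the three maximality conditions coincide, and (i), (ii) and (iii) are equivalent.

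I do not expect any genuine obstacle: the argument is entirely formal once the agreement set is isolated, and the only computation involved is the adjoint identity $(P_HT|_H)^*=P_HT^*|_H$. The one point worth stressing is that it is essential here that $\Phi$ and $\Phi'$ be $*$-homomorphisms rather than merely completely positive maps --- this is exactly what makes $\E(\Phi,\Phi')$ invariant under adjoints, which is what permits the passage from $\G$ to $\G\cup\G^*$ and on to its closed linear span.
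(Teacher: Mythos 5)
Your proof is correct and is essentially the paper's argument: the paper disposes of this lemma in one line by noting that compressions are compatible with linear spans, adjoints and norm limits, and your agreement set $\E(\Phi,\Phi')$ is precisely a formalisation of that observation. The adjoint identity $(P_HT|_H)^*=P_HT^*|_H$ and the closedness of $\E(\Phi,\Phi')$ are exactly the points the paper is implicitly invoking, so nothing is missing.
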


\begin{proof}
This is an immediate consequence of the fact that compressions are compatible with linear spans, adjoints and norm limits.
\end{proof}

With an additional property of invariance we get a similar result for the algebra of a generating set.
The following has been implicit in \cite[proof of Claim in Theorem 3.5]{KK12}.

\begin{lemma} \label{L:iffmaxalg}
Let $\G\subseteq \ca(\G)$ be a generating set and let $\Phi\colon \ca(\G)\to \B(H)$ be a $*$-homomor\-phism.
Suppose in addition that for every $*$-homomorphism $\Phi' \colon \ca(\G) \to \B(H')$ satisfying $H \subseteq H'$ and $\Phi(g) = P_H \Phi'(g) |_H$ for all $g \in \G$, we have that $H$ is invariant or co-invariant for $\Phi'(\G)$.
The following are equivalent:
\begin{enumerate}
\item $\Phi$ is maximal on $\G$.
\item $\Phi$ is maximal on $\ol{\alg}\{g \mid g \in \G\}$.
\end{enumerate}
\end{lemma}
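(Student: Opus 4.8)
The plan is to prove the two implications separately, using that the algebra $\ol{\alg}\{g \mid g \in \G\}$ is the closed linear span of words in the generators, together with the invariance/co-invariance hypothesis to control how compressions interact with products.

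\emph{$(ii) \Rightarrow (i)$.} This is the easy direction. Any element of $\G$ lies in $\ol{\alg}\{g \mid g \in \G\}$, so if $\Phi'\colon \ca(\G)\to\B(H')$ with $H\subseteq H'$ satisfies $\Phi(g) = P_H\Phi'(g)|_H$ for all $g\in\G$, then a fortiori $\Phi(a)=P_H\Phi'(a)|_H$ for all $a$ in a generating set of $\ol{\alg}\{g \mid g\in\G\}$; since $\Phi$ is maximal on that algebra, $\Phi(c)=\Phi'(c)|_H$ for all $c\in\ca(\G)$. Thus $\Phi$ is maximal on $\G$.

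\emph{$(i) \Rightarrow (ii)$.} Suppose $\Phi$ is maximal on $\G$, and let $\Phi'\colon\ca(\G)\to\B(H')$ with $H\subseteq H'$ satisfy $\Phi(a)=P_H\Phi'(a)|_H$ for every $a$ in the generating set $\{g\mid g\in\G\}$ of $\ol{\alg}\{g\mid g\in\G\}$ — equivalently, for every $g\in\G$. By the hypothesis, $H$ is invariant or co-invariant for $\Phi'(\G)$. As noted just before the lemma (using that $\Phi,\Phi'$ are $*$-representations and $\G$ generates), in fact $\Phi'=\Phi\oplus\Phi''$ for some $*$-representation $\Phi''$; but the cleanest route is: invoke maximality of $\Phi$ on $\G$ directly. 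Indeed the displayed implication in Definition \ref{D:max} is precisely the statement that $\Phi(g)=P_H\Phi'(g)|_H$ for all $g\in\G$ forces $\Phi(c)=\Phi'(c)|_H$ for all $c\in\ca(\G)$, hence in particular $\Phi(a)=\Phi'(a)|_H$ for all $a\in\ol{\alg}\{g\mid g\in\G\}$. Therefore $\Phi$ is maximal on $\ol{\alg}\{g\mid g\in\G\}$.

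\emph{Where the invariance hypothesis is really used.} The subtlety — and the main obstacle — is that for a genuine algebra one must check that the \emph{definition of maximality on a subspace} is not weaker than what we want: a priori, maximality on $\G$ only controls compressions of the generators $g$, whereas maximality on $\ol{\alg}\{g\mid g\in\G\}$ would seem to require controlling compressions of products $g_1\cdots g_k$ as well. The point is that once $H$ is invariant (resp.\ co-invariant) for $\Phi'(\G)$, the compression map $a\mapsto P_H\Phi'(a)|_H$ is multiplicative on words in $\G$, so $P_H\Phi'(g_1\cdots g_k)|_H = \Phi(g_1)\cdots\Phi(g_k) = \Phi(g_1\cdots g_k)$ automatically; hence no extra hypothesis beyond agreement on $\G$ is needed, and maximality on $\G$ transfers to maximality on the generated algebra. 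I would phrase the write-up so that this multiplicativity-under-(co)invariance observation — a routine consequence of $P_H\Phi'(g)|_H \cdot P_H\Phi'(g')|_H = P_H\Phi'(g)\Phi'(g')|_H$ when $H$ is semi-invariant — is the single computational lemma, and everything else is the two-line inclusion argument above, appealing to Lemma \ref{L:iffmaxspace} to pass freely between $\G$, its span, and (if convenient) the self-adjoint span.
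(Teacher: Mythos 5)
Your key computation --- that invariance (or co-invariance) of $H$ for $\Phi'(\G)$ makes the compression $a \mapsto P_H\Phi'(a)|_H$ multiplicative on words in $\G$, so that agreement on $\G$ upgrades to agreement on all of $\ol{\alg}\{g \mid g \in \G\}$ --- is exactly the engine of the paper's proof. However, you have attached it to the wrong implication, and as a result your argument for [(ii) $\Rightarrow$ (i)] has a genuine gap. With Definition \ref{D:max}, maximality on a \emph{smaller} set is the \emph{stronger} property (a weaker hypothesis already forces the reduction), so [(i) $\Rightarrow$ (ii)] is the trivial direction and needs no invariance at all: agreement under compression on the algebra implies agreement on $\G$, and maximality on $\G$ then yields $\Phi = \Phi'|_H$. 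The direction that needs work is [(ii) $\Rightarrow$ (i)]: there you are handed a $\Phi'$ that agrees with $\Phi$ under compression \emph{only on $\G$}, and before you may invoke maximality on $\ol{\alg}\{g \mid g \in \G\}$ you must verify its hypothesis, namely agreement on the \emph{whole} algebra. Your ``a fortiori \dots for all $a$ in a generating set'' step merely restates agreement on $\G$ and does not license the appeal to maximality on the algebra. The invariance hypothesis is precisely what closes this gap, via $P_H\Phi'(g_1)P_H\Phi'(g_2)|_H = P_H\Phi'(g_1)\Phi'(g_2)|_H$ followed by linearity and continuity; this is the entirety of the paper's proof of [(ii) $\Rightarrow$ (i)].

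The repair is organisational rather than conceptual: move the multiplicativity observation out of your [(i) $\Rightarrow$ (ii)] discussion and into [(ii) $\Rightarrow$ (i)], and drop the claim that it is needed so that ``maximality on $\G$ transfers to maximality on the generated algebra'' --- that transfer is automatic from the definition. One further caution: semi-invariance of $H$ for the \emph{set} $\Phi'(\G)$ would not by itself control words of length three or more; what is actually used is invariance (or, by taking adjoints, co-invariance) of $H$ for $\Phi'(\G)$, as in the lemma's hypothesis.
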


\begin{proof}
We will just show the case where $H$ stays invariant.
The co-invariant case follows by applying the dual arguments on $\G^*$.

If item (i) holds, then item (ii) holds trivially.
For the converse, let $\Phi' \colon \ca(\G) \to \B(H')$ be a $*$-representation with $H \subseteq H'$ and $\Phi(g) = P_H \Phi'(g) |_H$ for all $g \in \G$.
By assumption we have
\begin{align*}
\Phi(g_1 g_2) 
& = \Phi(g_1) \Phi(g_2) 
= P_H \Phi'(g_1) P_H \Phi'(g_2)|_H \\
&=
P_H \Phi'(g_1) \Phi'(g_2)|_H 
= P_H \Phi'(g_1 g_2)|_H,
\end{align*}
for all $g_1, g_2 \in \G$.
Hence 
\[
\Phi|_{\ol{\alg}\{\G\}} = P_H (\Phi'|_{\ol{\alg}\{\G\}}) |_H,
\]
and by item (ii) we have $\Phi = \Phi'|_H$, as required.
\end{proof}

We will need the following remark for future reference.

\begin{proposition} \label{P:a-max}
Let $\C$ be a C*-algebra and let $\fY \subseteq \C$ be a subset.
Let $\Phi \colon \C \to \B(H)$ and $\Phi' \colon \C \to \B(H')$ be $*$-representations with $H \subseteq H'$ such that $\Phi|_{\fY}$ is a direct summand of $\Phi'|_{\fY}$.
If for $c \in \C$ there exists $(c_\la)_\la \subseteq \ca(\fY)$ such that 
\[
\textup{wot-}\lim_\la \Phi(c_\la) = \Phi(c)
\qand
\textup{wot-}\lim_\la \Phi'(c_\la) = \Phi'(c),
\]
then $\Phi(c)$ is direct summand of $\Phi'(c)$.
\end{proposition}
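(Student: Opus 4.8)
The plan is to reduce the statement to two elementary observations: that being a subrepresentation passes from a generating subset to the C*-algebra it generates, and that commutation with a \emph{fixed} projection — together with agreement of compressions on a subspace — survives a \textsc{wot}-limit. No results beyond basic C*-theory are needed.

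First I would write $P := P_H \in \B(H')$ for the orthogonal projection onto $H$. The hypothesis that $\Phi|_{\fY}$ is a direct summand of $\Phi'|_{\fY}$ says exactly that $P$ commutes with $\Phi'(y)$ and $\Phi'(y)|_H = \Phi(y)$ for each $y \in \fY$. The set $\{a \in \C : P\Phi'(a) = \Phi'(a)P\}$ is a norm-closed $*$-subalgebra of $\C$: closure under sums, scalars, and products is immediate; closure under adjoints holds because $\Phi'$ is a $*$-homomorphism and $P = P^*$; and norm-closedness follows since $\Phi'$ is contractive. As this set contains $\fY$, it contains $\ca(\fY)$, so $H$ reduces $\Phi'(a)$ for every $a \in \ca(\fY)$ and $\Theta(a) := \Phi'(a)|_H$ defines a $*$-representation $\Theta \colon \ca(\fY) \to \B(H)$. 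Now $\Theta$ and $\Phi$ are $*$-representations of $\ca(\fY)$ agreeing on the generating set $\fY$, hence they agree on the $*$-algebra generated by $\fY$ and, by continuity, on all of $\ca(\fY)$. Thus $\Phi|_{\ca(\fY)}$ is a direct summand of $\Phi'|_{\ca(\fY)}$; in particular $P\Phi'(c_\la) = \Phi'(c_\la)P$ and $\Phi'(c_\la)|_H = \Phi(c_\la)$ for every index $\la$.

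Then I would push both conclusions through the limit. For commutation, fix $\xi,\eta \in H'$; using $\textup{wot-}\lim_\la \Phi'(c_\la) = \Phi'(c)$ twice together with $P = P^*$,
\begin{align*}
\ip{\Phi'(c)P\xi, \eta}
&= \lim_\la \ip{\Phi'(c_\la)P\xi, \eta}
= \lim_\la \ip{P\Phi'(c_\la)\xi, \eta} \\
&= \lim_\la \ip{\Phi'(c_\la)\xi, P\eta}
= \ip{\Phi'(c)\xi, P\eta}
= \ip{P\Phi'(c)\xi, \eta},
\end{align*}
so $P\Phi'(c) = \Phi'(c)P$, i.e.\ $H$ reduces $\Phi'(c)$. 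For the compression, fix $\xi,\eta \in H$; since $\Phi'(c_\la)\xi = \Phi(c_\la)\xi \in H$ and $\textup{wot-}\lim_\la \Phi(c_\la) = \Phi(c)$,
\[
\ip{\Phi'(c)\xi, \eta} = \lim_\la \ip{\Phi'(c_\la)\xi, \eta} = \lim_\la \ip{\Phi(c_\la)\xi, \eta} = \ip{\Phi(c)\xi, \eta}.
\]
Combined with the fact that $H$ reduces $\Phi'(c)$, this gives $\Phi'(c)|_H = \Phi(c)$, hence $\Phi'(c) = \Phi(c) \oplus \Phi'(c)|_{H^\perp}$, which is precisely the assertion that $\Phi(c)$ is a direct summand of $\Phi'(c)$.

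The argument is essentially routine; the one point requiring care — and the main (if mild) obstacle — is the passage from $\fY$ to $\ca(\fY)$ in the first step. Since $\fY$ is not assumed selfadjoint, one must genuinely use that commutation with the selfadjoint projection $P$ is preserved under adjoints, so that $P$ ends up commuting with all of $\Phi'(\ca(\fY))$, and that a $*$-representation of $\ca(\fY)$ is determined by its restriction to $\fY$. Everything else is just the compatibility of \textsc{wot}-limits with a single fixed projection, which is exactly why the hypothesis is stated with a net in $\ca(\fY)$ converging to $c$ in \textsc{wot} under \emph{both} representations simultaneously.
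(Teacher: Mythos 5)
Your proof is correct and follows essentially the same route as the paper: first upgrade the direct-summand hypothesis from $\fY$ to $\ca(\fY)$ (the paper states this in one line, you spell out the $*$-subalgebra argument), then pass through the \textsc{wot}-limit, which the paper does in a single block-matrix computation whose two ingredients — preservation of the zero off-diagonal entries and convergence of the $(1,1)$-corner to $\Phi(c)$ — are exactly your two displayed limit arguments. No gaps; your version is simply a more detailed writing of the paper's proof.
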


\begin{proof}
Since both $\Phi$ and $\Phi'$ are $*$-representations agreeing on the set $\fY$, we have that $\Phi|_{\ca(\fY)}$ is a direct summand of $\Phi'|_{\ca(\fY)}$.
By assumption we have
\[
\Phi'(c)
= \text{wot-}\lim_\la \Phi'(c_\la)
= \text{wot-}\lim_\la \begin{bmatrix} \Phi(c_\la) & 0 \\ 0 & \ast \end{bmatrix}
= \begin{bmatrix} \Phi(c) & 0 \\ 0 & \ast \end{bmatrix}.
\]
This shows that $\Phi(c)$ is a direct summand of $\Phi'(c)$, as required.
\end{proof}

We now show that maximality, the ucp unique extension property and the ccp unique extension property coincide. 
This was shown in \cite[Proposition 2.4]{Arv08} for operator systems and in \cite[Proposition 2.4]{DS18} for operator algebras. Both cases are adaptations of the proof of \cite[Theorem 2]{MS98}. 
With a use of Proposition \ref{P:Dil} the result holds for any generating set.

\begin{proposition}\label{P:uepiffmax}
Let $\G\subseteq \ca(\G)$ be a generating set and $\Phi \colon \ca(\G) \to \B(H)$ be a non-degenerate $*$-representation. 
The following are equivalent:
\begin{enumerate}
\item $\Phi$ has the ccp unique extension property with respect to $\G$.
\item $\Phi$ has the ucp unique extension property with respect to $\G$.
\item $\Phi$ is maximal on $\G$.
\end{enumerate}
\end{proposition}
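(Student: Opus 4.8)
The plan is to run the cycle $(i)\Rightarrow(ii)\Rightarrow(iii)\Rightarrow(i)$, adapting the standard dilation-theoretic argument of \cite{MS98}, \cite{Arv08}, \cite{DS18} to the possibly non-unital, possibly non-selfadjoint generating set $\G$ by using the corner dilations supplied by Proposition \ref{P:Dil}. The implication $(i)\Rightarrow(ii)$ is immediate, since every unital completely positive map is in particular completely contractive and completely positive, so the conclusion guaranteed by the ccp unique extension property already covers the unital maps relevant to the ucp unique extension property.

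For $(iii)\Rightarrow(i)$, let $j\colon\ca(\G)\to\B(K)$ be a faithful non-degenerate $*$-representation and $\Psi\colon\B(K)\to\B(H)$ a completely contractive completely positive map with $\Psi\circ j(g)=\Phi(g)$ for all $g\in\G$; I want $\Psi\circ j=\Phi$. The composite $\Psi\circ j\colon\ca(\G)\to\B(H)$ is completely contractive and completely positive, so Proposition \ref{P:Dil} furnishes a Hilbert space $H'\supseteq H$ and a $*$-representation $\rho\colon\ca(\G)\to\B(H')$ with $\Psi\circ j=P_H\rho(\cdot)|_H$. Restricting to $\G$ gives $P_H\rho(g)|_H=\Phi(g)$ for all $g\in\G$, so $\rho$ is a dilation of $\Phi$ in the sense of Definition \ref{D:max}, and maximality of $\Phi$ on $\G$ forces $\Phi(c)=\rho(c)|_H$ for every $c\in\ca(\G)$. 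In particular $\rho(c)$ leaves $H$ invariant, whence $\Psi\circ j(c)=P_H\rho(c)|_H=\rho(c)|_H=\Phi(c)$, as required.

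For $(ii)\Rightarrow(iii)$, let $\Phi'\colon\ca(\G)\to\B(H')$ be a $*$-representation with $H\subseteq H'$ and $\Phi(g)=P_H\Phi'(g)|_H$ for all $g\in\G$; I want $\Phi(c)=\Phi'(c)|_H$ for all $c$. Fix a faithful non-degenerate $*$-representation $j\colon\ca(\G)\to\B(K)$. The compression $\vth:=P_H\Phi'(\cdot)|_H\colon\ca(\G)\to\B(H)$ is completely contractive and completely positive; transporting it to $j(\ca(\G))$ via $j^{-1}$, passing to a unital completely positive extension following the convention of Remark \ref{R:c* unit}, and invoking Arveson's Extension Theorem, I obtain a unital completely positive $\Psi\colon\B(K)\to\B(H)$ with $\Psi\circ j=\vth=P_H\Phi'(\cdot)|_H$ on $\ca(\G)$. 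Then $\Psi\circ j(g)=\Phi(g)$ for $g\in\G$, so the ucp unique extension property yields $\Psi\circ j=\Phi$, i.e.\ $P_H\Phi'(c)|_H=\Phi(c)$ for all $c\in\ca(\G)$. A direct computation then finishes: for $c\in\ca(\G)$ and $h\in H$,
\[
\|\Phi'(c)h\|^2=\langle\Phi'(c^*c)h,h\rangle=\langle\Phi(c^*c)h,h\rangle=\|\Phi(c)h\|^2=\|P_H\Phi'(c)h\|^2,
\]
so $(I_{H'}-P_H)\Phi'(c)h=0$; since $\ca(\G)$ is $*$-closed this makes $H$ reducing for $\Phi'$, and hence $\Phi'(c)|_H=P_H\Phi'(c)|_H=\Phi(c)$, which is maximality on $\G$.

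The genuinely routine ingredients are the appeals to Arveson's Extension Theorem, to Stinespring's Theorem (packaged as Proposition \ref{P:Dil}), and to the closing C*-identity, which here is elementary because both $\Phi'$ and the dilation $\rho$ produced by Proposition \ref{P:Dil} are honest $*$-representations. The step I expect to require the most care is the passage in $(ii)\Rightarrow(iii)$ from the completely contractive completely positive compression $\vth$ to a genuinely \emph{unital} completely positive $\Psi$: when $\ca(\G)$ fails to be unital (or $\vth$ fails to be unital) one cannot simply extend $\vth$ and must first unitise, and it is exactly the Werner-type conventions collected in Remark \ref{R:c* unit} that make this go through; symmetrically, in $(iii)\Rightarrow(i)$ it is precisely Proposition \ref{P:Dil}, rather than a bare Stinespring conjugation, that delivers a dilation of the shape demanded by Definition \ref{D:max}.
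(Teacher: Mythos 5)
Your proof is correct and follows essentially the same route as the paper's: $(i)\Rightarrow(ii)$ trivially, $(ii)\Rightarrow(iii)$ by Arveson-extending the transported compression (unitising when necessary) and invoking the ucp unique extension property, and $(iii)\Rightarrow(i)$ via the corner dilation of Proposition \ref{P:Dil}. The one point worth adding is that in $(ii)\Rightarrow(iii)$ you should, as the paper does, first replace $\Phi'$ by its non-degenerate compression, so that when $\ca(\G)$ is unital the map $\vth$ is genuinely unital and you are not forced to unitise against the non-degenerate faithful representation $j$ required by the definition of the unique extension property.
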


\begin{proof}
The implication [(i) $\Rightarrow$ (ii)] is immediate.
For the implication [(ii) $\Rightarrow$ (iii)], suppose that $\Phi$ has the ucp unique extension property with respect to $\G$, and consider a $*$-representation $\Phi' \colon \ca(\G)\to B(H')$ such that $H\subseteq H'$ and 
\[
\Phi(g)=P_H\Phi'(g)|_H \foral g\in \G.
\]
Without loss of generality we assume that $\Phi'$ is also non-degenerate.
Let also $j \colon \ca(\G) \to \B(K)$ be a faithful non-degenerate $*$-representation and set $\phi' := P_H \Phi'|_H$.
We consider two cases.

\smallskip

\noindent{\bf Case 1.}
If $\ca(\G)$ is unital, then all maps above are unital.
Consider the unital completely positive map $\Psi_0 := \phi' \circ j^{-1}$.
By Arveson's Extension Theorem, let $\Psi$ be a unital completely positive extension of $\Psi_0$ on $\B(K)$, for which we get 
\[
\Psi \circ j|_\G = \phi'|_{\G} = \Phi|_{\G}.
\]
Since $\Phi$ has the ucp unique extension property with respect to $\G$, we conclude $\phi' = \Psi \circ j = \Phi$, and thus $\phi'$ is multiplicative.
Hence $H$ is reducing for $\Phi'$, i.e., $\Phi$ is a direct summand of $\Phi'$.

\smallskip

\noindent{\bf Case 2.}
If $\ca(\G)$ is not unital, then we have $I_K \notin j(\ca(\G))$, and hence the unitisation map $j^\#\colon \ca(\G)^\#\to \B(K)$ is faithful.
By \cite[Proposition 2.2.1]{BO08} we obtain a unital completely positive extension 
\[
(\phi')^{\#}\colon \ca(\G)^\#\to \B(H)
\]
of the completely contractive completely positive map $\phi':=P_H\Phi'|_H$.
By Arveson's Extension Theorem, let $\Psi$ be a unital completely positive extension of $(\phi')^{\#} \circ (j^{\#})^{-1}$. 
For $g\in \G$ we have
\begin{align*}
\Psi\circ j(g) 
& = (\phi')^\#\circ (j^\#)^{-1}\circ j(g)
= (\phi')^\#\circ j^{-1}\circ j(g) \\
& = (\phi')^\#(g)
= \phi'(g)
= P_H\Phi'(g)|_H
= \Phi(g).
\end{align*}
Since $\Phi$ has the ucp unique extension property with respect to $\G$, we conclude $\phi' = \Psi \circ j = \Phi$, and thus $\Phi$ is a direct summand of $\Phi'$.

\smallskip

It remains to prove the implication [(iii) $\Rightarrow$ (i)].
Towards this end, suppose that $\Phi$ is maximal on $\G$.
Let $j\colon \ca(\G)\to \B(K)$ be a faithful non-degenerate $*$-representation, and let $\Psi\colon \B(K)\to \B(H)$ be a completely contractive completely positive map such that $\Psi\circ j|_{\G}=\Phi|_{\G}$. 
By Proposition \ref{P:Dil} there exist a Hilbert space $H\subseteq H'$ and a $*$-representation $\Phi'\colon \ca(\G)\to \B(H')$ such that $\Psi\circ j = P_H\Phi' |_H$.
We have 
\[
\Phi(g)=\Psi\circ j(g)=P_H \Phi'(g)|_H \foral g\in \G,
\]
and thus maximality of $\Phi$ on $\G$ implies that $H$ is reducing for $\Phi'$.
In particular, we have $\Phi = P_H \Phi' |_H = \Psi\circ j$, and the proof is complete.
\end{proof}

\begin{remark}\label{R:uepiffmax}
We note for future reference that the proof of the implication [(iii) $\Rightarrow$ (i)] does not require $\Phi$ to be non-degenerate.
\end{remark}

\subsection{Unital generated C*-algebra}

In this subsection we prove that ucp hyperrigidity (resp.\ ccp hyperrigidity) is equivalent to any unital $*$-representation having the ucp unique extension property (resp.\ ccp unique extension property) when the generated C*-algebra is unital. 
This is done without any further assumptions on the generating set, and as a result by using maximality we obtain that ccp hyperrigidity and ucp hyperrigidity coincide for any generating set.

In \cite[Theorem 2.1]{Arv11} it is shown that the converse of Proposition \ref{P:hypuep} holds when $\G$ is an operator system, and hence contains a unit for $\ca(\G)$.
This holds also when $\ca(\G)$ is unital without assuming that $\G$ contains the unit.

\begin{theorem}\label{T:hypiffuepun}
Let $\G \subseteq \ca(\G)$ be a generating set such that  $\ca(\G)$ is unital.
The following are equivalent:
\begin{enumerate}
\item $\G$ is ucp hyperrigid (resp.\ ccp hyperrigid).
\item Every non-degenerate $*$-representation $\Phi \colon \ca(\G) \to \B(H)$ has the ucp unique extension property  (resp.\ ccp unique extension property) with respect to $\G$. 
\end{enumerate}
\end{theorem}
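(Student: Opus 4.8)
The direction [(i) $\Rightarrow$ (ii)] is already Proposition \ref{P:hypuep}, which requires no hypothesis on $\ca(\G)$ at all, so only the converse [(ii) $\Rightarrow$ (i)] needs attention, and since $\ca(\G)$ is unital the ucp and ccp variants coincide (any completely contractive completely positive map into $\B(H)$ that must be non-degenerate is automatically unital), so I treat them together. The plan is to follow Arveson's argument from \cite[Theorem 2.1]{Arv11}, adapting it to a merely unital $\ca(\G)$ rather than an operator system. Assume (ii). Fix a faithful non-degenerate (hence unital) $*$-representation $\Phi \colon \ca(\G) \to \B(H)$ and a sequence of unital completely positive maps $\phi_n \colon \B(H) \to \B(H)$ with $\lim_n \|\phi_n(\Phi(g)) - \Phi(g)\| = 0$ for all $g \in \G$; the goal is $\lim_n \|\phi_n(\Phi(c)) - \Phi(c)\| = 0$ for all $c \in \ca(\G)$. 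Suppose not: then there is some $c_0 \in \ca(\G)$, an $\eps > 0$, a subsequence (which I relabel as $(\phi_n)_n$), and unit vectors, such that $\|\phi_n(\Phi(c_0)) - \Phi(c_0)\| \geq \eps$ for all $n$.

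The key device is an ultrafilter/ultraproduct construction: let $\omega$ be a free ultrafilter on $\bN$, form the Hilbert space $H_\omega := \ell^\infty(\bN, H)/c_\omega(\bN,H)$ (the ultrapower of $H$ along $\omega$, using $c_\omega$ = sequences vanishing along $\omega$), and let $\Phi_\omega \colon \ca(\G) \to \B(H_\omega)$ be the amplified diagonal representation $\Phi_\omega(c) = [(\Phi(c))_n]$, which is again a non-degenerate $*$-representation. Define $\Psi \colon \B(H) \to \B(H_\omega)$ by $\Psi(T) = [(\phi_n(T))_n]$; this is well-defined because each $\phi_n$ is contractive, and it is unital completely positive. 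On the generating set, the hypothesis $\|\phi_n(\Phi(g)) - \Phi(g)\| \to 0$ gives $\Psi(\Phi(g)) = \Phi_\omega(g)$ for every $g \in \G$. Composing with a faithful non-degenerate $j \colon \ca(\G) \to \B(K)$ and a ucp lift (via Arveson's Extension Theorem applied to $\Psi \circ \Phi \circ j^{-1} \colon j(\ca(\G)) \to \B(H_\omega)$) to a ucp map $\B(K) \to \B(H_\omega)$, the ucp unique extension property of the non-degenerate representation $\Phi_\omega$ with respect to $\G$ forces $\Psi \circ \Phi = \Phi_\omega$ on all of $\ca(\G)$. In particular $[(\phi_n(\Phi(c_0)))_n] = [(\Phi(c_0))_n]$ in $\B(H_\omega)$, i.e., $\lim_{n\to\omega} \|\phi_n(\Phi(c_0)) - \Phi(c_0)\| = 0$. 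Since this holds for every free ultrafilter $\omega$, the ordinary limit is $0$, contradicting $\|\phi_n(\Phi(c_0)) - \Phi(c_0)\| \geq \eps$.

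The step I expect to be the main obstacle is the precise bookkeeping in the ultrapower argument: verifying that $\Psi$ lands in $\B(H_\omega)$ and is genuinely completely positive (one checks this at each matrix level using that $\phi_n^{(k)}$ is positive and the quotient map $\ell^\infty(\bN, M_k(H)) \to M_k(H_\omega)$ is a $*$-homomorphism on the relevant subalgebra), and that $\Phi_\omega$ being non-degenerate is exactly what lets us invoke hypothesis (ii) as stated — this is where unitality of $\ca(\G)$ is used, since then $\Phi_\omega$ is automatically unital and there is no ccp/ucp discrepancy to worry about. A secondary point to be careful about, to keep the statement honest for the ccp variant, is that when $\ca(\G)$ is unital a ccp map on it that is "non-degenerate" (here the relevant maps arise as restrictions of unital maps or as $\Psi \circ j$ with $\Psi$ ccp and $j$ unital non-degenerate) need not a priori be unital; but since $j$ is unital and $\Psi(I_H) \leq I_{H_\omega}$ with $\Psi(I_H) = \Psi(\Phi(1_{\ca(\G)})) = \Phi_\omega(1_{\ca(\G)}) = I_{H_\omega}$ forced on the generating set only if $1 \in \ca(\G)$ is hit — here it is, as $\ca(\G)$ is unital and $\Phi_\omega$ is unital — so $\Psi \circ j$ is unital and the ccp unique extension property applies. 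One should also remark, as the paper does elsewhere, that the passage from "faithful non-degenerate" to "non-degenerate" in hypothesis (ii) is harmless by Remark \ref{R:faithful}. Modulo these checks, the argument is the standard Arveson reformulation and goes through without further difficulty.
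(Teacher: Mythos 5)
Your proof is correct, and its logical skeleton is the same as the paper's: build an auxiliary faithful representation that records the asymptotic behaviour of the sequence $(\phi_n)_n$, package the $\phi_n$ into a single ucp (resp.\ ccp) map agreeing with that representation on $\G$, invoke hypothesis (ii), and read off norm convergence on all of $\ca(\G)$. The only real difference is the technical device: the paper works with the C*-quotient $\ell^\infty(\B(H))/c_0(\B(H))$ composed with a faithful unital representation $\ka$, whereas you use the Hilbert-space ultrapower $H_\omega$ along a free ultrafilter and then quantify over all free ultrafilters to upgrade ultralimits to an honest limit. Both devices do the same job, and you correctly isolate the point where unitality of $\ca(\G)$ enters, namely that the diagonal representation into the asymptotic algebra is automatically unital, hence non-degenerate --- this is exactly the failure mode the paper flags in Remark \ref{R:Salunit} for the non-unital case. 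One small inaccuracy in your side remark: the identity $\Psi(\Phi(1)) = \Phi_\omega(1)$ is not ``forced on the generating set,'' since $1$ need not lie in $\G$; it holds simply because $\Psi$ is unital by construction in the ucp case, and in the ccp case no unitality is needed because the ccp unique extension property is invoked directly. This does not affect the argument. You could also simplify by taking $j = \Phi$ itself rather than an arbitrary faithful non-degenerate representation, as the paper does.
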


\begin{proof}
The implication [(i) $\Rightarrow$ (ii)] follows from Proposition \ref{P:hypuep}, hence it suffices to show the converse.
Let $\Phi \colon \ca(\G) \to \B(H)$ be a faithful non-degenerate (and thus unital) $*$-representation and let $\phi_n \colon \B(H) \to \B(H)$ be unital (resp.\ completely contractive) completely positive maps such that
\[
\lim_n \|\phi_n(\Phi(g)) - \Phi(g)\| = 0 \foral g \in \G.
\]
Consider the inclusion map
\[
\io \colon \B(H) \to \ell^{\infty}(\B(H)); x \mapsto (x, x, \dots)
\]
and the canonical quotient map
\[
q \colon \ell^{\infty}(\B(H)) \to \ell^\infty(\B(H))/c_0(\B(H)).
\]
Let us also fix a faithful unital $*$-representation
\[
\ka \colon \ell^\infty(\B(H))/c_0(\B(H)) \to \B(K).
\]
Define the faithful unital $*$-representation
\[
\wt{\Phi} \colon \ca(\G) \to \B(K); \wt{\Phi}(c) := \ka \circ q \circ \io \circ \Phi(c),
\]
and the unital (resp.\ completely contractive) completely positive map
\[
\wt{\Psi} \colon \wt{\Phi}(\ca(\G)) \to \B(K); \wt{\Psi}_0(\wt{\Phi}(c)) = \ka \circ q ( \phi_1(\Phi(c)), \phi_2(\Phi(c)), \dots ).
\]
By Arveson's Extension Theorem there is a unital (resp.\ completely contractive) completely positive extension of $\wt{\Psi}$ to $\B(K)$ which we denote by the same symbol. 
Since $\wt{\Psi}\circ \wt{\Phi}(g) = \wt{\Phi}(g)$ for any $g\in\G$, by the assumption on the ucp unique extension property (resp.\ ccp unique extension property) for $j=\wt{\Phi}$ we get $\wt{\Psi}\circ \wt{\Phi} = \wt{\Phi}$.
Equivalently, we have
\[
\ka \circ q(\phi_1(\Phi(c)), \phi_2(\Phi(c)), \dots)
=
\ka \circ q(\Phi(c), \Phi(c), \dots)
\foral 
c \in \ca(\G).
\]
Since $\ka$ is faithful we conclude $\lim_n \phi_n(\Phi(c)) = \Phi(c)$ for all $c \in \ca(\G)$, as required.
\end{proof}

The standard way for connecting hyperrigidity with maximality is through the unique extension property. 
This connection provides also the equivalence between the ccp hyperrigidity and the ucp hyperrigidity for a generating set of a unital C*-algebra.

\begin{theorem}\label{T:hypiffmaxun}
Let $\G \subseteq \ca(\G)$ be a generating set such that $\ca(\G)$ is unital.
The following are equivalent:
\begin{enumerate}
\item $\G$ is ccp hyperrigid.
\item $\G$ is ucp hyperrigid.
\item Every non-degenerate $*$-representation $\Phi \colon \ca(\G) \to \B(H)$ is maximal on $\G$.
\end{enumerate}
Moreover we have that $\G$ is ccp/ucp hyperrigid if and only if $\G \cup \{1\}$ is ccp/ucp hyperrigid.
\end{theorem}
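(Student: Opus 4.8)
The plan is to deduce all three equivalences by passing through the unique extension property and maximality, invoking the machinery already assembled, and then obtain the final assertion by a bookkeeping argument about the unit.

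First I would apply Theorem~\ref{T:hypiffuepun} in both of its forms: since $\ca(\G)$ is unital, $\G$ is ucp hyperrigid if and only if every non-degenerate $*$-representation of $\ca(\G)$ has the ucp unique extension property with respect to $\G$, and $\G$ is ccp hyperrigid if and only if every non-degenerate $*$-representation of $\ca(\G)$ has the ccp unique extension property with respect to $\G$. Next, Proposition~\ref{P:uepiffmax} states that for a \emph{single} non-degenerate $*$-representation $\Phi$ the ccp unique extension property, the ucp unique extension property, and maximality on $\G$ are mutually equivalent. Quantifying this over all non-degenerate $*$-representations and combining it with Theorem~\ref{T:hypiffuepun} yields at once that (i), (ii), (iii) are equivalent: (i) $\Leftrightarrow$ [every non-degenerate $\Phi$ has the ccp unique extension property with respect to $\G$] $\Leftrightarrow$ [every non-degenerate $\Phi$ is maximal on $\G$], the last being condition (iii); and symmetrically (ii) $\Leftrightarrow$ [every non-degenerate $\Phi$ has the ucp unique extension property with respect to $\G$] $\Leftrightarrow$ (iii). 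The conceptual point is that maximality does not see the difference between ccp and ucp maps, so routing hyperrigidity through it is exactly what forces the ccp and ucp notions to coincide once $\ca(\G)$ is unital.

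For the final sentence, note that $1 \in \ca(\G)$, so $\ca(\G\cup\{1\}) = \ca(\G)$ and the equivalence (i)--(iii) just proven applies verbatim with $\G$ replaced by $\G\cup\{1\}$; it therefore suffices to check that condition (iii) for $\G$ is equivalent to condition (iii) for $\G\cup\{1\}$. Fix a non-degenerate $*$-representation $\Phi\colon\ca(\G)\to\B(H)$; being non-degenerate over a unital C*-algebra, it is unital. For any $*$-homomorphism $\Phi'\colon\ca(\G)\to\B(H')$ with $H\subseteq H'$ one has $P_H\Phi'(1)|_H = P_H I_{H'}|_H = I_H = \Phi(1)$, so the defining hypothesis $\Phi(g)=P_H\Phi'(g)|_H$ of Definition~\ref{D:max} holds automatically at $g=1$. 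Hence the hypothesis of Definition~\ref{D:max} for the set $\G\cup\{1\}$ is equivalent to that for $\G$, and $\Phi$ is maximal on $\G$ if and only if it is maximal on $\G\cup\{1\}$. Feeding this back through the equivalence (i)--(iii) for both sets gives that $\G$ is ccp/ucp hyperrigid if and only if $\G\cup\{1\}$ is.

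I do not anticipate a genuine obstacle here; the only care needed is to keep straight which of the ``resp.''\ versions of Theorem~\ref{T:hypiffuepun} and which implication of Proposition~\ref{P:uepiffmax} is invoked for the ccp and for the ucp statement, and to remember that the collapse of ccp and ucp hyperrigidity is licensed by the representation-wise equivalence in Proposition~\ref{P:uepiffmax}, not by the definitions themselves.
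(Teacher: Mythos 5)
Your derivation of the equivalence of (i), (ii) and (iii) is exactly the paper's: it combines Theorem \ref{T:hypiffuepun} (in both its ucp and ccp forms) with the representation-wise equivalences of Proposition \ref{P:uepiffmax}, and your elaboration of how the two fit together is correct.

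The justification of the final assertion, however, contains a genuine error. Definition \ref{D:max} quantifies over \emph{all} $*$-homomorphisms $\Phi'\colon\ca(\G)\to\B(H')$ with $H\subseteq H'$, with no unitality or non-degeneracy imposed on $\Phi'$; thus $\Phi'(1)$ is merely a projection, and your identity $P_H\Phi'(1)|_H=P_H I_{H'}|_H$ fails in general. Consequently the hypothesis of Definition \ref{D:max} at $g=1$ is a genuine additional constraint on the dilation $\Phi'$, and maximality on $\G\cup\{1\}$ is a priori \emph{weaker} than maximality on $\G$, since it asks the conclusion for fewer dilations. The implication from maximality on $\G$ to maximality on $\G\cup\{1\}$ is indeed trivial, but the converse --- which is exactly what you need for the direction that hyperrigidity of $\G\cup\{1\}$ implies hyperrigidity of $\G$ --- is not, and your argument does not supply it. The clean repair is to run the comparison at the level of the unique extension property rather than directly on Definition \ref{D:max}: for a non-degenerate (hence unital) $\Phi$, a ucp map $\Psi$ and a faithful non-degenerate (hence unital) representation $j$, one automatically has $\Psi\circ j(1)=\Psi(I_K)=I_H=\Phi(1)$, so the ucp unique extension property with respect to $\G$ and with respect to $\G\cup\{1\}$ genuinely coincide; Proposition \ref{P:uepiffmax} then transfers this equivalence to maximality, and Theorem \ref{T:hypiffuepun} applied to both generating sets (noting $\ca(\G\cup\{1\})=\ca(\G)$) finishes the argument. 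The paper asserts the maximality equivalence for unital representations without proof, but this detour through the unique extension property is what makes it true.
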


\begin{proof}
The first part of equivalences follows by Theorem \ref{T:hypiffuepun} and Proposition \ref{P:uepiffmax}.
For the second part, we have by assumption that $\ca(\G) = \ca(\G \cup \{1\})$.
Hence the conclusion follows by the fact that a unital $*$-representation is maximal on $\G$ if and only if it is maximal on $\G\cup \{1\}$.
\end{proof}

Combining Proposition \ref{P:ccp/ucp} and Theorem \ref{T:hypiffmaxun} we obtain the following corollary.

\begin{corollary}\label{C:ccp/ucp}
Let $\G\subseteq\ca(\G)$ be a generating set.
Then $\G$ is ccp hyperrigid if and only if $\G$ is ucp hyperrigid.
\end{corollary}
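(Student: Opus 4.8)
The plan is to deduce Corollary \ref{C:ccp/ucp} by splitting on whether $\ca(\G)$ is unital, since these two cases exhaust all possibilities and have both been handled in the preceding results. First I would recall that the implication \textbf{ucp hyperrigid $\Rightarrow$ ccp hyperrigid} is the only direction requiring argument: every unital completely positive map is in particular completely contractive, so if every sequence of ccp maps that fixes $\G$ asymptotically also fixes $\ca(\G)$, then the same holds a fortiori for every sequence of ucp maps; hence ccp hyperrigidity trivially implies ucp hyperrigidity. Thus the substance is the forward direction.

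For the forward direction I would argue as follows. If $\ca(\G)$ is \emph{not} unital, then Proposition \ref{P:ccp/ucp} applies verbatim and gives that ucp hyperrigidity of $\G$ implies ccp hyperrigidity of $\G$. If instead $\ca(\G)$ \emph{is} unital, then Theorem \ref{T:hypiffmaxun} applies: items (i) and (ii) there assert precisely that ccp hyperrigidity and ucp hyperrigidity of $\G$ are equivalent (both being equivalent to every non-degenerate $*$-representation of $\ca(\G)$ being maximal on $\G$). So in the unital case the equivalence is already established. Combining the two cases, ucp hyperrigidity of $\G$ implies ccp hyperrigidity of $\G$ in all cases, which together with the trivial reverse implication gives the stated equivalence.

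There is essentially no obstacle here: the corollary is a bookkeeping consequence of Proposition \ref{P:ccp/ucp} (non-unital case) and Theorem \ref{T:hypiffmaxun} (unital case), and the write-up amounts to little more than noting that these two cases are exhaustive and invoking the two results. If one wanted a marginally more self-contained presentation, one could observe that the unital case can also be seen directly by passing through the maximality characterisation of Proposition \ref{P:uepiffmax}, which makes no distinction between the ucp and ccp unique extension properties when there is a unit (as remarked after Definition \ref{D:max}'s surrounding discussion, the ucp/ccp distinction disappears when $\G$ contains, or $\ca(\G)$ has, a unit); but this adds nothing beyond what Theorem \ref{T:hypiffmaxun} already records, so I would simply cite it.
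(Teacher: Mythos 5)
Your proposal is correct and is essentially identical to the paper's proof, which likewise obtains the corollary by combining Proposition \ref{P:ccp/ucp} (non-unital case) with Theorem \ref{T:hypiffmaxun} (unital case), the reverse implication being trivial since ucp maps are ccp.
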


\subsection{Separating generating set}

Salomon \cite{Sal17} identifies a significant problem when moving to the non-unital context linked to the rigidity of the zero map.
The set $\G$ is said to be \emph{rigid at zero} if for every sequence $(\vphi_n)_n \subseteq {\rm CCP}(\ca(\G), \bC)$ we have:
\[
\lim_n \vphi_n(g)=0, \forall g \in \G
\Rightarrow
\lim_n \vphi_n(c) = 0, \forall c \in \ca(\G).
\]
We will avoid this terminology as we have excluded the zero map from the non-degenerate $*$-representations.
In \cite[Theorem 3.3]{Sal17} it is shown that a generating set is rigid at zero if and only if it is separating in the following sense.

\begin{definition}
Let $\G \subseteq \ca(\G)$ be a generating set.
We say that $\G$ is \emph{separating for $\ca(\G)$} if for any $\vphi \in {\rm CCP}(\ca(\G), \bC)$ we have that:
\[
\vphi(g) = 0, \forall g \in \G \Rightarrow \vphi(c) = 0, \forall c \in \ca(\G).
\]
\end{definition}

Equivalently $\G$ is a separating set if and only if the states of $\ca(\G)$ do not annihilate $\G$.
Since an approximate unit (of contractive positive elements) in a C*-algebra is separating for its completely contractive completely positive functionals, we have the following class of separating generating sets as proven in \cite[Proposition 3.6]{Sal17}.

\begin{proposition} \label{P:unitcls} \cite[Proposition 3.6]{Sal17}.
Let $\G \subseteq \ca(\G)$ be a generating set.
If the closed linear span of $\G$ contains an approximate unit (of contractive positive elements) for $\ca(\G)$, then $\G$ is separating.
\end{proposition}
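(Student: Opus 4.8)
The plan is to use the standard fact that a contractive positive functional on a C*-algebra attains its norm along any approximate unit of contractive positive elements. First I would observe that an element $\vphi \in {\rm CCP}(\ca(\G), \bC)$ is nothing but a positive linear functional on $\ca(\G)$ with $\nor{\vphi} \leq 1$: a linear map into the scalars is completely positive precisely when it is positive, and for a positive functional the completely bounded norm coincides with the ordinary norm. So the separating condition to be verified is exactly that a norm $\leq 1$ positive functional vanishing on $\G$ must vanish on all of $\ca(\G)$.

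Next, suppose $\vphi \in {\rm CCP}(\ca(\G), \bC)$ satisfies $\vphi(g) = 0$ for all $g \in \G$. Since $\vphi$ is linear and norm-continuous, it vanishes on the closed linear span $[\G]$. By hypothesis $[\G]$ contains an approximate unit $(u_\la)_\la$ of contractive positive elements for $\ca(\G)$, and hence $\vphi(u_\la) = 0$ for every $\la$.

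Finally I would invoke \cite[Theorem 3.3.3]{Mur90} --- equivalently, the C*-argument reproduced inside the proof of Proposition \ref{P:ext}, specialised to $n = 1$ and $\fS = \ca(\G)$ --- which yields $\nor{\vphi} = \lim_\la \vphi(u_\la)$ for any positive functional $\vphi$ and any approximate unit of contractive positive elements. The right-hand side being $0$, we conclude $\nor{\vphi} = 0$, so that $\vphi$ is the zero functional on $\ca(\G)$. In particular $\vphi(c) = 0$ for every $c \in \ca(\G)$, which is precisely the separating condition, completing the proof.

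There is essentially no obstacle here; the only point requiring (minor) care is the identification of ${\rm CCP}(\ca(\G),\bC)$ with the contractive positive functionals and the computation of the norm of a positive functional along an approximate unit. Both are standard, and the latter is already spelled out in the proof of Proposition \ref{P:ext}, so one could make the argument self-contained by appealing to that computation rather than to \cite{Mur90}.
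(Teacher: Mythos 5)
Your proof is correct and is exactly the argument the paper has in mind: the paper states this result by citing \cite[Proposition 3.6]{Sal17} and justifies it with the one-line observation that an approximate unit of contractive positive elements is separating for completely contractive completely positive functionals, which is precisely the fact $\nor{\vphi}=\lim_\la\vphi(u_\la)$ that you invoke. The identification of ${\rm CCP}(\ca(\G),\bC)$ with contractive positive functionals and the appeal to \cite[Theorem 3.3.3]{Mur90} are both handled correctly.
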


We now present the proof of \cite[Theorem 3.3]{Sal17}.
The proof we provide here allows to remove the non-degeneracy assumption from \cite[Theorem 3.3, item (ii)]{Sal17}.

\begin{theorem} \label{T:rigzero} \cite[Theorem 3.3]{Sal17}.
Let $\G \subseteq \ca(\G)$ be a generating set.
The following are equivalent:
\begin{enumerate}
\item $\G$ is a separating set.
\item $\G$ is rigid at zero.
\item For every faithful $*$-representation $\Phi \colon \ca(\G) \to \B(H)$ and every sequence of unital completely positive maps $\phi_n \colon \B(H) \to \B(H)$ the following holds:
\[
\lim_n \phi_n(\Phi(g))=0 \foral g \in \G
\Rightarrow
\lim_n \phi_n(\Phi(c))=0 \foral c \in \ca(\G).
\]
\item For every faithful $*$-representation $\Phi \colon \ca(\G) \to \B(H)$ and every sequence of completely contractive completely positive maps $\phi_n \colon \B(H) \to \B(H)$ the following holds:
\[
\lim_n \phi_n(\Phi(g))=0 \foral g \in \G
\Rightarrow
\lim_n \phi_n(\Phi(c))=0 \foral c \in \ca(\G).
\]
\end{enumerate}
\end{theorem}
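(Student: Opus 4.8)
The scheme I would follow is the cycle $(i)\Rightarrow(iv)\Rightarrow(iii)\Rightarrow(ii)\Rightarrow(i)$, which keeps the two $\epsilon$-type implications [(iii) and (iv)] adjacent to their respective functional-analytic counterparts and lets the separating/rigidity reformulation do most of the work. The implication $[(i)\Rightarrow(iv)]$ is the concrete, ``take a point-mass'' argument: given $\vphi\in{\rm CCP}(M_\infty,\dots)$—no, more precisely, given a sequence of ccp maps $\phi_n\colon\B(H)\to\B(H)$ with $\lim_n\phi_n(\Phi(g))=0$ for all $g\in\G$, one wants to produce from a hypothetical failure a ccp functional on $\ca(\G)$ annihilating $\G$ but not all of $\ca(\G)$. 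Concretely, if $\lim_n\phi_n(\Phi(c))\neq 0$ for some $c$, pass to a subsequence along which $\|\phi_n(\Phi(c))\|$ is bounded below, pick unit vectors $\xi_n,\eta_n$ with $|\langle\phi_n(\Phi(c))\xi_n,\eta_n\rangle|$ bounded below, and form the functionals $\vartheta_n:=\langle\phi_n(\Phi(\cdot))\xi_n,\eta_n\rangle$ on $\ca(\G)$ (after composing with the faithful $\Phi$). These are completely contractive and—being vector functionals composed with ccp maps—can be realised as genuine ccp functionals after a standard $2\times 2$ matrix trick (replacing $\xi_n,\eta_n$ by a common unit vector via the $\begin{bmatrix}0&\phi_n\\ \phi_n^*&0\end{bmatrix}$-dilation, or simply invoking that a bounded net of ccp functionals has a weak* cluster point). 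A weak* cluster point $\vphi$ of $(\vartheta_n)_n$ is ccp, vanishes on $\G$, but is nonzero on $c$—contradicting the separating hypothesis (i).

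For $[(iv)\Rightarrow(iii)]$ there is nothing to prove: ucp maps are in particular ccp, so the hypothesis in (iv) is stronger (applies to a larger class of $\phi_n$) and hence implies (iii). For $[(iii)\Rightarrow(ii)]$: suppose $(\vphi_n)_n\subseteq{\rm CCP}(\ca(\G),\bC)$ with $\lim_n\vphi_n(g)=0$ for all $g\in\G$; fix a faithful non-degenerate $*$-representation $\Phi\colon\ca(\G)\to\B(H)$ and a unit vector $\xi\in H$, and define $\phi_n\colon\B(H)\to\B(H)$ by $\phi_n(T):=\vphi_n(\Phi^{-1}(\cdot))$—but $\Phi^{-1}$ is only defined on $\Phi(\ca(\G))$, so first extend $\vphi_n\circ\Phi^{-1}$ to a ccp functional $\psi_n$ on $\B(H)$ by Arveson's Extension Theorem, then set $\phi_n(T):=\psi_n(T)\,P$ for a fixed rank-one projection $P=\xi\xi^*$. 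Each $\phi_n$ is ccp, $\phi_n(\Phi(g))=\vphi_n(g)P\to 0$, so by (iii) $\phi_n(\Phi(c))=\vphi_n(c)P\to 0$, i.e.\ $\vphi_n(c)\to 0$ for all $c\in\ca(\G)$; that is exactly rigidity at zero (ii). Finally $[(ii)\Rightarrow(i)]$ is immediate: rigidity at zero applied to the constant sequence $\vphi_n:=\vphi$ (for $\vphi\in{\rm CCP}(\ca(\G),\bC)$ with $\vphi|_\G=0$) yields $\vphi=0$ on $\ca(\G)$, which is the separating property.

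The step I expect to be the main obstacle is $[(i)\Rightarrow(iv)]$, specifically the passage from the \emph{operator-valued} maps $\phi_n\colon\B(H)\to\B(H)$ to genuine \emph{scalar} ccp functionals on $\ca(\G)$ that can be fed into the separating hypothesis. The subtlety is that a vector functional $T\mapsto\langle Tξ,η\rangle$ with $\xi\neq\eta$ need not be positive, so one cannot simply compose; the fix is the standard device of replacing the pair $(\xi,\eta)$ by the single vector $\zeta=\tfrac{1}{\sqrt2}(\xi\oplus\eta)\in H\oplus H$ and working with $\widehat\phi_n:=\mathrm{diag}(\phi_n,\phi_n)$ composed into a faithful $*$-representation of $\ca(\G)$ on $H\oplus H$, so that the off-diagonal entry of $\langle\widehat\phi_n(\cdot)\zeta,\zeta\rangle$ recovers $\langle\phi_n(\cdot)\xi,\eta\rangle$ up to a factor—while the whole functional $\langle\widehat\phi_n(\cdot)\zeta,\zeta\rangle$ is now ccp. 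One must also be slightly careful that the faithful $*$-representation used in (iv) may be fixed in advance, so the $H\oplus H$ construction has to be transported back via Arveson extension to the ambient $\B(H)$ rather than changing the representation; alternatively one observes that the statement of (iv) is representation-independent up to Arveson extension, exactly as in Remark~\ref{R:faithful}, so the choice of faithful representation is harmless. Everything else is routine weak*-compactness and the elementary $\B(H)$-arithmetic already used in the paper.
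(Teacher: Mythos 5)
Your cycle $(i)\Rightarrow(iv)\Rightarrow(iii)\Rightarrow(ii)\Rightarrow(i)$ is a reasonable alternative organisation (the paper instead proves $(i)\Leftrightarrow(ii)$, $(i)\Leftrightarrow(iii)$ and $(i)\Leftrightarrow(iv)$ separately, routing every forward implication through states of the quotient $\ell^\infty(\B(H))/c_0(\B(H))$ rather than through subsequences and weak* cluster points), and the steps $(iv)\Rightarrow(iii)$ and $(ii)\Rightarrow(i)$ are correct. But two of your steps have genuine gaps.

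In $(i)\Rightarrow(iv)$, the functionals $\vartheta_n=\langle\phi_n(\Phi(\cdot))\xi_n,\eta_n\rangle$ with $\xi_n\neq\eta_n$ are not positive, and neither of your proposed repairs resolves this. You cannot ``simply invoke'' weak* compactness of ccp functionals, because the $\vartheta_n$ are not ccp and a weak* cluster point of non-positive functionals need not be positive. The $2\times2$ device also fails as described: the amplified map is evaluated at $(\Phi\oplus\Phi)(c)$, which is a \emph{diagonal} operator on $H\oplus H$, so $\bigl\langle(\phi_n\otimes\id_{M_2})((\Phi\oplus\Phi)(c))\zeta_n,\zeta_n\bigr\rangle=\tfrac12\bigl(\langle\phi_n(\Phi(c))\xi_n,\xi_n\rangle+\langle\phi_n(\Phi(c))\eta_n,\eta_n\rangle\bigr)$; the cross term $\langle\phi_n(\Phi(c))\xi_n,\eta_n\rangle$ never appears, and a scalar-valued functional has no ``off-diagonal entry'' to extract. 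The correct repair is simpler: since $\ca(\G)$ is spanned by its positive elements, if $\phi_n(\Phi(c))\not\to0$ for some $c$ then this already fails for some $c\geq0$; then $\phi_n(\Phi(c))\geq0$, its norm is approximately attained at a vector state, so one may take $\eta_n=\xi_n$, making the $\vartheta_n$ positive contractive (hence ccp) functionals whose weak* cluster point annihilates $\G$ but not $c$, contradicting (i).

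In $(iii)\Rightarrow(ii)$ you feed the maps $\phi_n(T)=\psi_n(T)P$ into hypothesis (iii), but these are not unital ($\phi_n(I_H)=\psi_n(I_H)P\neq I_H$ for a rank-one $P$), whereas (iii) quantifies only over \emph{ucp} sequences --- and at this point of your cycle you may not use (iv). The fix, which is exactly what the paper does in its converse direction of $(i)\Leftrightarrow(iii)$, is to pass to a subsequence on which $|\vphi_n(c)|$ is bounded below, normalise the extensions $\psi_n$ to states, and set $\phi_n(T):=\psi_n(T)I_H$, which is ucp. With these two corrections your argument goes through and gives an essentially equivalent, if more pedestrian, proof than the paper's $\ell^\infty/c_0$ argument.
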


\begin{proof}

\noindent
[(i) $\Leftrightarrow$ (ii)].
Suppose that $\G$ is a separating set and let $(\vphi_n)_n \subseteq {\rm CCP}(\ca(\G), \bC)$ with $\lim_n \vphi_n(g)=0$ for every $g \in \G$.
Consider the completely contractive completely positive map
\[
\Psi \colon \ca(\G) \to \ell^\infty; \Psi(c) = (\vphi_1(c), \vphi_2(c), \dots),
\]
and let $q \colon \ell^\infty \to \ell^\infty/c_0$ denote the canonical quotient map.
For every state $\tau$ of $\ell^\infty/c_0$ we have that the map
\[
\vphi_\tau := \tau \circ q \circ \Psi
\]
is a completely contractive completely positive map of $\ca(\G)$ such that
\[
\vphi_\tau|_{\G} = \tau \circ q \circ \Psi|_{\G} = 0,
\]
since $\Psi(\G) \subseteq \ker q$ by assumption.
Since $\G$ is separating, we then get $\vphi_\tau = 0$.
Since this holds for every state $\tau$ of $\ell^\infty/c_0$ we deduce that $q \circ \Psi(c) = 0$ for every positive $c \in \ca(\G)$, and hence $q\circ \Psi=0$.
Therefore we conclude $\Psi(\ca(\G)) \subseteq c_0$; that is $\lim_n \vphi_n(c) = 0$ for every $c \in \ca(\G)$.

Conversely, suppose that item (ii) holds, and suppose that for some $\vphi \in {\rm CCP}(\ca(\G), \bC)$ we have $\vphi|_{\G} = 0$.
We can then set $\vphi_n = \vphi$ for every $n \in \bN$, and deduce that $\vphi = 0$.

\smallskip

\noindent
[(i) $\Leftrightarrow$ (iii)].
Suppose that item (i) holds.
Let $\Phi \colon \ca(\G) \to \B(H)$ be a faithful $*$-representation and let $\phi_n \colon \B(H) \to \B(H)$ be unital completely positive maps such that 
\[
\lim_n \phi_n(\Phi(g)) = 0 \foral g \in \G.
\]
Consider the canonical quotient map
\[
q \colon \ell^{\infty}(\B(H)) \to \ell^\infty(\B(H))/c_0(\B(H)).
\]
Then for any state $\tau$ in $\ell^\infty(\B(H))/c_0(\B(H))$ we can consider the completely contractive completely positive functional
\[
\vphi_{\tau}(c) := \tau \circ q(\phi_1(\Phi(c)), \phi_2(\Phi(c)), \dots)
\foral
c \in \ca(\G).
\]
By definition we have $\vphi_{\tau}|_{\G} = 0$, and thus the separating property gives $\vphi_{\tau}=0$.
Since states of C*-algebras separate their points, we get
\[
q(\phi_1(\Phi(c)), \phi_2(\Phi(c)), \dots) = 0
\foral
c \in \ca(\G),
\]
and thus $\lim_n \phi_n(\Phi(c)) = 0$ for all $c \in \ca(\G)$.

The converse is proven in the same way as in \cite[Theorem 3.3, (ii) implies (v)]{Sal17}.
Suppose that for some $\vphi \in  {\rm CCP}(\ca(\G), \bC)$ we have $\vphi|_{\G} = 0$.
Let $\Phi \colon \ca(\G) \to \B(H)$ be a faithful $*$-representation, and take the completely contractive completely positive map 
\[
\vphi' := \vphi \circ \Phi^{-1} \colon \Phi(\ca(\G)) \to \bC.
\]
Then we can extend $\vphi'$ to a completely contractive completely positive map on $\B(H)$, which we will denote by the same symbol $\vphi'$. 
After normalising, without loss of generality we may assume that $\vphi'(I_H) = \|\vphi'\| = 1$.
Let $\phi \colon \B(H) \to \B(H)$ be the unital completely positive map given by $\phi(x) := \vphi'(x) I_H$.
Then 
\[
\phi(\Phi(g)) = \vphi'(\Phi(g))I_H = 0
\foral g \in \G.
\]
By considering the constant sequence $\phi_n := \phi$ for all $n \in \bN$, we get $\phi(\Phi(c)) = 0$, and thus $\vphi = 0$.

\smallskip

\noindent
[(i) $\Leftrightarrow$ (iv)].
Follows in the same way as the proof of [(i) $\Leftrightarrow$ (iii)].
\end{proof}
We will need the following lemma to connect hyperrigidity with the unique extension property in the case where the generating set is separating.

\begin{lemma}\label{L:sepdecomp}
Let $\G\subseteq \ca(\G)$ be a separating generating set and consider $\Phi \colon \ca(\G) \to \B(H)$ to be a $*$-representation. 
Set $L:= [\Phi(\ca(\G)) H]$. If $j\colon \ca(\G)\to \B(K)$ is a $*$-representation and $\Psi\colon \B(K)\to \B(H)$ is a completely contractive completely positive map such that $\Psi(j(g))=\Phi(g)$ for every $g\in \G$, then 
\[
\Psi(j(c))=P_L \Psi(j(c))|_L\oplus 0_{L^\perp} \foral c\in \ca(\G).
\]
\end{lemma}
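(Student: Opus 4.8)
The plan is to exploit the separating hypothesis to show that $\Psi \circ j$ \emph{vanishes} on the orthogonal complement $L^\perp$ of $L = [\Phi(\ca(\G))H]$, and is supported on $L$; combined, this gives the stated block-diagonal form. First I would observe that since $\Phi$ is a $*$-representation, $L$ is reducing for $\Phi$ and $\Phi(c) = \Phi(c)|_L \oplus 0_{L^\perp}$ for every $c \in \ca(\G)$; in particular $\Phi(g) = \Phi(g)|_L \oplus 0_{L^\perp}$ for each $g \in \G$. So it suffices to prove that $\Psi(j(c))$ annihilates $L^\perp$ and has range in $L$, for all $c$.

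The key step is to produce, for each $\xi \in L^\perp$ with $\|\xi\| = 1$, a completely contractive completely positive functional $\vphi_\xi$ on $\ca(\G)$ that vanishes on $\G$; then separateness forces $\vphi_\xi = 0$ on all of $\ca(\G)$, and this should be exactly the information needed to kill $\Psi(j(c))$ on $L^\perp$. The natural candidate is $\vphi_\xi(c) := \langle \Psi(j(c))\xi, \xi\rangle$, which is indeed ccp as a compression of the ccp map $\Psi \circ j$. For $g \in \G$ we have $\Psi(j(g)) = \Phi(g) = \Phi(g)|_L \oplus 0_{L^\perp}$, so $\langle \Psi(j(g))\xi, \xi\rangle = 0$ since $\xi \in L^\perp$. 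Hence $\vphi_\xi|_\G = 0$, and by the separating property $\vphi_\xi = 0$, i.e. $\langle \Psi(j(c))\xi, \xi\rangle = 0$ for all $c \in \ca(\G)$ and all $\xi \in L^\perp$.

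From here I would upgrade the diagonal vanishing to full vanishing of the $(L^\perp, L^\perp)$ and off-diagonal corners. Writing $\Psi(j(c)) = \begin{bmatrix} A_c & B_c \\ C_c & D_c \end{bmatrix}$ with respect to $H = L \oplus L^\perp$, the computation above shows $\langle D_c \xi, \xi \rangle = 0$ for all $\xi \in L^\perp$ and all $c$; applying this to $c^*c$ and using that $\Psi \circ j$ is completely positive (so that $\Psi(j(c^*c)) \geq \Psi(j(c))^*\Psi(j(c))$ by the Kadison–Schwarz inequality, valid since $\Psi \circ j$ is ccp) gives $0 = \langle \Psi(j(c^*c))\xi,\xi\rangle \geq \| \Psi(j(c))\xi\|^2$, whence $\Psi(j(c))\xi = 0$ for every $\xi \in L^\perp$. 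Thus $B_c = D_c = 0$ for all $c$, i.e. $\Psi(j(c)) = \begin{bmatrix} A_c & 0 \\ C_c & 0\end{bmatrix}$. Finally, taking adjoints: $\Psi(j(c))^* = \Psi(j(c^*))$ also has this form, which forces $C_c = 0$ as well. Hence $\Psi(j(c)) = A_c \oplus 0_{L^\perp} = P_L \Psi(j(c))|_L \oplus 0_{L^\perp}$, as required.

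The main obstacle I anticipate is the passage from diagonal to full vanishing on $L^\perp$: one needs the Kadison–Schwarz inequality for the ccp map $\Psi \circ j$, which is legitimate (Stinespring/Schwarz inequality holds for completely positive contractions, or one dilates via Proposition \ref{P:Dil}), and one must be careful that $\Psi$ is only ccp, not ucp, so the genuinely contractive case is in play; but the Schwarz inequality $\Psi(j(c))^*\Psi(j(c)) \le \Psi(j(c^*c))$ survives in that generality, so the argument goes through. Everything else is a routine bookkeeping of $2\times 2$ operator matrices.
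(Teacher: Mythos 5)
Your proposal is correct and follows essentially the same route as the paper: the central step in both is to apply the separating hypothesis to the ccp functionals $c \mapsto \ip{\Psi(j(c))\xi,\xi}$ for $\xi \in L^\perp$, which vanish on $\G$ because $\Psi\circ j$ and $\Phi$ agree there. The only (harmless) divergence is in the final bookkeeping: the paper kills the off-diagonal corners by noting that a positive $2\times 2$ operator matrix with zero $(2,2)$-entry has zero off-diagonal entries and that $\ca(\G)$ is spanned by positives, whereas you use the Schwarz inequality on $c^*c$ together with an adjoint argument — both are valid.
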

\begin{proof}
Set $\wt{\Psi}:=P_L \Psi|_L$ and we will prove that
\[
\Psi \circ j 
=
(\wt{\Psi} \circ j) \oplus 0_{L^{\perp}}.
\]
Towards this end, let $h \in L^\perp$, and note that
\begin{align*}
\sca{\Psi(j(g))h,h}
& =
\sca{\Phi(g)h,h}
=
0
\foral g \in \G,
\end{align*}
by the definition of $L$.
Since $\G$ is separating we get
\[
\sca{\Psi(j(c))h,h}
=
0
\foral c \in \ca(\G),
\]
and so $P_{L^\perp} (\Psi \circ j) |_{L^\perp} = 0$. 
Therefore, for any positive element $c\in\ca(\G)$ we obtain that the positive matrix $\Psi \circ j(c)$ has a zero $(2,2)$-entry with respect to the decomposition $H=L\oplus L^\perp$. Thus it also has zero $(1,2)$ and $(2,1)$-entries.
Since $\ca(\G)$ is spanned by its positive elements we obtain that $\Psi \circ j $ is zero outside the compression on $L$, as required.
\end{proof}

It turns out that, if $\G\subseteq \ca(\G)$ is separating, then the non-degeneracy condition in the definition of hyperrigidity is redundant. 
This is shown in \cite[Theorem 3.9]{Sal17}; however in the context of \cite[Theorem 3.9]{Sal17} the set $\G$ a priori contains the unit when $\ca(\G)$ is unital.
Since here we do not make this assumption we revisit the proof.

\begin{proposition}\label{P:sephyp} \cite[Theorem 3.9]{Sal17}
Let $\G\subseteq \ca(\G)$ be a separating generating set. 
The following are equivalent:
\begin{enumerate}
\item $\G$ is ccp hyperrigid.
\item $\G$ is ucp hyperrigid.
\item For every faithful $*$-representation $\Phi \colon \ca(\G) \to \B(H)$ and every sequence of completely contractive completely positive maps $\phi_n \colon \B(H) \to \B(H)$ the following holds:
\[
\lim_n \|\phi_n(\Phi(g)) - \Phi(g)\|=0, \forall g \in \G
\Rightarrow
\lim_n \|\phi_n(\Phi(c)) - \Phi(c)\| = 0, \forall c \in \ca(\G).
\]
\item For every faithful $*$-representation $\Phi \colon \ca(\G) \to \B(H)$ and every sequence of unital completely positive maps $\phi_n \colon \B(H) \to \B(H)$ the following holds:
\[
\lim_n \|\phi_n(\Phi(g)) - \Phi(g)\|=0, \forall g \in \G
\Rightarrow
\lim_n \|\phi_n(\Phi(c)) - \Phi(c)\| = 0, \forall c \in \ca(\G).
\]
\end{enumerate}
\end{proposition}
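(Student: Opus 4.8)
The plan is to run a short chain of implications and isolate the one substantive step. The implications [(iii) $\Rightarrow$ (i)] and [(iv) $\Rightarrow$ (ii)] are trivial, since a faithful non-degenerate $*$-representation is in particular a faithful $*$-representation; [(iii) $\Rightarrow$ (iv)] is trivial because every unital completely positive map is completely contractive and completely positive; and [(i) $\Leftrightarrow$ (ii)] is Corollary \ref{C:ccp/ucp}. So everything reduces to [(i) $\Rightarrow$ (iii)]. Thus I would assume $\G$ is ccp hyperrigid, fix a faithful---possibly degenerate---$*$-representation $\Phi \colon \ca(\G) \to \B(H)$ and completely contractive completely positive maps $\phi_n \colon \B(H) \to \B(H)$ with $\phi_n(\Phi(g)) \to \Phi(g)$ for all $g \in \G$, and aim to prove $\phi_n(\Phi(c)) \to \Phi(c)$ for every $c \in \ca(\G)$.

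First I would compress the sequence into a single map. Let $\iota \colon \B(H) \to \ell^\infty(\B(H))$ be the diagonal embedding, $q$ the quotient onto $\ell^\infty(\B(H))/c_0(\B(H))$, and $\ka$ a faithful non-degenerate $*$-representation of $\ell^\infty(\B(H))/c_0(\B(H))$ on a Hilbert space $K$. Set $\wt\Phi := \ka \circ q \circ \iota \circ \Phi$, a faithful $*$-representation of $\ca(\G)$ on $\B(K)$, and let $\Psi_0 \colon \B(H) \to \B(K)$ be the completely contractive completely positive map $x \mapsto \ka(q((\phi_n(x))_n))$. Since $(\phi_n(\Phi(g)) - \Phi(g))_n$ is norm-null and hence lies in $c_0(\B(H))$, we get $\Psi_0(\Phi(g)) = \wt\Phi(g)$ for all $g \in \G$; and, since $\ka$ is faithful, the equality $\Psi_0 \circ \Phi = \wt\Phi$ is equivalent to the desired $\|\phi_n(\Phi(c)) - \Phi(c)\| \to 0$ for all $c$. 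So it remains to show $\Psi_0 \circ \Phi = \wt\Phi$.

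The delicate point is that $\wt\Phi$ is faithful but need not be non-degenerate, so the unique extension property---which ccp hyperrigidity grants to non-degenerate $*$-representations by Proposition \ref{P:hypuep}---is not available for $\wt\Phi$ directly. This is precisely where the separating hypothesis and Lemma \ref{L:sepdecomp} enter. Put $L := [\wt\Phi(\ca(\G))K]$, so that $\wt\Phi = \wt\Phi_0 \oplus 0_{L^\perp}$ with $\wt\Phi_0$ a faithful non-degenerate $*$-representation on $L$. Applying Lemma \ref{L:sepdecomp} to $\wt\Phi$, to the $*$-representation $\Phi$, and to $\Psi_0$ (which agree on $\G$), we get $\Psi_0(\Phi(c)) = P_L \Psi_0(\Phi(c))|_L \oplus 0_{L^\perp}$ for all $c$; hence $\Psi_0 \circ \Phi = \wh\Psi_0 \oplus 0_{L^\perp}$ for the completely contractive completely positive map $\wh\Psi_0 := P_L \Psi_0(\Phi(\cdot))|_L \colon \ca(\G) \to \B(L)$, which satisfies $\wh\Psi_0|_\G = \wt\Phi_0|_\G$. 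Fixing a faithful non-degenerate $*$-representation $j' \colon \ca(\G) \to \B(K')$ and extending $\wh\Psi_0 \circ (j')^{-1}$ by Arveson's Extension Theorem to a completely contractive completely positive $\Theta \colon \B(K') \to \B(L)$ with $\Theta \circ j'|_\G = \wt\Phi_0|_\G$, the ccp unique extension property of $\wt\Phi_0$ with respect to $\G$ forces $\Theta \circ j' = \wt\Phi_0$, i.e. $\wh\Psi_0 = \wt\Phi_0$. Therefore $\Psi_0 \circ \Phi = \wt\Phi_0 \oplus 0_{L^\perp} = \wt\Phi$, which completes the argument.

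The main obstacle is exactly the non-degeneracy failure just flagged: the Calkin-type representation $\wt\Phi$ produced by the $\ell^\infty$-device is faithful but typically degenerate, and this cannot be circumvented without invoking that $\G$ is separating. Lemma \ref{L:sepdecomp} neutralises it by showing that any completely contractive completely positive ``extension'' of a $*$-representation that matches it on a separating set is automatically supported on the essential subspace, reducing everything to the non-degenerate case. (An alternative that avoids Lemma \ref{L:sepdecomp} is to argue directly with the $2 \times 2$ operator-matrix decomposition of $\phi_n(\Phi(c))$ along $[\Phi(\ca(\G))H] \oplus [\Phi(\ca(\G))H]^\perp$: ccp hyperrigidity against the faithful non-degenerate compression of $\Phi$ controls the $(1,1)$-corner, the separating property of $\G$---via Theorem \ref{T:rigzero} after the same $\ell^\infty$-device---controls the $(2,2)$-corner, and the estimate $\|B\| \leq \|A\|^{1/2}\|D\|^{1/2}$ for positive block matrices controls the off-diagonal once $c$ is taken positive, after which linearity finishes.)
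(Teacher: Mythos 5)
Your proposal is correct and follows essentially the same route as the paper's proof: reduce everything to [(i) $\Rightarrow$ (iii)], compress the sequence $(\phi_n)_n$ via the $\ell^\infty/c_0$ device into a single completely contractive completely positive map, use the separating hypothesis through Lemma \ref{L:sepdecomp} to localise that map on the essential subspace $L = [\wt{\Phi}(\ca(\G))K]$, and invoke the ccp unique extension property (Proposition \ref{P:hypuep}) for the faithful non-degenerate compression. The only (immaterial) difference is where Arveson's Extension Theorem is applied --- you extend from a separately chosen faithful non-degenerate representation $j'$, whereas the paper extends the map defined on $\wt{\Phi}(\ca(\G)) \subseteq \B(L)$ directly.
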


\begin{proof}
The implications [(iii) $\Rightarrow$ (iv) $\Rightarrow$ (ii)] are immediate, while the equivalence [(i) $\Leftrightarrow$ (ii)] is Corollary \ref{C:ccp/ucp}.
It remains to prove the implication [(i) $\Rightarrow$ (iii)]. 
Towards this end, let $\Phi \colon \ca(\G) \to \B(H)$ be a faithful $*$-representation and let $\phi_n \colon \B(H) \to \B(H)$ be completely contractive completely positive maps such that
\[
\lim_n \|\phi_n(\Phi(g)) - \Phi(g)\| = 0 \foral g \in \G.
\]
Consider the inclusion map
\[
\io \colon \B(H) \to \ell^{\infty}(\B(H)); x \mapsto (x, x, \dots)
\]
and the canonical quotient map
\[
q \colon \ell^{\infty}(\B(H)) \to \ell^\infty(\B(H))/c_0(\B(H)).
\]
Let us also fix a faithful unital $*$-representation
\[
\ka \colon \ell^\infty(\B(H))/c_0(\B(H)) \to \B(K).
\]
Define the faithful $*$-representation
\[
\wt{\Phi}_0 \colon \ca(\G) \to \B(K); \wt{\Phi}_0(c) := \ka \circ q \circ \io \circ \Phi(c),
\]
and set
\[
\wt{\Phi} := \wt{\Phi}_0 |_{L}
\, \text{ for } \,
L := [\wt{\Phi}_0(\ca(\G)) K].
\]
It follows that $\wt{\Phi}$ is a faithful non-degenerate $*$-representation of $\ca(\G)$.
Consider the completely contractive completely positive map
\[
\wt{\Psi}_0 \colon \wt{\Phi}(\ca(\G)) \to \B(K); \wt{\Psi}_0(\wt{\Phi}(c)) = \ka \circ q ( \phi_1(\Phi(c)), \phi_2(\Phi(c)), \dots ).
\]
By Arveson's Extension Theorem we obtain a completely contractive completely positive extension of $\wt{\Psi}_0$ on $\B(L)$, which we still denote by $\wt{\Psi}_0$.
Consider the completely contractive completely positive map
\[
\wt{\Psi} := P_{L} \wt{\Psi}_0 |_{L} \colon \B(L) \to \B(L).
\]
By definition we have $\wt{\Phi}_0 = \wt{\Phi} \oplus 0_{L^\perp}$ and $\wt{\Psi}_0 \circ \wt{\Phi}(g) = \wt{\Phi}_0(g)$ for all $g \in \G$.
Therefore by Lemma \ref{L:sepdecomp} we obtain
\[
\wt{\Psi}_0 \circ \wt{\Phi} 
=
(\wt{\Psi} \circ \wt{\Phi}) \oplus 0_{L^{\perp}}.
\]
Now to complete the proof, for every $g \in \G$ we have 
\[
\wt{\Psi} \circ \wt{\Phi}(g)
=
P_L \wt{\Psi}_0 \circ \wt{\Phi}(g) |_{L}
=
P_L \wt{\Phi}_0(g)|_{L}
=
\wt{\Phi}(g).
\]
Since $\G$ is ccp hyperrigid, by Proposition \ref{P:hypuep} we obtain that the faithful non-degenerate $*$-representation $\wt{\Phi}$ has the ccp unique extension property and hence we get $\wt{\Psi}\circ \wt{\Phi} = \wt{\Phi}$.
Since we have annihilation outside $L$, we then get
\[
\wt{\Psi}_0 \circ \wt{\Phi}
=
(\wt{\Psi} \circ \wt{\Phi}) \oplus 0_{L^\perp}
=
\wt{\Phi} \oplus 0_{L^\perp}
=
\wt{\Phi}_0.
\]
Equivalently, we have
\[
\ka \circ q(\phi_1(\Phi(c)), \phi_2(\Phi(c)), \dots)
=
\ka \circ q(\Phi(c), \Phi(c), \dots)
\foral 
c \in \ca(\G).
\]
Since $\ka$ is faithful, we conclude $\lim_n \phi_n(\Phi(c)) = \Phi(c)$ for all $c \in \ca(\G)$, as required.
\end{proof}

\begin{remark} \label{R:Salun}
One of the assumptions in \cite{Sal17} is to add the unit to the generating set of a unital C*-algebra.
In view of \cite[Proposition 3.6]{Sal17}, recorded as Proposition \ref{P:unitcls} here, this addition makes a generating set of a unital C*-algebra de facto separating.
However, there are non-separating ucp hyperrigid non-unital sets generating unital C*-algebras.
Such an example is exhibited in \cite[Example 3.4]{Sal17} as below.

Consider $\{I, S_1, \dots, S_d\}$ in the Cuntz algebra $\O_d$.
Then any unital $*$-representation of $\O_d$ is maximal on $\{I, S_1, \dots, S_d\} \subseteq \O_d$, see \cite{Dun10}.
Adding or removing the unit from the generating set does not change this, and thus any unital $*$-representation of $\O_d$ is maximal on $\{S_1, \dots, S_d\}$.
Hence $\{S_1, \dots, S_d\} \subseteq \O_d$ is ucp hyperrigid by Theorem \ref{T:hypiffmaxun}.
On the other hand, the zero map on  $\{S_1, \dots, S_d\}$ admits different extensions on $\O_d$, e.g., take the map $\phi = S_\mu^* \cdot S_\mu$ for the word $\mu = 1 2 \dots d$ when $d \geq 2$, and the integral over $\bT$ when $d=1$.
Thus $\{S_1, \dots, S_d\} \subseteq \O_d$ is not separating. 
Example \ref{E:notunit hyp not rig} that follows below gives further examples of this phenomenon.
\end{remark}

The implication [(iii) $\Rightarrow$ (i)] of the following theorem appears in \cite[Lemma 2.7, item (ii)]{Sal17} without the separating assumption; in Remark \ref{R:Salunit} we comment why we revisit the proof.

\begin{theorem}\label{T:hypiffuep}
Let $\G \subseteq \ca(\G)$ be a separating generating set.
The following are equivalent:
\begin{enumerate}
\item $\G$ is ccp hyperrigid.
\item $\G$ is ucp hyperrigid.
\item Every non-degenerate $*$-representation $\Phi \colon \ca(\G) \to \B(H)$ has the ccp unique extension property with respect to $\G$.
\item Every non-degenerate $*$-representation $\Phi \colon \ca(\G) \to \B(H)$ has the ucp unique extension property with respect to $\G$.
\item Every non-degenerate $*$-representation $\Phi \colon \ca(\G) \to \B(H)$ is maximal on $\G$.
\item Every $*$-representation $\Phi \colon \ca(\G) \to \B(H)$ is maximal on $\G$.
\item Every $*$-representation $\Phi \colon \ca(\G) \to \B(H)$ has the ccp unique extension property with respect to $\G$.
\item Every $*$-representation $\Phi \colon \ca(\G) \to \B(H)$ has the ucp unique extension property with respect to $\G$.
\end{enumerate}
\end{theorem}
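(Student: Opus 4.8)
The plan is to assemble most of the listed equivalences from results already in hand, and then to supply the two genuinely new ingredients: promoting the maximality and unique‑extension statements from non‑degenerate to \emph{arbitrary} $*$-representations, and closing the loop back to hyperrigidity.

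First I would dispose of the routine implications. The equivalence [(i) $\Leftrightarrow$ (ii)] is Corollary \ref{C:ccp/ucp}; the implications [(i) $\Rightarrow$ (iii)] and [(ii) $\Rightarrow$ (iv)] are Proposition \ref{P:hypuep}; and [(iii) $\Leftrightarrow$ (iv) $\Leftrightarrow$ (v)] follows by applying Proposition \ref{P:uepiffmax} to each non‑degenerate $*$-representation separately. Next, [(vi) $\Rightarrow$ (v)], [(vii) $\Rightarrow$ (iii)] and [(viii) $\Rightarrow$ (iv)] are trivial, since a non‑degenerate $*$-representation is a $*$-representation; [(vii) $\Rightarrow$ (viii)] is trivial because every unital completely positive map is completely contractive and completely positive, so the ccp unique extension property entails the ucp one; and [(vi) $\Rightarrow$ (vii)] is precisely Remark \ref{R:uepiffmax}. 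After this bookkeeping the remaining content is [(v) $\Rightarrow$ (vi)] and a single implication back to hyperrigidity, for which I would prove [(iii) $\Rightarrow$ (i)].

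For [(v) $\Rightarrow$ (vi)] I would take an arbitrary $*$-representation $\Phi \colon \ca(\G) \to \B(H)$ together with a $*$-representation $\Phi' \colon \ca(\G) \to \B(H')$ such that $H \subseteq H'$ and $\Phi(g) = P_H \Phi'(g)|_H$ for all $g \in \G$. Put $L := [\Phi(\ca(\G))H]$, so that $L$ reduces $\Phi$, the restriction $\Phi_0 := \Phi|_L$ is a non‑degenerate $*$-representation, and $\Phi = \Phi_0 \oplus 0_{L^\perp}$ relative to $H = L \oplus L^\perp$. Applying Lemma \ref{L:sepdecomp} with $j = \Phi'$ and with the completely contractive completely positive compression $\Psi := P_H(\cdot)|_H \colon \B(H') \to \B(H)$ (which satisfies $\Psi(\Phi'(g)) = \Phi(g)$ for $g \in \G$) yields
\[
P_H \Phi'(c)|_H \;=\; P_L \Phi'(c)|_L \,\oplus\, 0_{L^\perp} \qquad \text{for all } c \in \ca(\G);
\]
this is exactly where the separating hypothesis enters. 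Since $P_L \Phi'(g)|_L = \Phi_0(g)$ for $g \in \G$ and $\Phi_0$ is non‑degenerate, item (v) says $\Phi_0$ is maximal on $\G$, so $P_L \Phi'(c)|_L = \Phi_0(c)$ for every $c$; substituting into the displayed identity gives $P_H \Phi'(c)|_H = \Phi_0(c) \oplus 0_{L^\perp} = \Phi(c)$. As $\Phi$ is a $*$-representation, the compression $P_H \Phi'(\cdot)|_H$ is then multiplicative, which forces $H$ to reduce $\Phi'$ and hence $\Phi(c) = \Phi'(c)|_H$ for all $c$, i.e.\ $\Phi$ is maximal on $\G$. Together with the implications already listed, this gives the equivalence of items (iii)--(viii).

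Finally, for [(iii) $\Rightarrow$ (i)] I would revisit the proof of Proposition \ref{P:sephyp}: its implication [(i) $\Rightarrow$ (iii)] invokes the ccp hyperrigidity of $\G$ only once, through Proposition \ref{P:hypuep}, and solely in order to conclude that one particular faithful \emph{non‑degenerate} $*$-representation $\wt{\Phi}$ --- built from a faithful $*$-representation via the $\ell^\infty(\B(H))/c_0(\B(H))$ construction --- has the ccp unique extension property with respect to $\G$. Under item (iii) that conclusion is immediate, so the same argument verifies item (iii) of Proposition \ref{P:sephyp}, which is equivalent to $\G$ being ccp hyperrigid. I expect this last bridge to be the main obstacle: one must check that the representation $\wt{\Phi}$ occurring there is genuinely faithful \emph{and} non‑degenerate, so that hypothesis (iii) applies to it verbatim, and that no further appeal to hyperrigidity is hidden in the argument. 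The step [(v) $\Rightarrow$ (vi)] is the conceptual core --- it is where the separating hypothesis is indispensable, via Lemma \ref{L:sepdecomp} --- but it is short; the real care lies in the bookkeeping between the non‑degenerate and the arbitrary forms of the statements.
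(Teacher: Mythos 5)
Your proposal is correct, and the bulk of it coincides with the paper's proof: the equivalences [(i) $\Leftrightarrow$ (ii)], [(iii) $\Leftrightarrow$ (iv) $\Leftrightarrow$ (v)], the chain [(vi) $\Rightarrow$ (vii) $\Rightarrow$ (viii) $\Rightarrow$ (iii)/(iv)], and the implication [(v) $\Rightarrow$ (vi)] are all handled as in the paper; for the last of these the paper carries out the decomposition $H' = L \oplus (H \ominus L) \oplus (H' \ominus H)$ by hand (separating kills the middle block, then positivity kills the off-diagonal entries), whereas you package the same computation through Lemma \ref{L:sepdecomp} applied to $\Psi = P_H(\cdot)|_H$ --- a legitimate and slightly cleaner organisation. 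The one place where you genuinely diverge is in closing the cycle back to hyperrigidity. The paper proves [(vi) $\Rightarrow$ (ii)]: it builds the faithful (possibly degenerate) representation $\wt{\Phi} = \ka \circ q \circ \io \circ \Phi$ and the ccp map $\wt{\Psi}$ from the sequence $(\phi_n)_n$, dilates $\wt{\Psi}$ to a $*$-representation via Proposition \ref{P:Dil}, and invokes maximality of the degenerate representation $\wt{\Phi}$ --- which is exactly why item (vi) rather than (v) is the pivot. You instead prove [(iii) $\Rightarrow$ (i)] by rerunning the proof of Proposition \ref{P:sephyp}, restricting to the essential subspace $L$ so that hypothesis (iii) applies to the non-degenerate $\wt{\Phi}|_L$, and using Lemma \ref{L:sepdecomp} to propagate the conclusion back to all of $K$. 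Both routes rely on the same $\ell^\infty(\B(H))/c_0(\B(H))$ construction and on the separating hypothesis in the same essential way; yours trades the dilation step (Proposition \ref{P:Dil}) for the compression-to-$L$ step already present in Proposition \ref{P:sephyp}, and your concern about checking that $\wt{\Phi}|_L$ is faithful and non-degenerate is resolved exactly as in that proposition's proof. The resulting implication graph is complete, so no gap remains.
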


\begin{proof}
The equivalence [(i) $\Leftrightarrow$ (ii)] is Corollary \ref{C:ccp/ucp}, and the implication [(i) $\Rightarrow$ (iii)] is Proposition \ref{P:hypuep}. The equivalence [(iii) $\Leftrightarrow$ (iv) $\Leftrightarrow$ (v)] is Proposition \ref{P:uepiffmax}.
The implication [(vi) $\Rightarrow$ (vii)] is Remark \ref{R:uepiffmax} and the implications [(vii) $\Rightarrow$ (viii) $\Rightarrow$ (iii)] are immediate.
We will complete the proof by showing [(v) $\Rightarrow$ (vi)] and [(vi) $\Rightarrow$ (ii)].

We first prove the implication [(v) $\Rightarrow$ (vi)]. Let $\Phi\colon \ca(\G) \to \B(H)$ and $\Phi'\colon \ca(\G)\to\B(H')$ be $*$-representations such that $H'\supseteq H$ and $\Phi(g)=P_H\Phi'(g)|_H$ for every $g\in\G$.
Moreover, set $L:=[\Phi(\ca(\G))H]$ and $\Phi_0:=\Phi|_L$.
Since $\Phi_0$ is maximal and
\[
P_L \Phi'(g)|_L = P_L \Phi(g)|_L = \Phi_0(g) \foral g \in \G,
\]
we have
\[
\Phi' = \Phi_0 \oplus P_{H' \ominus L} \Phi' |_{H' \ominus L}.
\]
Since
\[
P_{H \ominus L} \Phi'(g) |_{H \ominus L}
=
P_{H \ominus L} \Phi(g) |_{H \ominus L}
=
0
\foral g \in \G,
\]
by the separating property we have $P_{H \ominus L} \Phi' |_{H \ominus L} = 0$.
By positivity of the map $P_{H' \ominus L} \Phi' |_{H' \ominus L}$ and since $\ca(\G)$ is linearly spanned by positive elements we have
\[
P_{H' \ominus L} \Phi' |_{H' \ominus L} = 0_{H \ominus L} \oplus P_{H' \ominus H} \Phi' |_{H' \ominus H}.
\]
Therefore we obtain
\[
\Phi'
= \Phi_0 \oplus P_{H' \ominus L} \Phi' |_{H' \ominus L}
= \Phi_0 \oplus 0_{H \ominus L} \oplus P_{H' \ominus H} \Phi' |_{H' \ominus H}
= \Phi \oplus P_{H' \ominus H} \Phi' |_{H' \ominus H},
\]
i.e., $\Phi$ is a direct summand of $\Phi'$, as required.

We now prove the implication [(vi) $\Rightarrow$ (ii)]. By Theorem \ref{T:hypiffmaxun} it suffices to consider $\ca(\G)$ to be non-unital.
Let $\Phi \colon \ca(\G) \to \B(H)$ be a faithful non-degenerate $*$-representation and let $\phi_n \colon \B(H) \to \B(H)$ be unital completely positive maps such that
\[
\lim_n \|\phi_n(\Phi(g)) - \Phi(g)\| = 0 \foral g \in \G.
\]
Consider the inclusion map
\[
\io \colon \B(H) \to \ell^{\infty}(\B(H)); x \mapsto (x, x, \dots)
\]
and the canonical quotient map
\[
q \colon \ell^{\infty}(\B(H)) \to \ell^\infty(\B(H))/c_0(\B(H)).
\]
Let us also fix a faithful unital $*$-representation
\[
\ka \colon \ell^\infty(\B(H))/c_0(\B(H)) \to \B(K).
\]
Define the faithful $*$-representation
\[
\wt{\Phi} \colon \ca(\G) \to \B(K); \wt{\Phi}(c) := \ka \circ q \circ \io \circ \Phi(c),
\]
and the completely contractive completely positive map
\[
\wt{\Psi} \colon \ca(\G) \to \B(K); \wt{\Psi}(c) = \ka \circ q ( \phi_1(\Phi(c)), \phi_2(\Phi(c)), \dots ).
\]
Proposition \ref{P:Dil} implies that there exists a $*$-representation $\wt{\Phi}'\colon \ca(\G)\to \B(K')$ such that $K'\supseteq K$ and
\[
\wt{\Psi}(c)=P_K \wt{\Phi}'(c)|_K \foral c\in \ca(\G).
\]
For every $g \in \G$ we have
\[
P_K \wt{\Phi}'(g)|_K
=
\wt{\Psi}(g)
=
\ka \circ q ( \phi_1(\Phi(g)), \phi_2(\Phi(g)), \dots )
=
\ka \circ q \circ \io \circ \Phi(g)
=
\wt{\Phi}(g).
\]
By our assumption $\wt{\Phi}$ is maximal on $\G$, and therefore we obtain that $K$ is reducing for $\wt{\Phi}'$.
In particular we have $\wt{\Psi}(c)=\wt{\Phi}(c)$ for every $c\in\ca(\G)$.
Equivalently, we have
\[
\ka \circ q(\phi_1(\Phi(c)), \phi_2(\Phi(c)), \dots)
=
\ka \circ q(\Phi(c), \Phi(c), \dots)
\foral
c \in \ca(\G).
\]
Since $\ka$ is faithful we conclude that the equality $\lim_n \phi_n(\Phi(c)) = \Phi(c)$ holds for all $c \in \ca(\G)$, as required.
\end{proof}

The following appears in \cite[Proposition 3.8, Theorem 3.9]{Sal17} showing in action how the unitisation affects the hyperrigidity of a set.
We include a proof for completeness.

\begin{proposition} \label{P:unit ccp/ucp} \cite[Proposition 3.8, Theorem 3.9]{Sal17}.
Let $\G \subseteq \ca(\G)$ be a generating set such that $\ca(\G)$ is not unital.
Let $\ca(\G)^{\#}$ be the unitisation of $\ca(\G)$ and let $1^{\#}$ be its unit.
The following are equivalent:
\begin{enumerate}
\item $\G \cup \{1^{\#}\}$ is ccp/ucp hyperrigid in $\ca(\G)^\#$.
\item $\G$ is separating and ccp/ucp hyperrigid in $\ca(\G)$.
\end{enumerate}
\end{proposition}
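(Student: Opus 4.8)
The plan is to reduce both implications to results already available for separating generating sets (Corollary~\ref{C:ccp/ucp} and Proposition~\ref{P:sephyp}), isolating the only genuinely non-unital subtlety in a single explicit construction. Throughout I will use Corollary~\ref{C:ccp/ucp} to pass freely between the ccp and ucp versions of hyperrigidity, both for $\G$ in $\ca(\G)$ and for $\G\cup\{1^{\#}\}$ in $\ca(\G)^{\#}$. I will also use the elementary fact that the unital extension $\Phi^{\#}\colon\ca(\G)^{\#}\to\B(H)$ of a faithful representation $\Phi$ of the non-unital C*-algebra $\ca(\G)$ is again faithful (if $\Phi^{\#}(a+\la 1^{\#})=0$ with $\la\neq0$ then $-\la^{-1}a$ would be a unit of $\ca(\G)$, which is impossible); being unital, $\Phi^{\#}$ is non-degenerate. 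Conversely, since $\ca(\G)$ is an ideal of $\ca(\G)^{\#}$, the restriction to $\ca(\G)$ of any faithful representation of $\ca(\G)^{\#}$ is faithful, although it need not be non-degenerate --- which is precisely why the ``faithful but possibly degenerate'' clause of Proposition~\ref{P:sephyp}(iii) is the right tool.

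\textbf{Proof of [(2)$\Rightarrow$(1)].} Assuming $\G$ separating and ccp hyperrigid in $\ca(\G)$, Proposition~\ref{P:sephyp}(iii) applies: the hyperrigidity implication holds for every faithful $*$-representation of $\ca(\G)$ (not assumed non-degenerate) and every sequence of ccp maps. Now let $\Theta\colon\ca(\G)^{\#}\to\B(H)$ be faithful non-degenerate (hence unital) and let $\psi_n\colon\B(H)\to\B(H)$ be ucp with $\psi_n(\Theta(g))\to\Theta(g)$ for every $g\in\G\cup\{1^{\#}\}$; the condition at $1^{\#}$ is automatic since $\psi_n(I_H)=I_H=\Theta(1^{\#})$. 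Put $\Phi:=\Theta|_{\ca(\G)}$, a faithful $*$-representation of $\ca(\G)$ on $H$; the $\psi_n$ are ccp and $\psi_n(\Phi(g))=\psi_n(\Theta(g))\to\Phi(g)$ for $g\in\G$, so Proposition~\ref{P:sephyp}(iii) yields $\psi_n(\Phi(a))\to\Phi(a)$ for all $a\in\ca(\G)$. Since $\ca(\G)^{\#}=\ca(\G)+\bC 1^{\#}$ and $\psi_n(\Theta(1^{\#}))=\Theta(1^{\#})$, this gives $\psi_n(\Theta(c))\to\Theta(c)$ for all $c\in\ca(\G)^{\#}$, so $\G\cup\{1^{\#}\}$ is ucp hyperrigid in $\ca(\G)^{\#}$.

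\textbf{Proof of [(1)$\Rightarrow$(2)].} Hyperrigidity of $\G$ in $\ca(\G)$ follows at once: given a faithful non-degenerate $\Phi\colon\ca(\G)\to\B(H)$ and ucp maps $\phi_n$ with $\phi_n(\Phi(g))\to\Phi(g)$ for $g\in\G$, we pass to its unital extension $\Phi^{\#}$, a faithful non-degenerate representation of $\ca(\G)^{\#}$; since $\phi_n(\Phi^{\#}(1^{\#}))=I_H=\Phi^{\#}(1^{\#})$ we have convergence on $\G\cup\{1^{\#}\}$, so (1) forces convergence on all of $\ca(\G)^{\#}$, hence on $\ca(\G)$. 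The substantive point is that (1) forces $\G$ to be separating. Suppose not; then, by the characterisation of separating sets via states, some state $\tau$ of $\ca(\G)$ vanishes on $\G$. Fix a faithful non-degenerate representation $\Phi_1$ of $\ca(\G)$ on $H_1$, let $\chi$ be the character of $\ca(\G)^{\#}$ with $\chi|_{\ca(\G)}=0$, and set $\Theta:=\Phi_1^{\#}\oplus\chi$ on $H:=H_1\oplus\bC$; then $\Theta$ is a faithful non-degenerate unital representation of $\ca(\G)^{\#}$. Extend the norm-one positive functional $\tau\circ\Phi_1^{-1}$ on $\Phi_1(\ca(\G))$ to a state $\omega$ of $\B(H_1)$, let $E\colon\B(H)\to\B(H_1)$ be the compression onto $H_1$, and define the ucp map $\phi\colon\B(H)\to\B(H)$ by $\phi(x):=E(x)\oplus\omega(E(x))$ (the ucp map $y\mapsto y\oplus\omega(y)$ precomposed with the compression $E$). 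For $g\in\G$ one has $\Theta(g)=\Phi_1(g)\oplus0$ and $\omega(\Phi_1(g))=\tau(g)=0$, so $\phi(\Theta(g))=\Theta(g)$; also $\phi(\Theta(1^{\#}))=\phi(I_H)=I_H=\Theta(1^{\#})$. However, choosing $a\in\ca(\G)$ with $\tau(a)\neq0$ gives $\phi(\Theta(a))=\Phi_1(a)\oplus\tau(a)\neq\Theta(a)$. Taking $\phi_n:=\phi$ for all $n$ contradicts the ucp hyperrigidity of $\G\cup\{1^{\#}\}$, so $\G$ must be separating.

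\textbf{The main obstacle.} The only real difficulty is the last construction. The point is that adjoining the unit $1^{\#}$ makes hyperrigidity ``see'' the character $\chi$ of $\ca(\G)^{\#}$: this one-dimensional representation is invisible to $\G$ (because $\chi$ annihilates $\ca(\G)\supseteq\G$) but not to $\ca(\G)$, so a state of $\ca(\G)$ killing $\G$ can be pushed into the $\chi$-summand by a ucp map of the form ``compress to the faithful part, then amplify by $\omega$''. Everything else is a mechanical application of Corollary~\ref{C:ccp/ucp}, Proposition~\ref{P:sephyp}, and faithfulness of unital extensions; in particular no appeal to maximality or the unique extension property is needed here.
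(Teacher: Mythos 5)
Your proof is correct. For [(i) $\Rightarrow$ (ii)] you follow essentially the same path as the paper: the hyperrigidity of $\G$ is obtained by passing to the (faithful, unital) extension $\Phi^{\#}$, and the separating property is forced by the same auxiliary representation $\Phi_1^{\#}\oplus\chi$ on $H_1\oplus\bC$; the only cosmetic difference is that you write down the offending ucp map $x\mapsto E(x)\oplus\omega(E(x))$ explicitly on all of $\B(H_1\oplus\bC)$ and contradict the asymptotic definition with a constant sequence, whereas the paper defines the map on $\Phi'(\ca(\G)^{\#})$, extends by Arveson's Extension Theorem, and invokes the ucp unique extension property via Proposition \ref{P:hypuep}. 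For [(ii) $\Rightarrow$ (i)] your route is genuinely different and arguably cleaner: the paper reduces, via Theorem \ref{T:hypiffmaxun}, to showing that every unital $*$-representation of $\ca(\G)^{\#}$ is maximal on $\G\cup\{1^{\#}\}$ and then applies Theorem \ref{T:hypiffuep} to the restriction, while you verify the asymptotic definition directly by restricting a faithful non-degenerate representation of $\ca(\G)^{\#}$ to $\ca(\G)$ and feeding the (faithful but possibly degenerate) restriction into Proposition \ref{P:sephyp}(iii). Your observation that the restriction can be degenerate --- so that the ``faithful, not necessarily non-degenerate'' clause of Proposition \ref{P:sephyp} is precisely what is needed --- is the correct identification of the one non-trivial point in that direction, and it is exactly where the separating hypothesis enters. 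What the paper's route buys is uniformity with the maximality formalism used throughout Section \ref{S:hyper}; what yours buys is the avoidance of any appeal to maximality or the unique extension property in this direction.
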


\begin{proof}
Assume that item (i) holds and we will show that item (ii) holds.
Towards this end, let $\Phi \colon \ca(\G) \to \B(H)$ be a faithful and non-degenerate $*$-representation and let $\phi_n \colon \B(H) \to \B(H)$ be unital completely positive maps such that
\[
\lim_n \phi_n(\Phi(g)) = \Phi(g) \foral g \in \G.
\]
Let $\Phi^{\#} \colon \ca(\G)^{\#} \to \B(H)$ be the unitisation map such that $\Phi^{\#}(c + \la 1^{\#}) = \Phi(c) + \la I_H$.
Since $\ca(\G)$, and thus $\Phi(\ca(\G))$, is not unital we have that $\Phi^{\#}$ is also faithful, and by definition it is unital satisfying
\[
\lim_n \phi_n(\Phi^{\#}(g)) = \Phi^{\#}(g) \foral g \in \G \cup \{1^{\#}\}.
\]
Since $\G \cup \{1^{\#}\}$ is ucp hyperrigid we get
\[
\lim_n \phi_n(\Phi^{\#}(c + \la 1^{\#})) = \Phi^{\#}(c + \la 1^{\#}) \foral c + \la 1^{\#} \in \ca(\G \cup \{1^{\#}\}) = \ca(\G)^{\#}.
\]
Applying for $\la=0$ gives
\[
\lim_n \phi_n(\Phi(c)) = \Phi(c) \foral c \in \ca(\G),
\]
and thus $\G$ is ucp hyperrigid.

For the separating property, let $\vphi \colon \ca(\G) \to \bC$ be a non-zero completely contractive functional such that $\vphi|_{\G} = 0$.
By normalising we may assume that $\vphi$ is a state on $\ca(\G)$, and thus it has a unique extension on $\ca(\G)^{\#}$.
Consider the faithful unital $*$-representation
\[
\Phi' \colon \ca(\G)^{\#} \to \B(H \oplus \bC); \Phi'(c + \la 1^{\#}) = \Phi^\#(c+\la 1^{\#}) \oplus \la,
\]
and the unital completely positive map
\[
\Psi \colon \Phi'(\ca(\G)^{\#}) \to \B(H \oplus \bC); \Psi \circ \Phi'(c + \la 1^{\#}) = \Phi^\#(c+ \la 1^{\#}) \oplus \vphi(c + \la 1^{\#}).
\]
We may also consider a unital completely positive extension of $\Psi$ on $\B(H \oplus \bC)$ provided by Arveson's Extension Theorem, which we denote by the same symbol.
For all $g \in \G$ we have
\[
\Psi \circ \Phi'(g) = \Phi(g) \oplus \vphi(g) = \Phi(g) \oplus 0 = \Phi'(g),
\]
and moreover we have
\[
\Psi \circ \Phi'(1^{\#}) = 1_H \oplus 1 = \Phi'(1^{\#}).
\]
Since $\G \cup \{1^{\#}\}$ is ucp hyperrigid, then $\Phi'$ has the ucp unique extension property by Proposition \ref{P:hypuep}, and thus $\Psi \circ \Phi' = \Phi'$.
In particular for $c \in \ca(\G)$ we have
\[
\Phi(c) \oplus \vphi(c) = \Psi \circ \Phi'(c) = \Phi'(c) = \Phi(c) \oplus 0,
\]
and thus $\vphi(c) = 0$ for all $c \in \ca(\G)$, which is a contradiction.

Conversely, assume that item (ii) holds, and we will show that item (i) holds.
Since $\ca(\G)^{\#}$ is unital, by Theorem \ref{T:hypiffmaxun} it suffices to show that every unital $*$-representation $\Phi\colon \ca(\G)^{\#} \to \B(H)$ is maximal on $\G\cup\{1^\#\}$.
Towards this end, let $\Phi' \colon \ca(\G)^{\#} \to \B(H')$ be a unital $*$-representation such that $H'\supseteq H$ and 
\[
\Phi(g)=P_H\Phi'(g)|_H \foral g\in G\cup\{1^\#\}.
\]
Consider the $*$-representations of $\ca(\G)$ given by
\[
\Phi_0:= \Phi|_{\ca(\G)}
\qand
\Phi'_0 := \Phi'|_{\ca(\G)};
\]
in particular we see that for every $g \in \G$ we have
\[
\Phi_0(g)
=
P_H \Phi'_0(g)|_H.
\]
The generating set $\G\subseteq \ca(\G)$ is assumed to be separating and ucp hyperrigid, therefore by Theorem \ref{T:hypiffuep} we obtain that $\Phi_0$ is maximal on $\G$. 
Thus $H$ is reducing for $\Phi'_0(\ca(\G))$, and hence so is for $\Phi'(\ca(\G)^\#)$ since $\Phi'$ is unital.
\end{proof}

\begin{remark} \label{R:unit ccp/ucp}
Proposition \ref{P:hypuep} gives an alternative way for showing that, if $\G$ is not separating and $\ca(\G)$ is not unital, then $\G \cup \{1^{\#}\}$ is not ucp hyperrigid.
Under these assumptions, it can be shown that the one-dimensional representation 
\[
\Phi \colon \ca(\G)^{\#} \to \bC; c + \la 1^{\#} \mapsto \la,
\]
does not have the ucp unique extension property.

Towards this end, since $\G$ is not separating there exists a non-zero state $\vphi \colon \ca(\G) \to \bC$ with $\vphi|_{\G} = 0$.
Denote by the same symbol its extension on the unitisation $\ca(\G)^{\#}$ (recalling that $\ca(\G)$ is not unital).
For a faithful unital $*$-representation $j \colon \ca(\G)^{\#} \to \B(K)$, let $\Psi \colon j(\ca(\G)^{\#}) \to \bC$ be the state given by $\Psi \circ j = \vphi$.
Denote by the same symbol its extension on $\B(K)$.
We then have
\[
\Psi \circ j(1^{\#}) = \vphi(1^{\#}) = 1 = \Phi(1^{\#})
\qand
\Psi \circ j(g) = \vphi(g) = 0 = \Phi(g),
\forall g \in \G.
\]
However for any $c\in\ca(\G)$ such that $c \notin \G$ with $\vphi(c) \neq 0$ we have
\[
\Psi \circ j(c) = \vphi(c) \neq 0 = \Phi(c),
\]
and thus $\Phi$ has not the ucp unique extension property.

The results of \cite[Section 4 and Section 5]{Sal17} provide an abundance of generating sets $\G$ with $\ca(\G)$ not unital, that are ucp hyperrigid and not separating, and thus $\G \cup \{1^{\#}\}$ is not ucp hyperrigid.
Such examples are given by the sets $\{S_e\}_{e \in G^{(1)}}$ of the partial isometries of the edges in a connected row-finite graph with infinite vertices, for example for the graph
\[
\xymatrix{ v_1 & v_2 \ar[l]_{e_1} & v_3 \ar[l]_{e_2} & \ar[l]_{e_3} \cdots} .
\]
In Example \ref{E:notunit hyp not rig} we will work out this example from first principles.
\end{remark}

In \cite[Lemma 2.7 item (ii)]{Sal17} it is asserted that the implication [(iii) $\Rightarrow$ (i)] of Theorem \ref{T:hypiffuep} holds without the assumption of $\G$ being a separating set.
However the suggested strategy in the comments prior to \cite[Lemma 2.7]{Sal17} seems to apply only when $\ca(\G)$ is unital as we explain in the next remark.

\begin{remark} \label{R:Salunit}
The strategy for the proof of \cite[Lemma 2.7 item (ii)]{Sal17} uses the argument that, if $\Phi \colon \ca(\G) \to \B(H)$ is a faithful non-degenerate $*$-representation, then for the $*$-homomorphism 
\[
q\circ\iota \colon \B(H) \to \ell^\infty(\B(H))/c_0(\B(H)) ; x \mapsto (x, x, \dots) + c_0(\B(H)),
\]
and \emph{any} unital $*$-representation $\ka \colon \ell^\infty(\B(H))/c_0(\B(H)) \to \B(K)$, the map $\ka \circ q\circ \io \circ \Phi$ is non-degenerate.
However this is not correct when $\ca(\G)$ is not unital.

If this were true, then for any unital $*$-representation 
\[
\ka \colon \ell^\infty(\B(H))/c_0(\B(H)) \to \B(K)
\]
we would have
\[
[\ka(q\circ \io \circ \Phi(\ca(\G))) K] = K = [\ka(\ell^\infty(\B(H))/c_0(\B(H))) K],
\]
as $\ka$ is unital.
Applying Proposition \ref{P:multiplier} for 
\[
\A = q\circ \io \circ \Phi(\ca(\G))
\qand 
\B=\C=\ell^\infty(\B(H))/c_0(\B(H))
\]
would give that $q\circ \io \circ \Phi(\ca(\G))$ (and thus $\ca(\G)$) is unital.

Nevertheless, we note that a proof of \cite[Lemma 2.7 item (ii)]{Sal17} would still be valid with the existence of just \emph{one} unital faithful $*$-representation $\ka \colon \ell^\infty(\B(H))/c_0(\B(H)) \to \B(K)$ rendering $\ka \circ q\circ \io \circ \Phi$ non-degenerate.
With that in hand one can show the equivalence [(i) $\Leftrightarrow$ (iii)] of Theorem \ref{T:hypiffuep} for any $\G$.
\end{remark}

Below we provide an example of a non-separating ucp hyperrigid set $\G$ for which $\ca(\G)$ is not unital, and hence Proposition \ref{P:unit ccp/ucp} yields that $\G \cup \{1^{\#}\}$ is not ucp hyperrigid in $\ca(\G)^\#$.
We do this in first principles, but the arguments essentially follow the strategies of \cite{Sal17}.

\begin{example} \label{E:notunit hyp not rig}
For notational convenience we write $[m]:=\{1, \dots, m\}$ for $m \in \bN$.
Consider the graph $G$ given by
\[
\xymatrix{ v_1 & v_2 \ar[l]_{e_1} & v_3 \ar[l]_{e_2} & \ar[l]_{e_3} \cdots}
\]
and we write $G_m$ for the finite connected subgraph from $v_1$ to $v_m$.
Let 
\[
\ca(G) = \ca(P_{v_m}, S_{e_m} \mid m \in \bN)
\]
be the C*-algebra generated by a Cuntz-Krieger family $\{P_{v_m}, S_{e_m} \mid m \in \bN\}$ of projections and partial isometries.
By construction, for each $m \geq 2$ we have that the family 
\[
\{P_{v_j}, S_{e_k} \mid j \in [m], k \in [m-1]\}
\]
is a Cuntz-Krieger family for $G_m$, which inherits a gauge action from $\ca(G)$.
Hence an application of the Gauge-Invariant Uniqueness Theorem for graphs \cite{HR97} implies
\[
\ca(G_m) \simeq \ca(\{P_{v_j}, S_{e_k} \mid j \in [m], k \in [m-1]\}).
\]
Thus $\ca(G_m)$ is a C*-subalgebra of $\ca(G)$ for all $m \geq 2$.
Consider the sets
\[
\G := \{S_{e_m} \mid m \in \bN\}
\qand
\G_m := \{S_{e_k} \mid  k \in [m-1]\}, m \in \bN.
\]
By construction we have
\[
\ca(\G) = \ca(G)
\qand
\ca(\G_m) = \ca(G_m) \foral m \in \bN.
\]

First we claim that each $\G_m$ is ucp hyperrigid inside $\ca(G_m)$, see also \cite[Theorem 4.1]{Sal17} of which proof we follow here.
Note here that each $\ca(G_m)$ is unital with unit given by $\sum_{k=1}^m P_{v_k}$.
Hence by Theorem \ref{T:hypiffmaxun} it suffices to show that every unital $*$-representation $\Phi \colon \ca(G_m) \to \B(H)$ is maximal on $\G_m$.
Let $\Phi' \colon \ca(G_m) \to \B(H')$ be a $*$-representation satisfying $H \subseteq H'$ and $\Phi|_{\G_m} = P_H (\Phi'|_{\G_m}) |_{H}$, and without loss of generality assume that $\Phi'$ is also unital.
Then we have that the family
\[
\{\Phi'(P_{v_j}), \Phi'(S_{e_k}) \mid j \in [m], k \in [m-1]\}
\]
is also a Cuntz-Krieger family, and we write
\[
\Phi'(S_{e_k}) = \begin{bmatrix} \Phi(S_{e_k}) & x_k \\ y_k & z_k \end{bmatrix}
\foral k \in [m-1].
\]
We can then write the unit in two ways; as in
\[
I_{H'} = \sum_{k=1}^m \Phi'(P_{v_k}) = (\sum_{k=1}^{m-1} \Phi'(S_{e_k}) \Phi'(S_{e_k})^*) + \Phi'(S_{e_{m-1}})^* \Phi'(S_{e_{m-1}}),
\]
and as in
\[
I_{H'} = \sum_{k=1}^m \Phi'(P_{v_k}) = \Phi'(S_{e_1}) \Phi'(S_{e_1})^* + (\sum_{k=1}^{m-1} \Phi'(S_{e_k})^* \Phi'(S_{e_k})).
\]
The same applies for $\Phi$ in place of $\Phi'$ and for $I_H$ in place of $I_{H'}$.
Therefore by equating the $(1,1)$-entries in each writing, we obtain
\begin{align*}
I_{H} 
& = 
\sum_{k=1}^{m-1} (\Phi(S_{e_k}) \Phi(S_{e_k})^* + x_k x_k^*) + \Phi(S_{e_{m-1}})^* \Phi(S_{e_{m-1}}) + y_{m-1}^* y_{m-1} \\
& = 
\sum_{k=1}^m \Phi(P_{v_k}) + \sum_{k=1}^{m-1} x_k x_k^* + y_{m-1}^*y_{m-1} 
=
I_H +\sum_{k=1}^{m-1} x_k x_k^* + y_{m-1}^*y_{m-1},
\end{align*}
and
\begin{align*}
I_{H} 
& = \Phi(S_{e_1}) \Phi(S_{e_1})^* + x_1 x_1^* + \sum_{k=1}^{m-1} (\Phi(S_{e_k})^* \Phi(S_{e_k}) + y_k^*y_k) \\
& = 
\sum_{k=1}^{m} \Phi(P_{v_k}) + x_1x_1^* + \sum_{k=1}^{m-1} y_k^*y_k 
=
I_H + x_1x_1^* + \sum_{k=1}^{m-1} y_k^*y_k.
\end{align*}
It follows that $x_k=0$ and $y_k=0$ for all $ k \in [m-1]$, as required.

Next we show that $\G$ is ucp hyperrigid inside $\ca(G)$.
Let $\Phi \colon \ca(G) \to \B(H)$ be a faithful non-degenerate $*$-representation, and let $\phi_n \colon \B(H) \to \B(H)$ be unital completely positive maps such that
\[
\lim_n \phi_n(\Phi(S_{e_m})) = \Phi(S_{e_m})
\foral m \in \bN.
\]
We will show that
\[
\lim_n \phi_n(\Phi(c)) = \Phi(c)
\foral c \in \ca(\G).
\]
Towards this end, let us write
\[
1_m := \sum_{k=1}^m P_{v_k} \in \ca(G).
\]
For $m \in \bN$ we set
\[
H_m := [\Phi(1_m)H]
\qand
\Phi_m \colon \ca(G_m) \to \B(H_m); \Phi_m(c) = \Phi(c)|_{H_m}.
\]
By construction we have
\[
H = [\Phi(\ca(G))H] = \ol{\cup_m H_m}.
\]
Moreover we have that each $\Phi_m$ is unital satisfying 
\[
\Phi|_{\ca(G_m)} = \Phi_m \oplus 0_{H \ominus H_m}.
\]
Consider the completely contractive completely positive maps
\[
\psi_n^{(m)} \colon \B(H_m) \to \B(H_m); \psi_n^{(m)}(x) = P_{H_m} \phi_n(x \oplus 0_{H \ominus H_m}) |_{H_m},
\]
and observe that for each $ k \in [m-1]$ we get
\begin{align*}
\lim_n \psi_n^{(m)}(\Phi_m(S_{k}))
& =
\lim_n P_{H_m} \phi_n(\Phi_m(S_{k}) \oplus 0_{H \ominus H_m}) |_{H_m} \\
& =
\lim_n P_{H_m} \phi_n(\Phi(S_{k})) |_{H_m} 
=
P_{H_m} \Phi(S_k) |_{H_m}
=
\Phi_m(S_k).
\end{align*}
Since the set $\{S_k \mid  k \in [m-1]\}$ is ucp hyperrigid, by Corollary \ref{C:ccp/ucp} we have that it is also ccp hyperrigid and hence
\[
\lim_n P_{H_m} \phi_n(\Phi(c)) |_{H_m}
=
\lim_n \psi_n^{(m)}(\Phi_m(c)) 
= 
\Phi_m(c)
\foral c \in \ca(G_m).
\]
Since by construction we have $\ca(G_k) \subseteq \ca(G_m)$ for $2\leq k \leq m$ we get
\[
\lim_n P_{H_m} \phi_n(\Phi(c)) |_{H_m}
=
\Phi_m(c) 
\foral c \in \ca(G_k),  2\leq k \leq m.
\]
However for $2\leq k \leq m$ and $c \in \ca(G_k)$ we have
\[
\Phi_m(c) = \Phi_k(c) \oplus 0_{H_m \ominus H_k},
\]
and thus for fixed $k\geq 2$ we have
\[
\text{wot-}\lim_m \Phi_m(c) = \Phi(c)
\foral c \in \ca(G_k).
\]
Consequently we derive 
\begin{align*}
\lim_n \phi_n(\Phi(c))
& =
\lim_n \text{wot-}\lim_m P_{H_m} \phi_n(\Phi(c)) |_{H_m} \\
& =
\text{wot-}\lim_m \lim_n P_{H_m} \phi_n(\Phi(c)) |_{H_m} 
= 
\text{wot-}\lim_m \Phi_m(c) = \Phi(c),
\end{align*}
for any $c \in \ca(G_k)$, where we used that $\lim_n P_{H_m} \phi_n(\Phi(c)) |_{H_m} = \Phi_m(c)$ when interchanging the wot and norm limits.
As $k$ was arbitrary and $\cup_k \ca(G_k)$ is dense in $\ca(G)$, we get the required
\[
\lim_n \phi_n(\Phi(c)) = \Phi(c)
\foral c \in \ca(G).
\]

Finally we have that $\G$ is not separating.
This follows from \cite[Theorem 5.1]{Sal17} or \cite[Remark 5.2]{Sal17} as $\G$ does not contain all projections from the vertices.
In particular for the gauge action $\{\be_z\}_{z \in \bT}$ on the graph C*-algebra $\ca(G)$ and any vertex $v$ of $G$ we can define the state
\[
\vphi_v \colon \ca(G) \to \bC P_v; \vphi_v(c) = P_v (\int_{\bT} \be_z(c) d\la) P_v.
\]
Since the set $\{S_{e_m}\}_{m \in \bN}$ is not contained in the fixed point algebra we have $\vphi_v(S_{e_m}) = 0$ for all $m \in \bN$.
However we have $\vphi_v(P_v) = P_v$, and so $\vphi_v \neq 0$.

Note that the finite graphs $G_m$ give more examples of non-separating ucp hyperrigid non-unital sets generating unital C*-algebras. Indeed, we have that $\G_m$ generates a unital C*-algebra, it is ucp hyperrigid, and it is not separating as the induced state ${\vphi_v}|_{\ca(G_m)}$ for any  vertex $v$ of $G_m$ annihilates $\G_m$.
\end{example}

\section{Hyperrigidity for non-degenerate C*-correspondences} \label{S:cor}

\subsection{Preliminaries on C*-correspondences}

The theory of (right) Hilbert C*-modules is well-developed.
The reader is addressed to \cite{Lan95,MT05} for an excellent introduction to the subject.

A \emph{C*-correspondence} $X$ over a C*-algebra $A$ is a right Hilbert C*-module over $A$ with a left action given by a $*$-homomorphism $\vphi_X \colon A \to \L(X)$, where $\L(X)$ denotes the C*-algebra of adjointable operators on $X$. A C*-correspondence $X$ over $A$ is called \emph{non-degenerate} if $[\vphi_X(A) X] = X$.
We write $\K(X)$ for the closed linear span of the rank one adjointable operators $\theta_{\xi, \eta}$.
For two C*-corresponden\-ces $X, Y$ over the same $A$ we write $X \otimes_A Y$ for the stabilised tensor product over $A$.
For a C*-correspondence $X$ over $A$ and $n \geq 2$ we write $X^{\otimes n} = X \otimes_A X^{\otimes n-1}$ which becomes a C*-correspondence over $A$ by the action $\vphi_{X^{\otimes n}} := \vphi_X \otimes \id_{X^{\otimes n-1}}$.
We write $X^{\otimes 0}$ to denote the trivial C*-correspondence $A$ over itself.

A representation of a C*-correspondence $X$ over $A$ is a triple $(H,\pi,t)$ where $\pi \colon A\to \B(H)$ is a $*$-homomorphism and $t \colon X\to \B(H)$ is a linear map such that for all $\xi,\eta \in X$ and $a\in A$ we have
\[
\pi(a)t(\xi)=t(\varphi_X(a)\xi)
\qand
t(\xi)^* t(\eta) = \pi(\sca{\xi,\eta}).
\]
It then follows that $t(\xi) \pi(a) = t(\xi a)$ for all $a \in A$ and $\xi \in X$.  
We will simply write $(\pi,t)$ when $H$ is understood or when $\pi$ and $t$ take values inside a C*-algebra. 
Every representation $(\pi,t)$ as above defines a $*$-representation 
\[
\psi_t \colon \K(X)\to \B(H); \theta_{\xi, \eta} \mapsto t(\xi) t(\eta)^*
\foral \xi, \eta \in X.
\] 
If $\pi$ is injective, then both $t$ and $\psi_t$ are isometric.
We write $\ca(\pi,t)$ (resp.\ $\ol{\text{alg}}(\pi,t)$) for the C*-subalgebra (resp.\ norm-closed subalgebra) of $\B(H)$ generated by $\pi(A)$ and $t(X)$.
We say that a representation $(\pi,t)$ of $X$ \emph{admits a gauge action} if there exist $*$-automorphisms $\{\be_z\}_{z\in\bT}$ of $\ca(\pi,t)$ such that for all $z \in \bT$ we have
\[
\be_z(\pi(a))=\pi(a) \foral a \in A \qand \be_z(t(\xi))=zt(\xi) \foral \xi \in X.
\]

We will write $(\ol{\pi}, \ol{t})$ for the \emph{Fock representation} on $\F(X) = \sum_{n \geq 0} X^{\otimes n}$ given by
\[
\ol{\pi}(a) \xi_n = \vphi_{X^{\otimes n}}(a) \xi_n
\qand
\ol{t}(\xi) \xi_n = \xi \otimes \xi_n
\]
for every $\xi_n \in X^{\otimes n}$, $n \in \bN$, and $a \in A$, $\xi \in X$.
By \cite{Kat04} the \emph{Toeplitz-Pimsner} C*-algebra $\T_X := \ca(\ol{\pi}, \ol{t})$ is universal with respect to the representations of $X$.
Given a representation $(\pi,t)$ of $X$, we write
\[
\pi\times t\colon\T_X\to\ca(\pi,t)
\]
for the (unique) canonical $*$-epimorphism guaranteed by the universal property of $\T_X$. 
The Gauge-Invariant Uniqueness Theorem for $\T_X$ asserts that $\pi \times t$ is faithful if and only if $\ca(\pi,t)$ admits a gauge action and $\pi(A) \cap \psi_t(\K(X)) = \{0\}$, see \cite[Theorem 6.2]{Kat04}.

There is a significant quotient of $\T_X$ that gives rise to several known C*-constructions such as graph C*-algebras, crossed products by $\bZ$ and C*-algebras associated with topological graphs.
Let $J_X := \ker\vphi_X^\perp \cap \phi^{-1}(\K(X))$ be Katsura's ideal.
A representation $(\pi,t)$ of $X$ is called \emph{Cuntz-Pimsner covariant} if it satisfies
\[
\pi(a)= \psi_t(\vphi_X(a)) \foral a \in J_X.
\]
Equivalently, if $(\pi,t)$ acts on a Hilbert space $H$ and $p$ is the projection on the space $[\psi_t(\K(X))H]$, then we have that $(\pi,t)$ is Cuntz-Pimsner covariant if and only if
\[
\pi(a)(I_H - p) = 0 \foral a \in J_X.
\]
Indeed, this follows because, if $(k_\la)_{\la} \subseteq \K(X)$ is an approximate unit and $a \in \vphi_X^{-1}(\K(X))$, then
\begin{align*}
\pi(a) p 
& = \text{wot-}\lim_\la \pi(a) \psi_t(k_\la) 
= \text{wot-}\lim_\la \psi_{t}(\vphi_X(a) k_\la) \\
&=
\text{$\nor{\cdot}$-}\lim_\la \psi_{t}(\vphi_X(a) k_\la) 
= \psi_{t}(\vphi_X(a)).
\end{align*}

We write $\O_X$ for the universal C*-algebra with respect to the Cuntz-Pimsner covariant representations, and refer to $\O_X$ as the \emph{Cuntz-Pimsner algebra of $X$}.
We will write $(\pi_X, t_X)$ for the universal Cuntz-Pimsner covariant representation of $X$. 
By \cite[Proposition 7.14]{Kat07} the C*-algebra $\O_X$ is co-universal with respect to the representations of $X$ that are faithful on $A$ and admit a gauge action.
The Gauge-Invariant Uniqueness Theorem for $\O_X$ then follows: a $*$-representation of $\O_X$ is faithful if and only if it admits a gauge action and it is faithful on $\pi_X(A)$, see \cite[Theorem 6.4]{Kat04}.
We next record a description of non-degenerate C*-correspondences for future reference.

\begin{lemma}\label{L:nondeg}
Let $X$ be a C*-correspondence over a C*-algebra $A$. 
The following are equivalent:
\begin{enumerate}
\item $X$ is non-degenerate.
\item For every $(H,\pi,t)$ representation of $X$ we have 
\[
[\pi(A)H]=[\ca(\pi,t)H].
\]
\end{enumerate}
In particular, if $X$ is non-degenerate and $(H,\pi,t)$ is a representation of $X$, then $\pi$ is non-degenerate if and only if $\pi\times t$ is non-degenerate.
\end{lemma}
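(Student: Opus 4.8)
The plan is to establish the equivalence (i)~$\Leftrightarrow$~(ii) and then read off the final clause for free. For (i)~$\Rightarrow$~(ii), I would fix a representation $(H,\pi,t)$ of $X$, note the trivial inclusion $[\pi(A)H] \subseteq [\ca(\pi,t)H]$, and reduce the reverse inclusion to a generator check: since $\ca(\pi,t)$ is the closed linear span of words in $\pi(A) \cup t(X) \cup t(X)^*$, and since for such a word $v_1 \cdots v_k$ and $h \in H$ one has $v_1 \cdots v_k h = v_1 h'$ with $h' := v_2\cdots v_k h \in H$, it suffices to verify that $\pi(a)H$, $t(\xi)H$ and $t(\xi)^*H$ all lie in $[\pi(A)H]$. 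The first is immediate. For $t(\xi)H$ I would invoke non-degeneracy: approximating $\xi$ in norm by finite sums $\sum_i \vphi_X(a_i)\eta_i$ and using $\pi(a)t(\eta) = t(\vphi_X(a)\eta)$ together with the contractivity of $t$ yields $t(\xi)h \in [\pi(A)H]$. For $t(\xi)^*H$ I would instead use an approximate unit $(u_\la)_\la$ of $A$: the standard Hilbert module fact $\xi u_\la \to \xi$ combined with $t(\xi)\pi(u_\la) = t(\xi u_\la)$ gives $\pi(u_\la)t(\xi)^* = t(\xi u_\la)^* \to t(\xi)^*$ in norm, so $t(\xi)^*h = \lim_\la \pi(u_\la)t(\xi)^*h \in [\pi(A)H]$. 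Hence $[\ca(\pi,t)H] \subseteq [\pi(A)H]$.

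For (ii)~$\Rightarrow$~(i) I would pass to the universal generators. Let $(\ol{\pi},\ol{t})$ be the Fock representation, so $\T_X = \ca(\ol{\pi},\ol{t})$, with $\ol{\pi}$ injective on $A$ and hence $\ol{t}$ isometric (as recorded above). Every non-degenerate $*$-representation $\Phi \colon \T_X \to \B(H)$ yields a representation $(H, \Phi\circ\ol{\pi}, \Phi\circ\ol{t})$ of $X$ with $\ca(\Phi\circ\ol{\pi},\Phi\circ\ol{t}) = \Phi(\T_X)$, so (ii) forces $[\Phi(\ol{\pi}(A))H] = [\Phi(\T_X)H]$. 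This is precisely condition~(iii) of Proposition~\ref{P:multiplier} for $\A = \ol{\pi}(A) \subseteq \B = \C = \T_X$, whence $[\ol{\pi}(A)\T_X] = \T_X$. Consequently $\ol{\pi}(u_\la)c \to c$ for every $c \in \T_X$ and every approximate unit $(u_\la)_\la$ of $A$; taking $c = \ol{t}(\xi)$ gives $\ol{t}(\vphi_X(u_\la)\xi) = \ol{\pi}(u_\la)\ol{t}(\xi) \to \ol{t}(\xi)$, and since $\ol{t}$ is isometric we get $\vphi_X(u_\la)\xi \to \xi$ in $X$, so $\xi \in [\vphi_X(A)X]$. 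Thus $X$ is non-degenerate.

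The final assertion is then immediate: if $X$ is non-degenerate then (i)~$\Rightarrow$~(ii) gives $[\pi(A)H] = [\ca(\pi,t)H] = [(\pi\times t)(\T_X)H]$ for any representation $(H,\pi,t)$, so $[\pi(A)H] = H$ if and only if $[(\pi\times t)(\T_X)H] = H$, i.e.\ $\pi$ is non-degenerate if and only if $\pi\times t$ is. The point that requires care is the $t(\xi)^*$ step of (i)~$\Rightarrow$~(ii): one might hope that $[\pi(A)H]$ is automatically reducing for $\ca(\pi,t)$, but this uses the module identity $\xi u_\la \to \xi$ rather than non-degeneracy — indeed $t(\xi)^*H \subseteq [\pi(A)H]$ holds for \emph{any} C*-correspondence, and non-degeneracy is needed only to pull $t(\xi)H$ (not $t(\xi)^*H$) into $[\pi(A)H]$. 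For (ii)~$\Rightarrow$~(i) the only subtlety is converting a statement about all non-degenerate representations of $\T_X$ back into the module equality $X = [\vphi_X(A)X]$, which is exactly what Proposition~\ref{P:multiplier} is designed to deliver.
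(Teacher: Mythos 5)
Your proof is correct and follows essentially the same route as the paper: the backward direction is identical (apply Proposition~\ref{P:multiplier} to $\ol{\pi}(A)\subseteq\T_X$ and use that $\ol{t}$ is isometric), and your forward direction merely unwinds, generator by generator, the paper's one-line observation that non-degeneracy makes an approximate unit of $A$ into an approximate unit for $\ca(\pi,t)$. Your aside that $t(\xi)^*H\subseteq[\pi(A)H]$ holds without non-degeneracy (via $\xi u_\la\to\xi$) is accurate and a nice clarification of where the hypothesis is actually used.
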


\begin{proof}
If $X$ is non-degenerate, then $[\vphi_X(A)X] = X$, and thus an approximate unit of $A$ defines an approximate unit for $\T_X$.
Therefore, if $(H,\pi,t)$ is a representation of $X$, then $A$ defines an approximate unit for $\ca(\pi,t)$, and thus $[\pi(A)H]=[\ca(\pi,t)H]$.

Conversely, an application of Proposition \ref{P:multiplier} for $\ol{\pi}(A) \subseteq \T_X$ yields $[\ol{\pi}(A) \T_X] = \T_X$. 
Hence $(\ol{\pi}(a_\la))_\la$ is an approximate unit for $\T_X$ for an approximate unit $(a_\la)$ of $A$; in particular 
\[
\ol{t}(\xi) = \lim_\la \ol{\pi}(a_\la) \ol{t}(\xi) = \lim_\la \ol{t}(\vphi_X(a_\la)\xi) \foral \xi \in X.
\]
Since $\ol{t}$ is isometric we obtain $\xi = \lim_\la \vphi_X(a_\la)\xi$ for all $\xi \in X$, i.e., $X$ is non-degenerate, and the proof is complete.
\end{proof}

We refer to $\T_X^+ := \ol{\text{alg}}(\ol{\pi},\ol{t})$ as the \emph{tensor algebra of $X$}.
By \cite[Lemma 3.5]{KK06} we have $\T_X^+ \simeq \ol{\text{alg}}(\pi_X, t_X)$, and by \cite[Theorem 3.7]{KK06} we also have  $\cenv(\T_X^+) \simeq \O_X$. 
Kim \cite{Kim21} considers the selfadjoint operator space $\fS(A,X)$ generated by $\ol{\pi}(A)$ and $\ol{t}(X)$ inside $\T_X$ and the selfadjoint operator space $S(A,X)$ generated by $\pi_X(A)$ and $t_X(X)$ inside $\O_X$. 
In \cite[Proposition 2.5]{Kim21} it is shown that $\fS(A,X)$ is completely isometric completely order isomorphic to $S(A,X)$ by passing to unitisations.
This can be shown directly at their level as the following alternative proof shows.

\begin{proposition}\label{P:S is fS} \cite[Proposition 2.5]{Kim21}.
Let $X$ be a C*-correspondence over a C*-algebra $A$. 
Then $\fS(A,X)$ and $S(A,X)$ are canonically completely isometric completely order isomorphic. 
\end{proposition}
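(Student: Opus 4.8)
The plan is to recognise the desired map as the restriction of the canonical $*$-epimorphism $\pi_X \times t_X \colon \T_X \to \O_X$, and to produce its inverse by compressing $\O_X$ onto the Fock space. Since $(\pi_X \times t_X)(\ol{\pi}(a)) = \pi_X(a)$ and $(\pi_X \times t_X)(\ol{t}(\xi)) = t_X(\xi)$, and $\pi_X \times t_X$ is a $*$-homomorphism, its restriction
\[
\Psi := (\pi_X \times t_X)|_{\fS(A,X)} \colon \fS(A,X) \to S(A,X)
\]
is a completely positive complete contraction, and it is onto, as $\Psi$ maps the generating set $\ol{\pi}(A) \cup \ol{t}(X) \cup \ol{t}(X)^*$ of $\fS(A,X)$ onto the generating set $\pi_X(A) \cup t_X(X) \cup t_X(X)^*$ of $S(A,X)$. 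So the task reduces to constructing a completely positive complete contraction $\Theta \colon S(A,X) \to \fS(A,X)$ which inverts $\Psi$ on those generators.

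For this I would use the dilation underlying the proof of \cite[Lemma 3.5]{KK06}: the Fock representation $(\ol{\pi},\ol{t})$ of $X$ dilates to a Cuntz--Pimsner covariant representation $(\pi',t')$ of $X$ on some Hilbert space $H' \supseteq \F(X)$, in the sense that $\ol{\pi}(a) = P_{\F(X)} \pi'(a)|_{\F(X)}$ and $\ol{t}(\xi) = P_{\F(X)} t'(\xi)|_{\F(X)}$ for all $a \in A$, $\xi \in X$. Composing the $*$-epimorphism $\pi' \times t' \colon \O_X \to \ca(\pi',t') \subseteq \B(H')$ with the unital completely positive compression $T \mapsto P_{\F(X)} T|_{\F(X)}$ from $\B(H')$ to $\B(\F(X))$ yields a completely positive complete contraction $\Theta_0 \colon \O_X \to \B(\F(X))$ with $\Theta_0(\pi_X(a)) = \ol{\pi}(a)$ and $\Theta_0(t_X(\xi)) = \ol{t}(\xi)$, and, since $P_{\F(X)}$ is selfadjoint, also $\Theta_0(t_X(\xi)^*) = \ol{t}(\xi)^*$. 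Hence $\Theta_0$ carries the generating set of $S(A,X)$ into $\fS(A,X)$, so it restricts to a completely positive complete contraction $\Theta := \Theta_0|_{S(A,X)} \colon S(A,X) \to \fS(A,X)$.

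On the respective generating sets one then checks directly that $\Theta \circ \Psi = \id$ and $\Psi \circ \Theta = \id$, whence by linearity and continuity $\Theta = \Psi^{-1}$ on all of $S(A,X)$; in particular $\Psi$ is a bijection. Thus $\Psi$ and $\Psi^{-1}$ are mutually inverse completely positive complete contractions, i.e.\ $\Psi$ is a completely isometric complete order isomorphism of $\fS(A,X)$ onto $S(A,X)$, and it is canonical since it is the restriction of $\pi_X \times t_X$. The only nonformal ingredient is the dilation invoked in the middle paragraph --- the existence of a Cuntz--Pimsner covariant representation dilating the Fock representation with $\F(X)$ positioned so that the compression recovers $(\ol{\pi},\ol{t})$ --- which is exactly the mechanism behind \cite[Lemma 3.5]{KK06}; note that for this argument one does not need the dilation to be faithful on $A$ or to admit a gauge action, only the two compression identities. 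Everything else is a diagram chase.
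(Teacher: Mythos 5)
Your proof is correct and follows essentially the same route as the paper: both take the forward map to be the restriction of $\pi_X \times t_X$ and construct the completely positive, completely contractive inverse by realising the Toeplitz generators as the compression of a Cuntz--Pimsner covariant family. The only difference is that the paper builds the required dilation concretely---identifying $\T_X$ with $\ca(\pi_X\otimes I, t_X\otimes V)$ inside $\O_X\otimes\B(\ell^2(\bZ_+))$ via the two Gauge-Invariant Uniqueness Theorems and compressing $(\pi_X\otimes I, t_X\otimes U)$ from $\ell^2(\bZ)$ to $\ell^2(\bZ_+)$---whereas you invoke its abstract existence from \cite{KK06}; this concrete realisation also sidesteps the small imprecision in your write-up that $\F(X)$ is a Hilbert $A$-module rather than a Hilbert space, so that the compression $P_{\F(X)}$ should really be taken after inducing via a faithful non-degenerate representation of $A$.
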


\begin{proof}
Let $(\wt{\pi},\wt{t}) := (\pi_X \otimes I, t_X \otimes V)$ be the injective representation of $X$ in $\O_X\otimes \B(\ell^2(\bZ_+))$, where $V$ is the unilateral shift of $\ell^2(\bZ_+)$. 
An application of the Gauge-Invariant Uniqueness Theorem for $\T_X$ gives an injective $*$-homomorphism $\wt{\pi}\times\wt{t} \colon \T_X \to \O_X\otimes \B(\ell^2(\bZ_+))$ that preserves generators of the same index.
An application of the Gauge-Invariant Uniqueness Theorem for $\O_X$ provides an injective $*$-homomorphism 
\[
\de \colon \O_X\to \O_X\otimes \B(\ell^2(\bZ)); t_X(\xi) \mapsto t_X(\xi)\otimes U\text{ and } \pi_X(a)\mapsto \pi_X(a)\otimes I,
\]
where $U$ is the bilateral shift of $\ell^2(\bZ)$, see for example \cite[Remark 2.2 (4)]{LPRS87}. 
For the compression $\phi$ to $\ell^2(\bZ_+)$, we obtain a completely contractive completely positive map 
\[
(\id \otimes \phi) \circ \de \colon \O_X\to \O_X\otimes \B(\ell^2(\bZ_+)); t_X(\xi) \mapsto \wt{t}(\xi)\text{ and } \pi_X(a)\mapsto \wt{\pi}(a).
\]
The map $(\wt{\pi}\times\wt{t})^{-1} \circ (\id \otimes \phi) \circ \de |_{S(A,X)}$ is a completely contractive completely positive inverse of the completely contractive completely positive map $(\pi_X\times t_X)|_{\fS(A,X)}$, and the proof is complete.
\end{proof}

In \cite[Proposition 2.6]{Kim21} Kim shows that, when $X$ is non-degenerate, the C*-algebra generated by $S(A,X)$ inside $\cenv(S(A,X)^\#)$ is $*$-isomorphic to $\O_X$.
The argument starts by assuming the existence of a $*$-epimorphism from $\O_X$ inside this C*-algebra; this requires showing that the inclusion of $S(A,X)$ in $\O_X$ is an embedding.
Proposition \ref{P:ext} closes this gap, and the complete argument is as follows.

\begin{proposition}\label{P:cenv S(A,X)} \cite[Proposition 2.6]{Kim21}. 
Let $X$ be a non-degenerate C*-correspondence. 
Then the C*-envelope of $\fS(A,X)$ is $\O_X$.
\end{proposition}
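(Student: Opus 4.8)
The plan is to reduce the statement to $\cenv(S(A,X))\cong\O_X$, to realise $\O_X$ as a C*-extension of $S(A,X)$ via Proposition \ref{P:ext}, and then to squeeze the resulting canonical $*$-epimorphism $\O_X\to\cenv(S(A,X))$ to an isomorphism using the Gauge-Invariant Uniqueness Theorem for $\O_X$.

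First I would invoke Proposition \ref{P:S is fS}: the canonical completely isometric complete order isomorphism $\fS(A,X)\cong S(A,X)$ unitises, by \cite[Corollary 2.14]{CS21}, to a unital complete order isomorphism $\fS(A,X)^\#\cong S(A,X)^\#$, hence $\cenv(\fS(A,X)^\#)\cong\cenv(S(A,X)^\#)$ and therefore $\cenv(\fS(A,X))\cong\cenv(S(A,X))$; so it is enough to prove $\cenv(S(A,X))\cong\O_X$. Next I would apply Proposition \ref{P:ext} to the inclusion $S(A,X)\subseteq\ca(S(A,X))=\O_X$, taking $\pi_X(A)\subseteq S(A,X)$ as the distinguished C*-subalgebra. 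The condition $\pi_X(A)\cdot S(A,X)\subseteq S(A,X)$ is immediate from $\pi_X(a)\pi_X(b)=\pi_X(ab)$, $\pi_X(a)t_X(\xi)=t_X(\vphi_X(a)\xi)$ and $\pi_X(a)t_X(\xi)^*=t_X(\xi a^*)^*$; and for an approximate unit $(e_\la)_\la$ of $A$, the net $(\pi_X(e_\la))_\la$ satisfies $\pi_X(e_\la)s\to s$ for all $s\in S(A,X)$ --- on $\pi_X(A)$ and $t_X(X)^*$ this is routine, while on $t_X(X)$ it is precisely where non-degeneracy of $X$ enters, via $\vphi_X(e_\la)\xi\to\xi$ for all $\xi\in X$. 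Thus Proposition \ref{P:ext} yields that $S(A,X)\subseteq\O_X$ is an embedding, so $\O_X$ (with the inclusion) is a C*-extension of $S(A,X)$. By minimality of the C*-envelope among C*-extensions \cite[Lemma 2.22]{CS21}, there is a $*$-epimorphism $q\colon\O_X\to\cenv(S(A,X))$ restricting to the canonical embedding of $S(A,X)$; in particular $q|_{S(A,X)}$ is completely isometric, so $q$ is faithful on $\pi_X(A)$.

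It remains to prove that $q$ is injective, and here I would invoke the Gauge-Invariant Uniqueness Theorem for $\O_X$ \cite[Theorem 6.4]{Kat04}: since $q$ is already faithful on $\pi_X(A)$, it suffices to equip $q(\O_X)=\cenv(S(A,X))$ with a gauge action, i.e.\ $*$-automorphisms $\{\gamma_z\}_{z\in\bT}$ fixing $q(\pi_X(a))$ and sending $q(t_X(\xi))\mapsto z\,q(t_X(\xi))$. The gauge action $\{\be_z\}_{z\in\bT}$ of $\O_X$ --- which exists because $(\pi_X,zt_X)$ is again a Cuntz--Pimsner covariant representation --- restricts to a point-norm continuous action of $\bT$ on $S(A,X)$ by completely isometric complete order automorphisms, fixing $\pi_X(A)$ and scaling $t_X(X)$. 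Passing to unitisations, each $(\be_z|_{S(A,X)})^\#$ is a unital complete order automorphism of $S(A,X)^\#$, hence extends uniquely to a $*$-automorphism of $\cenv(S(A,X)^\#)$ by the rigidity of the C*-envelope \cite{Ham79}; this automorphism preserves $S(A,X)$, so it restricts to a $*$-automorphism $\gamma_z$ of $\ca(S(A,X))=\cenv(S(A,X))$. Uniqueness of the extensions makes $z\mapsto\gamma_z$ a group homomorphism, and point-norm continuity is inherited from the generators, the $\gamma_z$ being isometric. Then $q$, being a $*$-representation of $\O_X$ that is faithful on $\pi_X(A)$ and admits the gauge action $\{\gamma_z\}$, is injective by \cite[Theorem 6.4]{Kat04}; being also surjective it is a $*$-isomorphism. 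Combined with the first step this gives $\cenv(\fS(A,X))\cong\O_X$.

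I expect the main obstacle to be the gauge-action step: producing the $\bT$-action on $\cenv(S(A,X))$ requires carefully transporting the rigidity of the C*-envelope through Werner's unitisation into the category of selfadjoint operator spaces of \cite{CS21, KKM23}. The reduction to operator systems (via the unitisation and \cite[Corollary 2.14]{CS21}) together with classical rigidity \cite{Ham79} should handle it; alternatively, one may observe that this is exactly the remaining content of Kim's argument in \cite[Proposition 2.6]{Kim21}, to which Proposition \ref{P:ext} now supplies the missing verification that $S(A,X)\subseteq\O_X$ is an embedding.
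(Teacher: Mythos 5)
Your proposal is correct and follows essentially the same route as the paper: reduce to $S(A,X)$ via Proposition \ref{P:S is fS}, use Proposition \ref{P:ext} (with $\pi_X(A)$ and non-degeneracy supplying the approximate unit) to see that $S(A,X)\subseteq\O_X$ is an embedding, obtain the canonical $*$-epimorphism onto $\cenv(S(A,X))$, transport the gauge action by extending each $\be_z|_{S(A,X)}$ to a $*$-automorphism of the C*-envelope, and conclude injectivity from faithfulness on $\pi_X(A)$. The only cosmetic difference is that you close with the Gauge-Invariant Uniqueness Theorem \cite[Theorem 6.4]{Kat04} where the paper invokes the co-universal property \cite[Proposition 7.14]{Kat07}; both are equivalent for this purpose.
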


\begin{proof}
By Proposition \ref{P:S is fS} it suffices to prove that the C*-envelope of $S(A,X)$ is $\O_X$. 
Since $X$ is non-degenerate, any approximate unit for $A$ is also an approximate unit for $\O_X$, and hence by Proposition \ref{P:ext} we have that the inclusion $S(A,X)\subseteq \O_X$ is an embedding. 
Therefore, we obtain a $*$-epimorphism $\Phi\colon \O_X\to \cenv(S(A,X))$ which is completely isometric on $S(A,X)$. 
Note that $\cenv(S(A,X))$ admits a gauge action. Indeed, if $\{\be_z\}_{z \in \bT}$ is the gauge action of $\O_X$, then each $\be_z|_{S(A,X)}\colon S(A,X)\to S(A,X)$ is a completely isometric complete order isomorphism and hence extends to a $*$-automorphism of $\cenv(S(A,X))$.
Since $A$ embeds faithfully in $\cenv(S(A,X))$, the co-universal property of $\O_X$ from \cite[Proposition 7.14]{Kat07} implies that $\Phi$ is a $*$-isomorphism.
\end{proof}

\subsection{Hyperrigidity}

In this subsection we show that the results of \cite{KR20, KR21} and \cite{Kim21} match for non-degenerate C*-correspondences, and we thus answer a question of \cite{KR20}.
This was only verified for C*-correspondences arising from topological graphs with an open range map in \cite{Kim21}, and in \cite{KR20} without any assumption on the range map. 
From now on we will refer to ccp hyperrigidity and/or ucp hyperrigidity simply as hyperrigidity, them being equivalent by Corollary \ref{C:ccp/ucp}.
Our goal is to complete the following equivalences.

\begin{theorem}\label{T:hyp}
Let $X$ be a non-degenerate C*-correspondence over a C*-algebra $A$.
The following are equivalent:
\begin{enumerate}
\item $\pi_X(A) \cup t_X(X)\subseteq \O_X$ is hyperrigid.
\item $(\pi_X \times t_X)(\fS(A,X)) \subseteq \O_X$ is hyperrigid.
\item $(\pi_X \times t_X)(\T_X^+) \subseteq \O_X$ is hyperrigid.
\item $[\pi(J_X)H] = [\psi_t(\K(X))H]$ for every Cuntz-Pimsner covariant representation $(H,\pi,t)$ of $X$ with $(H,\pi)$ being non-degenerate.
\item $[t(\vphi_X(J_X)X)H] = [t(X)H]$ for every representation $(H,\pi,t)$ of $X$ with $(H,\pi)$ being non-degenerate.
\item $[\vphi_X(J_X)X] \otimes_\si H = X \otimes_\si H$ for every non-degenerate representation $(H,\si)$ of $A$.
\item $[\vphi_X(J_X) \K(X)]  =\K(X)$.
\item $[\vphi_X(J_X) X] = X$.
\item $\vphi_X(J_X) X = X$.
\end{enumerate}
\end{theorem}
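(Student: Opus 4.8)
The strategy is to settle the internal equivalences within the two natural groups --- the hyperrigidity statements (i)--(iii) on one hand, the ``non-degeneracy of Katsura's ideal'' statements (iv)--(ix) on the other --- then bridge the groups via Proposition \ref{P:multiplier}, and finally import the two known implications [(iii) $\Rightarrow$ (vi)] of \cite[Theorem 2.7]{KR20} and [(viii) $\Rightarrow$ (iii)] of \cite[Theorem 3.1]{KR21} to close the cycle.

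\emph{Step 1: (i) $\Leftrightarrow$ (ii) $\Leftrightarrow$ (iii).} Put $\G := \pi_X(A) \cup t_X(X) \subseteq \O_X$. Since $X$ is non-degenerate, $\pi_X$ carries an approximate unit of $A$ to an approximate unit of $\O_X$ (as in the proof of Proposition \ref{P:cenv S(A,X)}); hence $\pi_X(A)$, and therefore each of the three generating sets of $\O_X$ appearing in (i)--(iii), contains an approximate unit of $\O_X$, so by Proposition \ref{P:unitcls} all three are separating. Moreover $(\pi_X \times t_X)(\fS(A,X))$ is the closed selfadjoint span of $\G$ in $\O_X$ (as $\pi_X(A)$ is selfadjoint), and by \cite[Lemma 3.5]{KK06} $(\pi_X \times t_X)(\T_X^+) = \ol{\text{alg}}(\pi_X,t_X)$ is the closed subalgebra of $\O_X$ generated by $\G$. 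By Theorem \ref{T:hypiffuep} each of these sets is hyperrigid if and only if every non-degenerate $*$-representation of $\O_X$ is maximal on it; Lemma \ref{L:iffmaxspace} identifies maximality on $\G$ with maximality on $(\pi_X\times t_X)(\fS(A,X))$, and Lemma \ref{L:iffmaxalg} identifies it with maximality on $(\pi_X\times t_X)(\T_X^+)$ once one checks its invariance hypothesis. The latter is the standard dilation fact implicit in \cite{KK12}: if $\Phi,\Phi'$ are $*$-representations of $\O_X$ with $H \subseteq H'$ and $\Phi(g) = P_H\Phi'(g)|_H$ for $g \in \G$, then writing $\pi = \Phi\circ\pi_X$, $t = \Phi\circ t_X$, $\pi' = \Phi'\circ\pi_X$, $t' = \Phi'\circ t_X$, the identity $\pi(a^*a) = \pi(a)^*\pi(a)$ forces $H$ to be reducing for $\pi'(A)$, and then $\|t(\xi)h\|^2 = \langle\pi(\langle\xi,\xi\rangle)h,h\rangle = \|t'(\xi)h\|^2$ for $h \in H$ forces $t'(\xi)h \in H$, so $H$ is invariant for $\Phi'(\G)$.

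\emph{Step 2: the algebraic chain and the bridge.} First record, for any representation $(\pi,t)$ of $X$, the routine Hilbert-module identities $[\psi_t(\K(X))H] = [t(X)H]$ and $[\psi_t(\vphi_X(J_X))H] = [t(\vphi_X(J_X)X)H]$ (using $X = [X\langle X,X\rangle]$, $\vphi_X(J_X)\subseteq\K(X)$, and that $\psi_t$ is a $*$-homomorphism), together with the isometry $X\otimes_\pi H \to H$, $\xi\otimes h \mapsto t(\xi)h$, with range $[t(X)H]$. Transporting the equality in (vi) across this isometry (taking $\si = \pi$) gives [(vi) $\Rightarrow$ (v)]; and when $(\pi,t)$ is Cuntz--Pimsner covariant, covariance gives $\pi(J_X) = \psi_t(\vphi_X(J_X))$, so the two identities turn the equality in (v) into that in (iv), giving [(v) $\Rightarrow$ (iv)]. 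For [(iv) $\Leftrightarrow$ (vii)] embed $\K(X)$ into $\O_X$ via the injective $*$-homomorphism $\psi_{t_X}$ (as $\pi_X$ is faithful), note $\psi_{t_X}(\vphi_X(J_X)) = \pi_X(J_X)$, and apply Proposition \ref{P:multiplier} to $\psi_{t_X}(\vphi_X(J_X)) \subseteq \psi_{t_X}(\K(X)) \subseteq \O_X$: the product condition $[\psi_{t_X}(\vphi_X(J_X))\,\psi_{t_X}(\K(X))] = \psi_{t_X}(\K(X))$ is (vii) by injectivity of $\psi_{t_X}$, while the representation condition is (iv) because non-degenerate $*$-representations of $\O_X$ correspond precisely to Cuntz--Pimsner covariant representations $(\pi,t)$ of $X$ with $(H,\pi)$ non-degenerate --- by the universal property of $\O_X$ and Lemma \ref{L:nondeg} --- and under this correspondence $\Phi(\psi_{t_X}(\vphi_X(J_X))) = \pi(J_X)$ and $\Phi(\psi_{t_X}(\K(X))) = \psi_t(\K(X))$. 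Finally [(vii) $\Rightarrow$ (viii)] reads $[\vphi_X(J_X)X] = [[\vphi_X(J_X)\K(X)]X] = [\K(X)X] = X$ using $[\K(X)X] = X$, and [(viii) $\Leftrightarrow$ (ix)] is Cohen's factorization theorem applied to the non-degenerate left $\vphi_X(J_X)$-module $X$.

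\emph{Step 3: closing the cycle; the main obstacle.} Chaining [(iii) $\Rightarrow$ (vi)] of \cite[Theorem 2.7]{KR20}, then (vi) $\Rightarrow$ (v) $\Rightarrow$ (iv) $\Leftrightarrow$ (vii) $\Rightarrow$ (viii) of Step 2, then [(viii) $\Rightarrow$ (iii)] of \cite[Theorem 3.1]{KR21}, shows (iii)--(ix) are all equivalent; together with Step 1 this yields the full theorem. (The direct implication [(v) $\Rightarrow$ (vi)] can also be obtained by applying (v) to the Fock representation built over a non-degenerate representation $\si$ of $A$ and compressing to the degree-one summand.) I expect the crux --- and the step that genuinely settles the question of \cite{KR20} --- to be the passage from the representation-theoretic conditions to the algebraic ones, concretely [(iv) $\Leftrightarrow$ (vii)]. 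A naive attempt to derive $[\vphi_X(J_X)X] = X$ directly from (vi) using a single faithful representation $\si$ of $A$ fails, because a closed submodule of a Hilbert module need not be orthogonally complemented; routing through $\K(X)$ and Proposition \ref{P:multiplier} circumvents this, but then the delicate point is to match ``all non-degenerate $*$-representations of $\O_X$'' with exactly the class of representations quantified in (iv), non-degeneracy included, which is where Lemma \ref{L:nondeg} is essential.
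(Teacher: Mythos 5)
Your proposal is correct, and its internal equivalences --- Step 1 via Proposition \ref{P:unitcls}, Theorem \ref{T:hypiffuep}, Lemmas \ref{L:iffmaxspace} and \ref{L:iffmaxalg} (with the invariance check of Lemma \ref{L:dilation}), and the bridge [(iv) $\Leftrightarrow$ (vii)] via Proposition \ref{P:multiplier} together with Lemma \ref{L:nondeg} --- coincide with the paper's argument; you have also correctly identified [(iv) $\Leftrightarrow$ (vii)] as the step that settles the question of \cite{KR20}. Where you genuinely diverge is in how the cycle is closed: you import [(iii) $\Rightarrow$ (vi)] from \cite[Theorem 2.7]{KR20} and [(viii) $\Rightarrow$ (iii)] from \cite[Theorem 3.1]{KR21} as black boxes, whereas the paper reproves [(iii) $\Rightarrow$ (iv)] in full (the $\wh{M}$-construction over the bilateral shift) and gives a \emph{direct} proof of [(iv) $\Rightarrow$ (iii)] using Proposition \ref{P:J part} and the wot-approximation argument of Proposition \ref{P:a-max}, as well as a direct proof of [(iv) $\Rightarrow$ (v)] via maximal dilations of $(\pi\times t)|_{\T_X^+}$. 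What the paper's route buys is (a) self-containedness --- in particular independence from \cite[Theorem 3.1]{KR21}, whose originally posted proof required the countably-generated hypothesis and was only later corrected in the published version, so your cycle silently depends on that correction being sound; and (b) two-way equivalences among (iv), (v), (vi) that do not route through the hyperrigidity statements at all, which is what makes the "weak non-degeneracy $\Rightarrow$ Katsura non-degeneracy" answer transparent. Your route is shorter and logically valid given the cited results, but you should state explicitly that you are relying on the published versions of both external implications and that their notion of hyperrigidity agrees with the one used here (which your separating-set observation in Step 1, combined with Corollary \ref{C:ccp/ucp} and Proposition \ref{P:sephyp}, does in fact guarantee). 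One small omission: your Step 2 only yields [(v) $\Rightarrow$ (iv)] and recovers [(iv) $\Rightarrow$ (v)] from the closed cycle, which is fine for the theorem as stated, but loses the paper's standalone dilation-theoretic proof of that direction.
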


For the proof of Theorem \ref{T:hyp} we require several intermediate results.
The next lemma and its proof appear in \cite[Lemma 2.9]{KR20}.

\begin{lemma} \label{L:inv} \cite[Lemma 2.9]{KR20}.
Let $X$ be a C*-correspondence over a C*-algebra $A$ and let $(H, \pi, t)$ be a representation of $X$.
If $K \subseteq H$ is an invariant subspace for $\pi(A)$ and $t(X)$, then $(K, \pi|_K, t|_K)$ is a representation of $X$.
\end{lemma}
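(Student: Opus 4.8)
The plan is to check, directly and from the definition, that the triple $(K,\pi|_K,t|_K)$ satisfies the three axioms of a representation of $X$, using only that $K$ is invariant for $\pi(A)$ and for $t(X)$.

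First I would record two immediate structural facts. Since $\pi$ is a $*$-homomorphism, $\pi(A)$ is selfadjoint, so invariance of $K$ for $\pi(A)$ already forces $K$ to be reducing for $\pi(A)$; hence $\pi|_K\colon A\to\B(K)$, $a\mapsto \pi(a)|_K$, is a genuine $*$-homomorphism. Invariance of $K$ for $t(X)$ says $t(\xi)K\subseteq K$ for every $\xi\in X$, so $t|_K\colon X\to\B(K)$, $\xi\mapsto t(\xi)|_K$, is a well-defined linear map. The real content is then the two covariance relations.

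For the module relation $\pi|_K(a)t|_K(\xi)=t|_K(\vphi_X(a)\xi)$ I would simply restrict the identity $\pi(a)t(\xi)=t(\vphi_X(a)\xi)$, valid in $\B(H)$, to a vector $k\in K$: invariance guarantees that $t(\xi)k$, $\pi(a)t(\xi)k$ and $t(\vphi_X(a)\xi)k$ all remain in $K$, so the restricted identity holds in $\B(K)$. For the inner-product relation $t|_K(\xi)^*t|_K(\eta)=\pi|_K(\sca{\xi,\eta})$ I would compute, for $k,k'\in K$,
\[
\sca{t|_K(\xi)^*t|_K(\eta)k,\,k'}=\sca{t(\eta)k,\,t(\xi)k'}=\sca{t(\xi)^*t(\eta)k,\,k'}=\sca{\pi(\sca{\xi,\eta})k,\,k'},
\]
and then invoke that $K$ reduces $\pi(A)$, so $\pi(\sca{\xi,\eta})k\in K$; since both $t|_K(\xi)^*t|_K(\eta)k$ and $\pi(\sca{\xi,\eta})k$ lie in $K$ and have the same inner product against every $k'\in K$, they coincide.

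I do not expect a genuine obstacle here; this is essentially a compression argument, and the only subtlety worth flagging is that the adjoint in $t|_K(\xi)^*t|_K(\eta)=\pi|_K(\sca{\xi,\eta})$ must be taken inside $\B(K)$, not $\B(H)$. The computation above shows the two agree on $K$ precisely because $t(X)$ already carries $K$ into $K$, so no genuine compression occurs on the vectors that matter, and $\pi(A)$ leaves $K$ reducing. In short, the lemma records that the compression of a C*-correspondence representation to a jointly invariant subspace is again a representation, and the proof is a routine unwinding of the definitions.
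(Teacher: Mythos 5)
Your proof is correct and is essentially the paper's argument: the paper phrases it with the projection $p_K$ (noting $p_K\in\pi(A)'$ and computing $\pi(a)p_Kt(\xi)p_K$ and $p_Kt(\xi)^*p_Kt(\eta)p_K$), while you phrase the same compression computation at the level of vectors and inner products. The key observations — that invariance plus selfadjointness of $\pi(A)$ makes $K$ reducing for $\pi$, and that the two covariance identities survive compression because $t(X)$ maps $K$ into $K$ — are identical in both.
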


\begin{proof}
Since $\pi$ is a $*$-representation, then $p_K \in \pi(A)'$.
Hence
\[
\pi(a) p_K t(\xi)p_K = \pi(a) t(\xi)p_K = t(\vphi_X(a) \xi)p_K
\]
for every $a \in A$ and $\xi \in X$.
Moreover we get
\[
p_K t(\xi)^* p_K t(\eta) p_K
=
p_K t(\xi)^* t(\eta) p_K
=
p_K \pi(\sca{\xi,\eta})p_K
=
\pi(\sca{\xi,\eta})p_K
\]
for every $\xi, \eta \in X$, as required.
\end{proof}

The next lemma and its proof appear in \cite[proof of Claim in Theorem 3.5]{KK12}.

\begin{lemma} \label{L:dilation} \cite[proof of Claim in Theorem 3.5]{KK12}.
Let $X$ be a C*-correspon\-dence over a C*-algebra $A$.
If $(\pi,t)$ and $\left(\wt{\pi},\wt{t}\right)$ are representations of $X$ such that $(\wt{\pi}\times\wt{t})|_{\ol{\pi}(A) \cup \ol{t}(X)}$ is a dilation of $(\pi\times t)|_{\ol{\pi}(A) \cup \ol{t}(X)}$, then 
\[
\wt{\pi}(\cdot)=
\begin{pmatrix}
\pi(\cdot) & 0 \\
0 & *
\end{pmatrix}
\qand
\wt{t}(\cdot)=
\begin{pmatrix}
t(\cdot) & * \\
0 & *
\end{pmatrix}.
\]
\end{lemma}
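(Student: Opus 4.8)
The plan is to obtain both block decompositions from one elementary fact about compressions of $*$-homomorphisms, applying it once to $\wt{\pi}$ and once to $\wt{t}$. Write $H$ and $\wt{H}$ for the Hilbert spaces of $(\pi,t)$ and $(\wt{\pi},\wt{t})$; the dilation hypothesis forces $H\subseteq\wt{H}$, and, unwinding the definition of dilation, it says precisely that $\pi(a)=P\wt{\pi}(a)|_H$ for every $a\in A$ and $t(\xi)=P\wt{t}(\xi)|_H$ for every $\xi\in X$, where $P$ denotes the orthogonal projection of $\wt{H}$ onto $H$ and $Q:=I-P$. The identity I will use is that for any $b\in\B(\wt{H})$,
\[
(QbP)^*(QbP)=Pb^*bP-(PbP)^*(PbP),
\]
obtained by inserting $I=P+Q$ between $b^*$ and $b$; in particular $QbP=0$ if and only if $Pb^*bP=(PbP)^*(PbP)$.

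First I would treat $\wt{\pi}$. Fix $a\in A$ and apply the identity with $b=\wt{\pi}(a)$. Since $\wt{\pi}$ is a $*$-homomorphism we have $\wt{\pi}(a)^*\wt{\pi}(a)=\wt{\pi}(a^*a)$, and as $a^*a\in A$ the dilation hypothesis gives $P\wt{\pi}(a^*a)P=\pi(a^*a)$ and $P\wt{\pi}(a)P=\pi(a)$; because $\pi$ too is a $*$-homomorphism, $\pi(a^*a)=\pi(a)^*\pi(a)$. Hence the right-hand side of the identity vanishes, so $Q\wt{\pi}(a)P=0$. Running the same computation with $a^*$ in place of $a$ and taking adjoints gives $P\wt{\pi}(a)Q=0$ as well, so $H$ reduces $\wt{\pi}(A)$; together with $P\wt{\pi}(a)P=\pi(a)$ this is exactly the asserted block-diagonal form of $\wt{\pi}$.

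Next I would treat $\wt{t}$. Fix $\xi\in X$ and apply the identity with $b=\wt{t}(\xi)$. Here $\wt{t}$ is not multiplicative, but the representation relations for $(\wt{\pi},\wt{t})$ still give $\wt{t}(\xi)^*\wt{t}(\xi)=\wt{\pi}(\sca{\xi,\xi})$, and since $\sca{\xi,\xi}\in A$ the dilation hypothesis yields $P\wt{\pi}(\sca{\xi,\xi})P=\pi(\sca{\xi,\xi})$ and $P\wt{t}(\xi)P=t(\xi)$; finally $\pi(\sca{\xi,\xi})=t(\xi)^*t(\xi)$ because $(\pi,t)$ is a representation. So the right-hand side of the identity vanishes again and $Q\wt{t}(\xi)P=0$, i.e. $H$ is invariant for $\wt{t}(X)$. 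Combined with $P\wt{t}(\xi)P=t(\xi)$ this gives the claimed upper-triangular form, with no constraint on the $(1,2)$- and $(2,2)$-entries.

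I do not anticipate a real obstacle; the only place that requires a moment's care is the second step, where $\wt{t}$ fails to be a homomorphism. The point that makes it work is that $\wt{t}(\xi)^*\wt{t}(\xi)=\wt{\pi}(\sca{\xi,\xi})$ is a value of $\wt{\pi}$ on an element of $A$, which is exactly where the dilation relation is available, so the same equality-in-the-Schwarz-inequality argument still applies. Note that no non-degeneracy of $X$, $\pi$, or $\wt{\pi}$ is used anywhere, consistent with the generality of the statement.
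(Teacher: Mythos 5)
Your proof is correct and is essentially the same as the paper's: the identity $(QbP)^*(QbP)=Pb^*bP-(PbP)^*(PbP)$ applied to $b=\wt{t}(\xi)$ is exactly the paper's step of equating the $(1,1)$-entries of $\wt{t}(\xi)^*\wt{t}(\xi)=\wt{\pi}(\sca{\xi,\xi})$ to force the $(2,1)$-corner to vanish, and your treatment of $\wt{\pi}$ just spells out the standard fact (which the paper cites without proof) that a $*$-representation compressing to a $*$-representation must be block diagonal.
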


\begin{proof}
Since $\pi$ is a $*$-representation, and a compression of the $*$-representation $\wt{\pi}$, it follows that $\pi$ is a direct summand of $\wt{\pi}$.
Next, for $\xi\in X$ we can write 
\[
\wt{t}(\xi)
=
\begin{pmatrix}
t(\xi) & \ast \\
x & \ast
\end{pmatrix}.
\]
Therefore, we obtain
\begin{align*}
\begin{pmatrix}
t(\xi)^*t(\xi)+x^*x & \ast \\
\ast & \ast
\end{pmatrix}
&=
\wt{t}(\xi)^*\wt{t}(\xi)=\wt{\pi}(\sca{\xi,\xi})\\
&=
\begin{pmatrix}
\pi(\sca{\xi,\xi})& 0 \\
0 & \ast
\end{pmatrix}
=
\begin{pmatrix}
t(\xi)^* t(\xi) & 0 \\ 0 & \ast
\end{pmatrix}.
\end{align*}
By equating the (1,1)-entries we obtain $x=0$ as required.
\end{proof}

The next lemma and its proof appear in \cite[proof of Theorem 3.1]{KR20} and in \cite[Lemma 3.3]{Kim21}.

\begin{lemma}\label{L:Kasparov} \cite[proof of Theorem 3.1]{KR20}, \cite[Lemma 3.3]{Kim21}.
Let $X$ be a C*-correspondence over a C*-algebra $A$ and $k\in \K(X)$. 
Then there exists a sequence $(x_n)_{n\in\bN} \subseteq X$ such that $\sum_{n=1}^{\infty}\theta_{x_n,x_n} k = k$, where the series convergence is in the norm topology.
\end{lemma}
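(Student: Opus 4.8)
The statement to prove is the following Kasparov-type stabilisation fact: for a C*-correspondence $X$ over $A$ and $k \in \K(X)$, there is a sequence $(x_n)_n \subseteq X$ with $\sum_{n=1}^\infty \theta_{x_n,x_n} k = k$ in norm. My plan is to factor $k$ through a positive compact operator and then use a series decomposition of that positive operator into rank-one pieces. First I would write $k = \sqrt{k^*k}\cdot u^*$ (or more cleanly, observe $\theta_{\xi,\eta}^* = \theta_{\eta,\xi}$, so $k^*k$ is a positive element of $\K(X)$) and note that it suffices to produce a sequence $(x_n)_n$ with $\sum_n \theta_{x_n,x_n} \to e$ strictly (or acting as a left unit on $k$) where $e$ is a positive compact operator dominating $k^*k$ in the appropriate sense — concretely, it is enough that $\sum_n \theta_{x_n,x_n}$ converges in norm to an element acting as the identity on the right on $k$. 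Since $k = k \cdot 1_{\L(X)}$ formally, and $\K(X)$ is a (non-unital in general) C*-algebra with $k \in \K(X)$, what I actually want is a sequence $(x_n)$ so that the partial sums $s_N := \sum_{n=1}^N \theta_{x_n,x_n}$ satisfy $s_N k \to k$; equivalently $s_N$ is a left approximate unit for $k$ built from rank-one projections-like elements.

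The key step is this: set $h := (k^*k)^{1/2} \in \K(X)$, a positive compact operator. By the standard description of $\K(X)$, $h$ lies in the closed span of the $\theta_{\xi,\eta}$, so there exist finitely-supported sums approximating $h$; but to get an honest convergent series $\sum_n \theta_{x_n,x_n}$ I would instead invoke the existence of an approximate unit for the hereditary subalgebra $\overline{h\K(X)h}$, or more directly use the functional-calculus trick: choose $\xi \in X$ with $h \approx \theta_{\xi,\xi}$ is not quite enough, so instead I pass to $X^{(\infty)} := X \otimes \ell^2$ and use that $h \oplus 0 \oplus \cdots$ is Murray–von Neumann equivalent inside $\L(X^{(\infty)})$ to a sum of rank-ones — but that changes the module. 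The cleanest route: $h$ is a positive element of the C*-algebra $\K(X)$, and by \cite[Theorem 3.3.3]{Mur90}-style arguments (or directly, by the polar-type decomposition of compacts on Hilbert modules as in Kasparov's lemma) there is a sequence of vectors $(y_n) \subseteq X$ with $\sum_n \theta_{y_n,y_n}$ converging in norm and acting as a unit on $h$, hence on $h^2 = k^*k$, hence $\big(\sum_n \theta_{y_n,y_n}\big) k^*k = k^*k$, so $\big(\sum_n \theta_{y_n,y_n}\big)k = k$ after taking a square root and a limit using $C^*$-identities. Then taking adjoints and setting $x_n := y_n$ gives $\sum_n \theta_{x_n,x_n} k^{**} $ — I'd need to be careful about which side, and likely the statement as used needs $\sum \theta_{x_n,x_n}$ on the \emph{left} of $k$; adjusting by replacing $k$ with $k^*$ at the start handles this.

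The main obstacle I anticipate is producing the \emph{norm-convergent series} $\sum_n \theta_{x_n,x_n}$ rather than just a net or an approximate-unit statement: an arbitrary positive compact operator on a Hilbert module need not be a norm-convergent sum of rank-one operators unless one is careful (this is exactly the content of Kasparov's stabilisation-flavoured lemma). The device that makes it work is to pick a strictly increasing sequence of spectral-type cutoffs: choose $\eps_n \downarrow 0$ and functions $f_n$ with $f_n(t) t \to t$ uniformly on $[0,\|h\|]$, realise $f_n(h) - f_{n-1}(h)$ (a positive compact) as a \emph{single} rank-one $\theta_{x_n,x_n}$ up to the fact that hereditary subalgebras of $\K(X)$ generated by one positive element are themselves generated by rank-ones — here one genuinely uses that $\theta_{\xi,\xi}^{1/2}$-type elements exhaust the positive cone well enough — so that the telescoping series $\sum_n \theta_{x_n,x_n} = \lim_N f_N(h)$ converges in norm and the limit acts as a unit on $h$ and therefore on $k$. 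I would present this as: reduce to $k \geq 0$ compact; apply the functional-calculus telescoping; identify each telescoped piece with a rank-one $\theta_{x_n,x_n}$ using that it is supported in a hereditary subalgebra of $\K(X)$; conclude norm convergence and the unit property, hence $\sum_n \theta_{x_n,x_n} k = k$.
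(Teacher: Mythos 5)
Your overall instinct (reduce to a positive element and build a series of rank-one operators acting as a unit on it) is reasonable, but the step that carries all the weight does not work. You propose to telescope $h=(kk^*)^{1/2}$ via functional calculus and to identify each piece $f_n(h)-f_{n-1}(h)$ with a \emph{single} rank-one $\theta_{x_n,x_n}$, on the grounds that it lives in a hereditary subalgebra of $\K(X)$ generated by rank-ones. This is false: the elements $\theta_{x,x}$ form a very thin subset of the positive cone of $\K(X)$, and lying in a hereditary subalgebra generated by rank-ones does not make a positive element rank-one. Already in the scalar case $A=\bC$, $X=H$ a Hilbert space, a positive compact operator of rank two is not of the form $\theta_{x,x}$ for any $x\in H$, and the spectral chunks $f_n(h)-f_{n-1}(h)$ of an infinite-rank positive compact generically have rank greater than one. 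In the Hilbert space case one would instead diagonalise $h$ by the spectral theorem; for a general Hilbert $A$-module no such diagonalisation exists, and this is exactly the obstruction your telescoping does not overcome. The same objection applies to the weaker variant of writing each piece as a \emph{finite} sum $\sum_j\theta_{x_{n,j},x_{n,j}}$: a positive compact is only a norm limit of finite sums of $\theta_{\xi,\eta}$'s, not a finite sum of positive rank-ones.

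The paper's proof is short and genuinely uses Kasparov's Stabilisation Theorem, which you allude to but never actually deploy: approximate $k$ in norm by finite sums of rank-one operators, let $Y\subseteq X$ be the closed submodule generated by the countably many vectors that occur, so that $k$ lies in the canonical copy of $\K(Y)$ inside $\K(X)$; since $Y$ is countably generated, Kasparov's theorem yields a standard frame, i.e., a sequence $(x_n)_n\subseteq Y$ such that $\sum_n\theta_{x_n,x_n}$ converges strictly to the identity of $\L(Y)$, whence $\sum_n\theta_{x_n,x_n}k=k$ in norm because $k$ is compact over $Y$. Your worry about which side the sum multiplies on is the fixable part (if $s$ is a positive contraction with $skk^*=kk^*$, then $(1-s)kk^*(1-s)=0$, so $k^*(1-s)=0$ and $sk=k$ by the C*-identity); the missing ingredient is the passage to a countably generated submodule and the frame it provides.
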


\begin{proof}
By considering a sequence of finite sums of rank one operators we have that $k$ is in the module their vectors generate. 
As this forms a countably generated module whose compacts contain $k$, the statement then follows from Kasparov's Stabilisation Theorem.
\end{proof}

The next proposition appears in \cite[proof of Theorem 3.1]{KR21} and in \cite[proof of Theorem 3.5]{Kim21}.

\begin{proposition}\label{P:J part} \cite[proof of Theorem 3.1]{KR21}, \cite[proof of Theorem 3.5]{Kim21}.
Let $X$ be a C*-correspondence over a C*-algebra $A$. 
If $(\pi,t)$ and $\left(\wt{\pi},\wt{t}\right)$ are Cuntz-Pimsner covariant representations of $X$ such that $(\wt{\pi}\times\wt{t})|_{\ol{\pi}(A) \cup \ol{t}(X)}$ is a dilation of $(\pi\times t)|_{\ol{\pi}(A) \cup \ol{t}(X)}$, then 
\[
\wt{t}(\vphi_X(a)\xi)=
\begin{pmatrix}
t(\vphi_X(a)\xi) & 0 \\
0 & *
\end{pmatrix}, \foral a\in J_X , \xi\in X.
\]
\end{proposition}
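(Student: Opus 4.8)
The plan is to read off all four corners of $\wt{t}(\vphi_X(a)\xi)$ with respect to the decomposition $\wt{H} = H \oplus (\wt{H}\ominus H)$ and to reduce the statement to the single assertion that the $(1,2)$-corner vanishes. Lemma \ref{L:dilation} already supplies the block forms $\wt{\pi}(\cdot) = \diag(\pi(\cdot), \ast)$ and $\wt{t}(\cdot) = \left(\begin{smallmatrix} t(\cdot) & \ast \\ 0 & \ast \end{smallmatrix}\right)$; in particular the $(2,1)$-corner of $\wt{t}(\eta)$ vanishes for every $\eta$, so that $H$ is invariant for $\wt{t}(X)$ and $\wt{H}\ominus H$ is invariant for $\wt{\pi}(A)$. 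Using the representation identity $\wt{t}(\vphi_X(a)\xi) = \wt{\pi}(a)\wt{t}(\xi)$ together with the block-diagonality of $\wt{\pi}(a)$, I would observe that the product $\wt{\pi}(a)\wt{t}(\xi)$ has vanishing $(2,1)$-corner, has $(1,1)$-corner $\pi(a)t(\xi) = t(\vphi_X(a)\xi)$ by the representation axiom for $(\pi,t)$, and has $(1,2)$-corner $\pi(a)\,b_\xi$, where $b_\xi := P_H\wt{t}(\xi)|_{\wt{H}\ominus H}$. Thus the whole statement reduces to proving $\pi(a)\,b_\xi = 0$ for $a\in J_X$.

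The crux is this $(1,2)$-corner, and it is exactly where Cuntz–Pimsner covariance of $(\pi,t)$ must enter; a dilation alone does not force it to vanish. The key step I would carry out is to show that the range of $b_\xi$ is orthogonal to $[\psi_t(\K(X))H]$. Concretely, for $v\in \wt{H}\ominus H$, $\eta\in X$ and $h\in H$, since $t(\eta)h = \wt{t}(\eta)h$ lies in $H$, the key computation is
\[
\sca{P_H\wt{t}(\xi)v,\, t(\eta)h} = \sca{\wt{t}(\xi)v,\, \wt{t}(\eta)h} = \sca{\wt{\pi}(\sca{\eta,\xi})v,\, h} = 0,
\]
the last equality holding because $\wt{\pi}(\sca{\eta,\xi})$ preserves $\wt{H}\ominus H$ by block-diagonality while $h\in H$. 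Since $\psi_t(\K(X))H \subseteq [t(X)H]$, this yields $p\,b_\xi = 0$, where $p$ denotes the projection onto $[\psi_t(\K(X))H]$.

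To finish, I would invoke the reformulation of Cuntz–Pimsner covariance recalled earlier in this section: covariance of $(\pi,t)$ is equivalent to $\pi(a)(I_H - p) = 0$, i.e.\ $\pi(a) = \pi(a)p$, for all $a\in J_X$. Combining this with $p\,b_\xi = 0$ gives $\pi(a)\,b_\xi = \pi(a)\,p\,b_\xi = 0$, which kills the $(1,2)$-corner and delivers the claimed block-diagonal form with $(1,1)$-corner $t(\vphi_X(a)\xi)$. I expect the orthogonality relation $p\,b_\xi = 0$ to be the main obstacle, as it is the only point that genuinely couples the off-diagonal part $b_\xi$ of the dilation with the block-diagonality of $\wt{\pi}$; once it is secured, covariance converts it mechanically into the desired vanishing. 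I note that this route uses covariance only of $(\pi,t)$ together with the structural Lemma \ref{L:dilation}, so that neither Lemma \ref{L:Kasparov} nor covariance of $(\wt{\pi},\wt{t})$ is actually needed, which streamlines the arguments of \cite{KR21, Kim21}.
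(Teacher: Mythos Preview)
Your argument is correct and takes a genuinely different, more elementary route than the paper. Both proofs reduce to showing that the $(1,2)$-corner $\pi(a)\,b_\xi$ vanishes, and both establish the key orthogonality $t(\eta)^* b_\xi = 0$ via the identity $\wt{t}(\eta)^*\wt{t}(\xi) = \wt{\pi}(\sca{\eta,\xi})$ and the block-diagonality of $\wt{\pi}$. The difference lies in how this orthogonality is converted into $\pi(a)\,b_\xi = 0$. The paper invokes Lemma~\ref{L:Kasparov} (Kasparov's Stabilisation) to write $\vphi_X(a) = \sum_n \theta_{x_n,x_n}$ for positive $a \in J_X$, so that Cuntz--Pimsner covariance gives $\pi(a) = \sum_n t(x_n)t(x_n)^*$ and hence $\pi(a)\,b_\xi = \sum_n t(x_n)\,t(x_n)^* b_\xi = 0$. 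You instead observe that $t(\eta)^* b_\xi = 0$ for \emph{all} $\eta$ forces $p\,b_\xi = 0$ for the projection $p$ onto $[\psi_t(\K(X))H]$, and then use the equivalent formulation of covariance $\pi(a) = \pi(a)p$ recalled earlier in the section. Your route bypasses Kasparov's Stabilisation entirely and works for all $a \in J_X$ at once without first reducing to positive elements; the paper's route is more explicit in that it exhibits the vanishing through a concrete factorisation of $\pi(a)$. As you note, neither approach uses covariance of $(\wt{\pi},\wt{t})$.
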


\begin{proof}
As $J_X$ is spanned by its positive elements, it suffices to show the statement for $a \geq 0$ in $J_X$ and $\xi \in X$.
Towards this end, pick a sequence $(x_n)_n \subseteq X$ such that 
\[
\sum_{n=1}^\infty \theta_{x_n, x_n} \vphi_X(a)^{1/2} = \vphi_X(a)^{1/2}
\]
provided by Proposition \ref{L:Kasparov}. 
Then we have
\[
\sum_{n=1}^\infty \theta_{\vphi_X(a)^{1/2}x_n, \vphi_X(a)^{1/2}x_n} = \vphi_X(a)^{1/2} \sum_{n=1}^\infty \theta_{x_n, x_n} \vphi_X(a)^{1/2} = \vphi_X(a).
\]
Hence without loss of generality, we may pick a sequence $(x_n)_n \subseteq X$ such that $\sum_{n=1}^\infty \theta_{x_n, x_n} = \vphi_X(a)$.
By using Lemma \ref{L:dilation} let us write
\[
\wt{t}(\xi) = \begin{pmatrix} t(\xi) & y \\ 0 & \ast \end{pmatrix}
\qand
\wt{t}(x_n) = \begin{pmatrix} t(x_n) & y_n \\ 0 & \ast \end{pmatrix} \foral n \in \bN.
\]
We then have 
\begin{align*}
\begin{pmatrix} t(x_n)^* t(\xi) & t(x_n)^* y \\ y_n^* t(\xi) & \ast 
\end{pmatrix} 
&= 
\wt{t}(x_n)^* \wt{t}(\xi) 
= 
\wt{\pi}(\sca{x_n,\xi})\\
&= \begin{pmatrix} 
\pi(\sca{x_n,\xi}) & 0 \\ 0 & \ast \end{pmatrix} 
=
\begin{pmatrix}
t(x_n)^* t(\xi) & 0 \\ 0 & \ast \end{pmatrix}.
\end{align*}
By equating the $(1,2)$-entries we derive that $t(x_n)^* y = 0$ for all $n \in \bN$.
We now have
\[
\wt{t}(\vphi_X(a) \xi) = \wt{\pi}(a) \wt{t}(\xi) = \begin{pmatrix} \pi(a) t(\xi) & \pi(a) y \\ 0 & \ast \end{pmatrix}.
\]
However, since $a \in J_X$ and $(\pi,t)$ is Cuntz-Pimsner covariant, we get
\[
\pi(a) y = \psi_{t}(\vphi_X(a)) y = \sum_{n=1}^\infty t(x_n) t(x_n)^* y = 0, 
\]
and the proof is complete.
\end{proof}

The next lemma is a consequence of the stabilised tensor product of Hilbert C*-modules.

\begin{lemma} \label{L:equi conv}
Let $X$ be a C*-correspondence over a C*-algebra $A$ and let $J$ be an ideal in $A$.
The following are equivalent:
\begin{enumerate}
\item $[t(\vphi_X(J)X)H] = [t(X)H]$ for any representation $(H,\pi,t)$ of $X$.
\item $[\vphi_X(J)X] \otimes_\si H = X \otimes_\si H$ for any representation $(H,\si)$ of $A$.
\end{enumerate}
\end{lemma}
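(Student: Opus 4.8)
The key idea is the standard identification between the internal tensor product of Hilbert modules and the subspace of $H$ generated by $t(X)$. Concretely, fix a representation $(H,\pi,t)$ of $X$ and recall that there is a canonical isometry
\[
U \colon X \otimes_\pi H \to H; \quad \xi \otimes h \mapsto t(\xi)h,
\]
with range $[t(X)H]$, and which satisfies $U(\vphi_X(a)\xi \otimes h) = t(\vphi_X(a)\xi)h$ for all $a \in A$. Here $X \otimes_\pi H$ denotes the Hilbert space completion of the algebraic tensor product with respect to the inner product $\langle \xi \otimes h, \eta \otimes h' \rangle = \langle \pi(\langle \xi, \eta \rangle)h, h' \rangle$; the construction goes through verbatim when the representation $(H,\si)$ of $A$ in item (ii) is taken to be $\pi$. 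Under $U$, the closed subspace $[\vphi_X(J)X] \otimes_\pi H$ is carried isometrically onto $[t(\vphi_X(J)X)H]$, and the full space $X \otimes_\pi H$ onto $[t(X)H]$. Thus for a \emph{fixed} representation the two equalities in (i) and (ii) are literally the same statement transported across $U$, and the equivalence of the quantified statements will follow once the ranges of representations of $X$ are matched against representations of $A$.

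First I would make precise the correspondence between the two families of representations. Given a representation $(H,\pi,t)$ of $X$, its restriction $(H,\pi)$ is a representation of $A$, so (ii) applied to $(H,\pi)$ yields (i) for $(H,\pi,t)$; this gives the implication [(ii) $\Rightarrow$ (i)] immediately. For the converse, given a representation $(H,\si)$ of $A$, I would exhibit a representation $(H',\pi,t)$ of $X$ whose $A$-part restricts appropriately so that the desired equality for $(H,\si)$ is read off from the equality for $(H',\pi,t)$. The natural choice is to induce: form $\F(X) \otimes_\si H$ and take the Fock-type representation of $X$ on it, with $\pi = \ol\pi \otimes_\si \id_H$ and $t = \ol t \otimes_\si \id_H$. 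Restricting this representation to the summand $X^{\otimes 0} \otimes_\si H = H$ recovers $\si$ on $A$, and the subspace $[t(X)H'] \cap (X^{\otimes 1}\otimes_\si H)$ is exactly $X \otimes_\si H$, while $[t(\vphi_X(J)X)H'] \cap (X^{\otimes 1}\otimes_\si H) = [\vphi_X(J)X]\otimes_\si H$. Since (i) forces $[t(\vphi_X(J)X)H'] = [t(X)H']$ and compression to the $X^{\otimes 1}\otimes_\si H$-summand respects these closed subspaces (because $t(X)$ maps $X^{\otimes 0}\otimes_\si H$ into $X^{\otimes 1}\otimes_\si H$), we recover $[\vphi_X(J)X]\otimes_\si H = X \otimes_\si H$, which is (ii).

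The main obstacle is bookkeeping with the internal tensor product: one must verify that $U$ is well-defined and isometric on the algebraic tensor product (this uses $t(\xi)^*t(\eta) = \pi(\langle \xi,\eta\rangle)$ in the standard way) and that it intertwines the left-module structure so that $[\vphi_X(J)X] \otimes_\pi H$ maps onto $[t(\vphi_X(J)X)H]$ rather than merely into it --- this requires the easy observation that $\vphi_X(J)X$ is a closed submodule, so that $[\vphi_X(J)X]\otimes_\pi H$ is the closed span of $\vphi_X(a)\xi \otimes h$ over $a \in J$, $\xi \in X$, $h \in H$. The other point to be careful about is that in item (ii) the hypothesis ranges over \emph{all} representations of $A$, not only non-degenerate ones, but the induced Fock construction in the converse direction produces a genuine representation of $X$ without any non-degeneracy requirement, so no extra care is needed there. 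Everything else is routine, and I would keep the write-up brief by citing the standard identification of $X\otimes_\pi H$ with $[t(X)H]$ from \cite{Lan95} or \cite{MT05}.
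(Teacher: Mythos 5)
Your proposal is correct and follows essentially the same route as the paper: the implication [(ii) $\Rightarrow$ (i)] via the canonical isometry $X \otimes_\pi H \to H$, $\xi\otimes h \mapsto t(\xi)h$, and the implication [(i) $\Rightarrow$ (ii)] via the induced Fock representation $(\ol{\pi}\otimes I, \ol{t}\otimes I)$ on $\F(X)\otimes_\si H$ followed by restriction to the degree-one summand. No substantive differences.
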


\begin{proof}
Suppose that item (i) holds. 
For a $*$-representation $(H,\si)$ of $A$, consider the representation $(\pi,t) := (\ol{\pi} \otimes I, \ol{t} \otimes I)$ on $\F(X) \otimes_\si H$. 
We then have
\begin{align*}
\sum_{n = 1}^\infty X^{\otimes n} \otimes_\si H
& =
[t(X) (\F(X) \otimes_\si H)] \\
& = 
[t(\vphi_X(J) X) (\F(X) \otimes_\si H)] \\
& =
\sum_{n = 1}^\infty [\vphi_X(J)X] \otimes_A (X^{\otimes n-1} \otimes_\si H),
\end{align*}
where we use the assumption in the second equality.
By taking the projection to the first summand we derive the required $X \otimes_\si H = [\vphi_X(J)X] \otimes_\si H$.

For the converse, let $(H, \pi, t)$ be a representation of $X$.
Since both $\pi$ and $t$ are completely contractive, there is a well-defined isometric map
\[
W \colon X \otimes_\pi H \to H; \xi \otimes h \mapsto t(\xi)h.
\] 
Therefore we obtain
\[
[t(\vphi_X(J) X)H] 
= 
[W( [\vphi_X(J)X)] \otimes_\pi H)]
=
[W( X \otimes_\pi H)]
=
[t(X) H],
\]
where we used the assumption for $(H,\pi)$ in the second equality, and the proof is complete.
\end{proof}

Let us now return to Theorem \ref{T:hyp} and make the following remark on non-degeneracy of the $*$-representations of $A$.

\begin{remark} \label{R:remnd}
By virtue of Lemma \ref{L:nondeg}, if $X$ is non-degenerate then all elements in any $\ca(\pi,t)$ are supported on $[\pi(A)H]$ for a representation $(H,\pi,t)$ of $X$.
Moreover non-degeneracy of both the left and the right action of $A$ implies
\[
[t(X)H] = [t(X) \pi(A) H]
\qand
[\psi_t(\K(X))H] = [\psi_t(\K(X)) \pi(A) H].
\]
Moreover by the properties of the stabilised tensor product and since $X = [X \cdot A]$ we have
\[
[\vphi_X(J)X] \otimes_\si H = [\vphi_X(J)X] \otimes_\si [\si(A)H]
\qand
X \otimes_\si H = X \otimes_\si [\si(A)H].
\]
Hence Theorem \ref{T:hyp} is still valid after removing the non-degeneracy clause on $(H,\pi)$ in items (iv) and (v), and on $(H,\si)$ in item (vi).
\end{remark}

We now give the proof of Theorem \ref{T:hyp}.

\begin{proof}[{\bf Proof of Theorem \ref{T:hyp}}]

\noindent
[(i) $\Leftrightarrow$ (ii)]. 
By Proposition \ref{P:unitcls} and Theorem \ref{T:hypiffuep}, Lemma \ref{L:iffmaxspace} entails that $S(A,X)$ is hyperrigid if and only if $\pi_X(A) \cup t_X(X)$ is hyperrigid.

\medskip

\noindent
[(i) $\Leftrightarrow$ (iii)]. 
In view of Lemma \ref{L:dilation}, by applying Lemma \ref{L:iffmaxalg}, and Proposition \ref{P:unitcls} and Theorem \ref{T:hypiffuep}, we obtain that $\pi_X(A) \cup t_X(X)$ is hyperrigid if and only if the algebra $(\pi_X \times t_X)(\T_X^+)$ that it generates is hyperrigid.

\medskip

\noindent
[(iii) $\Rightarrow$ (iv)]. 
This is proven in \cite[Theorem 2.7]{KR20}.
Suppose that the generating set $(\pi_X \times t_X)(\T_X^+) \subseteq \O_X$ is hyperrigid and let $(H,\pi,t)$ be a Cuntz-Pimsner covariant representation of $X$ where $\pi\times t$ is non-degenerate.
Then $\pi$ acts non-degenerately on $H$ by Lemma \ref{L:nondeg}.
For the bilateral shift $U \in \B(\ell^2(\bZ))$, consider the representation $(\wt{\pi}, \wt{t}) := (\pi \otimes I, t \otimes U)$ on $H \otimes \ell^2(\bZ)$, which in turn is a Cuntz-Pimsner covariant representation of $X$ where $\wt{\pi}\times \wt{t}$ is non-degenerate.
Set
\[
M:= [\psi_t(\K(X))H] \ominus [\pi(J_X)H] 
\qand
\wh{M} := \sum_{n = 0}^\infty [t(X)^n M] \otimes e_n \subseteq H \otimes \ell^2(\bZ).
\]
To reach a contradiction suppose that $M \neq (0)$, and define $(\wh{\pi}, \wh{t})$ on $\wh{M}$ by
\begin{align*}
& \wh{\pi} \colon A \to \B(\wh{M}); \wh{\pi}(a) = (\pi(a) \otimes I)|_{\wh{M}} \foral a \in A, \\
& \wh{t} \colon X \to \B(\wh{M}); \wh{t}(\xi) = (t(\xi) \otimes U)|_{\wh{M}} \foral \xi \in X.
\end{align*}
Then $(\wh{M}, \wh{\pi}, \wh{t})$ is a Cuntz-Pimsner covariant representation of $X$ where $\wh{\pi}\times \wh{t}$ is non-degenerate.

Indeed, by Lemma \ref{L:inv} it follows that $(\wh{M}, \wh{\pi}, \wh{t})$ is a representation of $X$. 
Next, since $X$ is non-degenerate, then $\pi(A)$ acts non-degenerately on the space $[\psi_t(\K(X))H]$.
Moreover $\pi(A)$ acts non-degenerately on $[\pi(J_X)H]$ as $J_X$ is an ideal in $A$.
Hence $[\pi(A) M] = M$.
Since $X$ is non-degenerate we also have $[\pi(A)t(X)^nH] = [t(X)^nH]$ for every $n \in \bN$; hence $\wh{\pi}(A)$ acts non-degenerately on $\wh{M}$.

For Cuntz-Pimsner covariance, let $\wh{p}$ be the orthogonal projection on the subspace
\[
[\psi_{\wh{t}}(\K(X)) \wh{M}] = \sum_{n \geq 1} [t(X)^n M] \otimes e_n.
\]
A straightforward computation for $a \in J_X$ gives
\[
\wh{\pi}(a)(I_{\wh{M}} - \wh{p}) = \wt{\pi}(a)|_{M\otimes e_0} = 0,
\]
since by definition $[\pi(J_X) M] = (0)$, and thus $(\wh{\pi},\wh{t})$ is Cuntz-Pimsner covariant.

By construction we have that $(\wt{\pi} \times \wt{t})|_{\T_X^+}$ is a dilation of $(\wh{\pi} \times \wh{t})|_{\T_X^+}$.
Since $\T_X^+$ is assumed to be hyperrigid in $\O_X$, by Theorem \ref{T:hypiffuep} we have that $\wh{M}$ is a reducing subspace of $\wt{\pi} \times \wt{t}$.
Let us write $P_{\wh{M}}$ for the projection on $\wh{M} \subseteq H \otimes \ell^2(\bZ)$, so that
\[
\wt{t}(\xi)^* P_{\wh{M}} = P_{\wh{M}} \wt{t}(\xi)^* \foral \xi \in X.
\]
In particular for every $h \in M$ and $\xi \in X$ we obtain
\begin{align*}
t(\xi)^* h \otimes e_{-1}
& =
\wt{t}(\xi)^* (h \otimes e_{0})
=
\wt{t}(\xi)^* P_{\wh{M}} (h \otimes e_{0}) \\
& = 
P_{\wh{M}} \wt{t}(\xi)^* (h \otimes e_0)
=
P_{\wh{M}} (t(\xi)^*h \otimes e_{-1})
=
0.
\end{align*}
Therefore we have $t(X)^* h = (0)$ and thus $\psi_t(k) h = 0$ for every $k \in \K(X)$.
Since we have assumed that $h \in M \subseteq [\psi_t(\K(X))H]$, considering an approximate unit of $\K(X)$ implies $h =0$, leading to the contradiction that $M = (0)$.

\medskip

\noindent
[(iv) $\Rightarrow$ (v)]. 
Let $(u_\la)_\la$ be an approximate unit of $J_X$ and let $(H,\pi,t)$ be a Cuntz-Pimsner covariant representation of $X$ where $\pi\times t$ is non-degenerate.
If item (iv) holds, then $\lim_\la \pi(u_\la) h = h$ for every $h \in [\psi_t(\K(X))H]$.
Since $\pi(J_X) \subseteq \psi_t(\K(X))$ we have
\[
[\pi(J_X) H'] = H'= [\psi_t(\K(X)) H']
\]
whenever $H'= [\psi_t(\K(X))L]$ for some subspace $L\subseteq H$.

Now consider $(H,\pi, t)$ to be just a  representation of $X$ where $\pi\times t$ is non-degenerate, equivalently $(H,\pi)$ is non-degenerate by Lemma \ref{L:nondeg}.
Since $X$ and $\pi$ are non-degenerate we either have that $\T_X^+$ and $(\pi \times t)|_{\T_X^+}$ are unital or $\T_X^+$ is not unital. Thus by \cite[Theorem 1.2]{DM05} and \cite[Proposition 2.5]{DS18} if necessary, we obtain that in both cases $(\pi \times t)|_{\T_X^+}$ admits a maximal dilation. 
Since $\cenv(\T_X^+)=\O_X$, such a maximal dilation is a Cuntz-Pimsner covariant representation $(\wt{H},\wt{\pi},\wt{t})$. 
By considering its non-degenerate compression, we may assume that $\wt{\pi}\times \wt{t}$ is non-degenerate and $(\wt{\pi} \times \wt{t})|_{\T_X^+}$ dilates $(\pi \times t)|_{\T_X^+}$.
By Lemma \ref{L:dilation} we have
\[
\left[\wt{t}(X) \begin{pmatrix} H \\ 0 \end{pmatrix}
\right]
=
\left[\begin{pmatrix} t(X) & * \\  0 & * \end{pmatrix}
\begin{pmatrix} H \\ 0 \end{pmatrix}\right]
=
\begin{pmatrix} [t(X)H] \\  0 \end{pmatrix}.
\]
Since $[\K(X) X] = X$, we have
\begin{align*}
\left[\wt{t}(X) \begin{pmatrix} H \\ 0 \end{pmatrix} \right]
& =
\left[\psi_{\wt{t}}(\K(X))\wt{t}(X) \begin{pmatrix} H \\ 0 \end{pmatrix} \right] 
=
\left[\wt{\pi}(J_X) \psi_{\wt{t}}(\K(X)) \wt{t}(X) \begin{pmatrix} H \\ 0 \end{pmatrix} \right] \\
& =
\left[\wt{\pi}(J_X) \wt{t}(X) \begin{pmatrix} H \\ 0 \end{pmatrix} \right] 
=
\left[\wt{t}(\vphi_X(J_X)X) \begin{pmatrix} H \\ 0 \end{pmatrix} \right] \\
& =
\left[\begin{pmatrix} t(\vphi_X(J_X)X) & * \\ 0 & * \end{pmatrix}
\begin{pmatrix} H \\  0 \end{pmatrix} \right]
=
\begin{pmatrix} [t(\vphi_X(J_X)X) H] \\ 0 \end{pmatrix}.
\end{align*}
We conclude $[t(X)H] = [t(\vphi_X(J_X)X)H]$, as required.

\medskip

\noindent
[(v) $\Rightarrow$ (iv)]. 
This is immediate, since $\pi(J_X) \subseteq \psi_t(\K(X))$ for every $(H,\pi,t)$ Cuntz-Pimsner covariant representation, and by using that $[\K(X) X] = X$.

\medskip

\noindent
[(iv) $\Rightarrow$ (iii)]. 
Let $(H,\pi,t)$ be a Cuntz-Pimsner covariant representation of $X$ where $\pi\times t$ is non-degenerate.
By Theorem \ref{T:hypiffuep} we have to show that if  $(\wt{H}, \wt{\pi}, \wt{t})$ is a Cuntz-Pimsner covariant representation such that $(\wt{\pi} \times \wt{t})|_{\T_X^+}$ dilates $(\pi \times t)|_{\T_X^+}$, then $H$ is reducing for $\wt{\pi} \times \wt{t}$.
By passing to the non-degenerate compression we may assume that $\wt{\pi} \times \wt{t}$ is non-degenerate.
By Lemma \ref{L:dilation} and Proposition \ref{P:J part} we have that $\pi(A)$ and $t(\vphi_X(J_X)X)$ are direct summands of $\wt{\pi}(A)$ and $\wt{t}(\vphi_X(J_X)X)$, respectively.
Let $(u_\la)_\la$ be an approximate unit of $J_X$.
By assumption we have
\[
\textup{wot-}\lim_\la \pi(u_\la) = P_{[\psi_t(\K(X))H]}.
\]
Hence for any $\xi \in X$ we have
\[
t(\xi) = P_{[\psi_t(\K(X))H]} t(\xi) = \textup{wot-}\lim_\la \pi(u_\la) t(\xi) = \textup{wot-}\lim_\la t(\vphi_X(u_\la) \xi),
\]
i.e., $t(X)$ is in the wot-closure of $[t(\vphi_X(J_X)X)]$.
Likewise we have that $\wt{t}(X)$ is in the wot-closure of $[\wt{t}(\vphi_X(J_X)X)]$.
Hence by Proposition \ref{P:a-max} we have that $t(X)$ is a direct summand of $\wt{t}(X)$ as well, and thus $H$ is reducing for $\wt{\pi} \times \wt{t}$ as required.

\medskip

\noindent
[(v) $\Leftrightarrow$ (vi)]. 
This follows from Lemma \ref{L:equi conv}.

\medskip

\noindent
[(iv) $\Leftrightarrow$ (vii)]. 
This follows from Proposition \ref{P:multiplier} since
\[
\pi_X(J_X) = \psi_{t_X}(\vphi_X(J_X)) \subseteq \psi_{t_X}(\K(X)) \subseteq \O_X,
\]
for the universal $*$-representation $(\pi_X, t_X)$ of $\O_X$.

\medskip

\noindent
[(vii) $\Leftrightarrow$ (viii)]. 
This is immediate by the TRO property $[\K(X) X] = X$.

\medskip

\noindent
[(viii) $\Leftrightarrow$ (ix)]. 
This follows by an application of the Hewitt-Cohen Factorisation Theorem.

\medskip

\noindent
[(ii) $\Leftrightarrow$ (ix)]. 
This is \cite[Theorem 3.5]{Kim21}.
Alternatively it follows from [(iii) $\Leftrightarrow$ (ix)] since item (ii) is now shown to be equivalent to item (iii).
\end{proof}

\begin{remark} \label{R:septensor}
Let $X$ be a C*-correspondence over a C*-algebra $A$. A review of the arguments of the proof of Theorem \ref{T:hyp}, and using Theorem \ref{T:hypiffuep}, yield that the following are equivalent:
\begin{enumerate}
\item Every Cuntz-Pimsner representation $(H,\pi,t)$ is maximal on $\T_X^+$.
\item $\pi_X\times t_X(\T_X^+)\subseteq \O_X$ is hyperrigid and separating.
\item $\phi_X(J_X)X=X$.
\end{enumerate}
In particular, if any of the above is satisfied then $X$ is non-degenerate.
\end{remark}

\begin{remark}\label{R:bb}
In a recent work Bilich \cite{Bil24} recovers Kim's characterisation \cite[Corollary 4.9]{Bil24} by using a characterisation of maximality for Cuntz-Pimsner representations \cite[Theorem 4.4]{Bil24}. In \cite[Definition 2.6]{Bil24}, a non-degenerate C*-correspondence is said to be hyperrigid if all its Cuntz-Pimsner covariant representations are maximal. Since the tensor algebra of a non-degenerate C*-correspondence is separating, this definition aligns with our results. Specifically, Theorem \ref{T:hypiffuep} establishes that for separating generating sets, hyperrigidity is equivalent to maximality for all $*$-representations.
\end{remark}

\end{document}